\newcommand{\rt}{\rightarrow}
\newcommand{\lrt}{\longrightarrow}
\newcommand{\st}{\stackrel}
\newcommand{\la}{\lambda}
\newcommand{\La}{\Lambda}
\newcommand{\lan}{\langle}
\newcommand{\ran}{\rangle}
\newcommand{\bu}{\bullet}
\newcommand{\D}{\mathbb{D} }
\newcommand{\K}{\mathbb{K} }
\newcommand{\N}{\mathbb{N} }
\newcommand{\Z}{\mathbb{Z} }
\newcommand{\CA}{\mathcal{A} }
\newcommand{\CE}{\mathcal{E}}
\newcommand{\CF}{\mathcal{F} }
\newcommand{\CI}{\mathcal{I} }
\newcommand{\CK}{\mathcal{K} }
\newcommand{\CL}{\mathcal{L} }
\newcommand{\CM}{\mathcal{M} }
\newcommand{\CN}{\mathcal{N} }
\newcommand{\CP}{\mathcal{P} }
\newcommand{\CS}{\mathcal{S} }
\newcommand{\CT}{\mathcal{T} }
\newcommand{\CX}{\mathcal{X} }
\newcommand{\CV}{\mathcal{V}}
\newcommand{\CU}{\mathcal{U}}
\newcommand{\CB}{\mathcal{B} }
\newcommand{\Mod}{{\rm{Mod\mbox{-}}}}
\newcommand{\mmod}{{\rm{{mod\mbox{-}}}}}
\newcommand{\op}{{\rm{op}}}
\newcommand{\Ker}{{\rm{Ker}}}
\newcommand{\Hom}{{\rm{Hom}}}
\newcommand{\Ext}{{\rm{Ext}}}
\newcommand{\End}{{\rm{End}}}
\theoremstyle{plain}
\newtheorem{theorem}{Theorem}[section]
\newtheorem{corollary}[theorem]{Corollary}
\newtheorem{lemma}[theorem]{Lemma}
\newtheorem{proposition}[theorem]{Proposition}
\theoremstyle{definition}
\newtheorem{definition}[theorem]{Definition}
\newtheorem{example}[theorem]{Example}
\newtheorem{construction}[theorem]{Construction}
\newtheorem{remark}[theorem]{Remark}
\theoremstyle{plain}
\theoremstyle{definition}
\numberwithin{equation}{section}
\begin{document}

\title[different  exact structures  on the monomorphism categories]{different exact structures  on the monomorphism categories}

\author[Rasool Hafezi and Intan Muchtadi-Alamsyah ]{Rasool Hafezi and Intan Muchtadi-Alamsyah}

\address{School of Mathematics, Institute for Research in Fundamental Sciences (IPM), P.O.Box: 19395-5746, Tehran, Iran}
\email{hafezi@ipm.ir}
\address{Algebra Research Group, Faculty of Mathematics and Natural Sciences, Institut Teknologi Bandung, Jalan Ganesha no.10, Bandung, 40132, Indonesia}
\email{ntan@math.itb.ac.id}
\subjclass[2010]{18E10, 18E30, 16G10, 18A25}

\keywords{exact catgeries, monomorphism catgery, functor category, bounded derived catgery and singularity catgery}


\begin{abstract}
Let $\CX$ be a resolving and contravariantly finite subcategory of $\mmod \La$, the category of finitely generated right $\La$-modules. We associate to $\CX$ the subcategory $\CS_{\CX}(\La)$ of the morphism category $\rm{H}(\La)$ consisting of all monomorphisms $(A\st{f}\rt B)$ with $A, B$ and $\rm{Cok} \ f$ in $\CX$. Since $\CS_{\CX}(\La)$ is closed under extensions then it inherits naturally an exact structure from $\rm{H}(\La)$. We will define two other different exact structures else than the canonical one on $\CS_{\CX}(\La)$, and the indecomposable  projective (resp. injective) objects in the corresponding exact categories completely classified. Enhancing $\CS_{\CX}(\La)$ with the new exact structure  provides a framework to construct a triangle functor. Let $\mmod \underline{\CX}$ denote the category of finitely presented functors over the stable category $\underline{\CX}$. We then use the triangle functor to show  a triangle equivalence between the bounded derived category $\mathbb{D}^{\rm{b}}(\mmod \underline{\CX})$ and a Verdier quotient of the bounded derived category of the associated exact category on $\CS_{\CX}(\La)$. Similar consideration is also given for the singularity category  of $\mmod \underline{\CX}$.

\end{abstract}

\maketitle
\section{Introduction}

Let $\La$ be an Artin algebra, and $\mmod \La$ the category of finitely generated right $\La$-modules. The monomorphism (or submodule) category $\CS(\La)$ of $\La$ has as objects all monomorphisms $f$ in $\mmod \La$ and morphisms are given by the commutativity. The representation theory of  submodule categories has been studied intensively by Ringel and Schmidmeier \cite{RS1, RS2}. We can consider $\CS(\La)$ as a subcategory of the morphism category $\rm{H}(\La)$ of $\mmod \La$, that is an abelian category with all morphisms in $\mmod \La$ as whose objects.  For positive integer $n$, let $\La_n=k[x]/(x^n)$, where $k[x]$   is the polynomial ring in one variable $x$ with coefficients in a filed $k$. Let also $\Pi_n$ denote the preprojective algebra of type $\mathbb{A}_n$. The  two functors $F, G: \CS(\La_n)\rt \mmod \Pi_{n-1} $, which one of them has been constructed originally by Auslander and Reiten \cite{AR2}, the other one recently by Li and Zhang \cite{LZ}, were studied in \cite{RZ}.  They were used to show that $\mmod \Pi_{n-1}$ can be obtained from  $\CS(\La_n)$ by factoring out an ideal $\CI$ which is generated by $2n$ indecomposable monomorphisms in the form of $(X\st{1}\rt X)$ and $(0\rt X)$, where $X$
 belongs to the set of all non-isomorphic indecomposable $\La_n$-modules. 
We know that $\La_n$ is a self-injective algebra  of finite representation type and also the stable Auslander algebra of $\La_n$ is isomorphic to $\Pi_{n-1}$ as algebras. Inspired of this observation, these two functors were extended by Eiriksson \cite{E} from the monomorphim category $\CS(\La)$ of a self-injective  algebra  $\La$ of finite representation type  to the module category of the stable Auslander algebra of $\La$, and similar equivalences were proved. We mention recently in \cite{L} a generalization of those results is given. There is another generalization of such results which was given by the first named author \cite{H2} as follows: Let $\CX$ be a subcategory of $\mmod \La$, and let $\CS_{\CX}(\La)$ denote the subcategory of $\CS(\La)$ consisting of all those monomorphisms $(A\st{f}\rt B)$ such that all modules $A, B$ and $\rm{Cok} \ f$ belong to $\CX$. Assume $\CX$ is quasi-resolving, see Subsection \ref{An equivalence} for the definition.   Motivated by one of the two functors worked in \cite{E} and \cite{RZ}, in the setting of functors category, the functor $\Psi_{\CX}$ is defined from $\CS_{\CX}(\La)$ to the category $\mmod \underline{\CX}$ of finitely presented functors over $\underline{\CX}$. The functor $\Psi_{\CX}$ induces a similar equivalence to the one in \cite{E} and \cite{RZ}, in addition, it is proved in \cite[Section 5]{H2} with some further conditions on $\CX$, the functor $\Psi_{\CX}$ makes a nice connection between the Auslander-Reiten theory of $\CS_{\CX}(\La)$ and that of $\mmod \underline{\CX}$. If we assume $\CX$ is resolving, i.e., a quasi-resolving subcategory which is closed under extensions, then the subcategory $\CS_{\CX}(\La)$ so is a resolving subcategory in $\rm{H}(\La)$. Hence, it inherits an exact structure from $\rm{H}(\La)$. On the other hand, we know $\mmod \underline{\CX}$ is an abelian category due to \cite[Proposition 2.11]{MT}. The natural question may arise here whether the functor $\Psi_{\CX}$ is an exact functor with  respect to the above-mentioned exact structures for $\CS_{\CX}(\La)$ and $\mmod \underline{\CX}$. Unfortunately, by considering the  canonical exact structure on $\CS_{\CX}(\La)$, it  is not true the functor $\Psi_{\CX}$ to be exact in general. To resolve this fault, we will introduce a new  exact structure on $\CS_{\CX}(\La)$, denote by $\CS^{\rm{scw}}_{\CX}(\La)$ the new exact category,  which ensures that the functor $\Psi_{\CX}$ is always exact from $\CS^{\rm{scw}}_{\CX}(\La)$ to the  abelian  category $\mmod \underline{\CX}$. The exact functor $\Psi_{\CX}:\CS^{\rm{scw}}_{\CX}(\La)\rt \mmod \underline{\CX}$ establishes a triangle functor form the bounded derived category  $\D^{\rm{b}}(\CS^{\rm{scw}}_{\CX}(\La))$ of bounded complexes over the exact category $\CS^{\rm{scw}}_{\CX}(\La)$ to the bounded derived category $\D^{\rm{b}}(\mmod \underline{\CX})$.  We will use the  induced triangle functor to prove the following theorem.

\begin{theorem}[Theorem \ref{Theorem 6.5} and \ref{Theorem 6.6}]\label{Theorem 1.1}
	Let $\CX$ be a resolving and contravariantly finite subcategory $\CX$ of $\mmod \La.$ Let $\CU$ denote the smallest thick subcategory of $\D^{\rm{b}}(\CS^{\rm{scw}}_{\CX}(\La))$ containing all complexes concentrated in degree zero with terms of the forms $(X\st{1}\rt X)$ and $(0\rt X)$, where $X$ runs through all objects in $\CX$. Then there exists the following equivalences of triangulated categories
$$\D^{\rm{b}}(\CS^{\rm{scw}}_{\CX}(\La))/\CU \simeq \D^b(\mmod \underline{\CX}).$$	 
$$\D_{\rm{sg}}(\CS^{\rm{scw}}_{\CX}(\La))\simeq \D_{\rm{sg}}(\mmod \underline{\CX}).$$ 
\end{theorem}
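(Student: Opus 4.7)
The plan is to leverage the exact functor $\Psi_{\CX}:\CS^{\rm{scw}}_{\CX}(\La)\rt \mmod \underline{\CX}$, which is exact by the design of the scw structure and which, on the underlying additive categories, induces the equivalence $\CS_{\CX}(\La)/\I \simeq \mmod \underline{\CX}$ recalled in the introduction. Applying $\Psi_{\CX}$ termwise gives a triangle functor $\overline{\Psi}:\D^{\rm{b}}(\CS^{\rm{scw}}_{\CX}(\La))\rt \D^{\rm{b}}(\mmod \underline{\CX})$. The generators of $\CU$, the stalk complexes $(X\st{1}\rt X)$ and $(0\rt X)$, are precisely the objects generating the ideal $\I$ collapsed by $\Psi_{\CX}$; consequently $\CU \subseteq \ker \overline{\Psi}$, and the universal property of the Verdier quotient produces an induced triangle functor $\widetilde{\Psi}:\D^{\rm{b}}(\CS^{\rm{scw}}_{\CX}(\La))/\CU \rt \D^{\rm{b}}(\mmod \underline{\CX})$. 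Essential surjectivity follows from the additive equivalence: any bounded complex in $\mmod \underline{\CX}$ lifts termwise to $\CS^{\rm{scw}}_{\CX}(\La)$, with differentials lifted by fullness modulo $\I$ and their failure to compose corrected up to quasi-isomorphism by objects in $\CU$. Contravariant finiteness of $\CX$ lets one arrange these lifts inside $\CS^{\rm{scw}}_{\CX}(\La)$.

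The main obstacle is full faithfulness of $\widetilde{\Psi}$. The strategy is to compute, for $\X, \Y \in \CS^{\rm{scw}}_{\CX}(\La)$,
\[ \Hom_{\D^{\rm{b}}(\CS^{\rm{scw}}_{\CX}(\La))/\CU}(\X, \Y[n]) \quad \text{and} \quad \Hom_{\D^{\rm{b}}(\mmod \underline{\CX})}(\Psi_{\CX}\X, \Psi_{\CX}\Y[n]) \]
by parallel resolutions: on the right, a projective resolution by representables $\underline{\Hom}(-, X)$; on the left, a scw-projective resolution, using the classification of scw-projectives established earlier in the paper. One then verifies that $\Psi_{\CX}$ carries a scw-projective resolution of $\X$ to a projective resolution of $\Psi_{\CX}(\X)$, so that both Hom-complexes compute the same $\Ext$-groups after identifying Hom-groups via $\CS_{\CX}(\La)/\I \simeq \mmod \underline{\CX}$. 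The delicate point is that the scw-resolutions must be genuine scw-conflations, not merely exact sequences in the canonical structure of $\CS_{\CX}(\La)$; this is exactly what the restrictive scw exact structure was designed to guarantee.

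For the singularity category statement, the triangle equivalence $\widetilde{\Psi}$ identifies the image of $\perf(\CS^{\rm{scw}}_{\CX}(\La))$ in the Verdier quotient with $\perf(\mmod \underline{\CX})$, because the scw-projectives are carried by $\Psi_{\CX}$ either to representable (hence projective) functors in $\mmod \underline{\CX}$ or to objects already trivialized in $\D^{\rm{b}}(\CS^{\rm{scw}}_{\CX}(\La))/\CU$. Taking the further Verdier quotient by $\perf$ on both sides and using $\D_{\rm{sg}} = \D^{\rm{b}}/\perf$ yields $\D_{\rm{sg}}(\CS^{\rm{scw}}_{\CX}(\La)) \simeq \D_{\rm{sg}}(\mmod \underline{\CX})$.
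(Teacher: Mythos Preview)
Your overall architecture matches the paper's: both use the exact functor $\Psi_{\CX}$ to induce a triangle functor $\D^{\rm b}\Psi_{\CX}:\D^{\rm b}(\CS^{\rm scw}_{\CX}(\La))\to\D^{\rm b}(\mmod\underline{\CX})$, and the singularity-category argument (identify the image of the perfect subcategory, take a further Verdier quotient) is essentially the paper's. The divergence, and the gap, is in how the first equivalence is obtained.

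The paper does \emph{not} pass to the Verdier quotient by $\CU$ first and then attempt to prove full faithfulness of the induced functor. Instead it proves that the \emph{unquotiented} functor $\D^{\rm b}\Psi_{\CX}$ is full and dense: fullness is obtained by showing surjectivity on Yoneda $n$-extensions via a horseshoe construction (the key lemma), then extending to arbitrary bounded complexes by induction on length using triangles and the Five Lemma; denseness follows from fullness by the same induction. The general principle then yields an equivalence $\D^{\rm b}(\CS^{\rm scw}_{\CX}(\La))/\Ker(\D^{\rm b}\Psi_{\CX})\simeq\D^{\rm b}(\mmod\underline{\CX})$, and the remaining work --- the longest part of the proof --- is to show that $\Ker(\D^{\rm b}\Psi_{\CX})=\CU$, by an explicit inductive decomposition of any complex in the kernel into stalk complexes of the two generating shapes.

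Your route instead asserts directly that $\widetilde{\Psi}$ is fully faithful by ``parallel resolutions'', computing $\Hom_{\D^{\rm b}(\CS^{\rm scw}_{\CX}(\La))/\CU}(\X,\Y[n])$ via a scw-projective resolution of $\X$. This step is not justified. Morphisms in a Verdier quotient are computed by calculus of fractions, not by Hom-complexes of resolutions taken in the ambient category; to make your idea work you would need the resolving objects to be $\CU$-local (i.e.\ to have no maps, in either direction, into $\CU$ in $\D^{\rm b}$). But the scw-projectives you classify include $(X\stackrel{1}{\to}X)$ and $(0\to X)$, which generate $\CU$ itself, so a scw-projective resolution does not a priori compute $\Hom$ in the quotient by $\CU$. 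Invoking the additive equivalence $\CS_{\CX}(\La)/\CI\simeq\mmod\underline{\CX}$ does not repair this: that identifies Hom modulo the \emph{ideal} $\CI$, which is a different operation from Hom in the \emph{Verdier quotient} by the thick subcategory $\CU$. Your essential-surjectivity sketch has the same flavour of gap: ``lift differentials by fullness modulo $\CI$ and correct up to quasi-isomorphism by objects in $\CU$'' is precisely what requires proof, and is what the paper's inductive triangle argument supplies. In short, the missing idea is the identification $\Ker(\D^{\rm b}\Psi_{\CX})=\CU$; the paper isolates this as a separate (and nontrivial) step, whereas your proposal presupposes it.
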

We recall that the singularity category of an  exact category $\mathcal{C}$ is by definition the Verdier quotient
$\D_{\rm{sg}}(\mathcal{C})=\frac{\D^{\rm{b}}(\mathcal{C})}{\K^{\rm{b}}(\CP)}$, where $\K^{\rm{b}}(\CP)$ is the homotopy category of bonded complexes of projective objects in the exact category $\mathcal{C}$.\\

The above theorem can be thought of as a derived and singularity version of the equivalence given in \cite{E} and \cite{RZ}.\\
In addition, we will define another exact structure on $\CS_{\CX}(\La)$ which gives us an exact category, denote by $\CS^{\rm{cw}}_{\CX}(\La)$, in view of homological property,  is very close to the hereditary algebras.
 Enhancing the additive category $\CS_{\CX}(\La)$ with different exact structures allows us to use better the existing theorems in the setting of exact categories (or more general extriangulated categories recently defined in \cite{NP})  to get some new results for $\CS_{\CX}(\La)$, as we will do in Section 5.\\
 The paper is organized as follows. In Section 2, for later use we state some backgrounds  on the functors category, exact categories and recalling some results from \cite{H2}. Section 3 is a central section of our paper. In this section, we will introduce three different types of exact structures on $\CS_{\CX}(\La)$. Also, projective indecomposable objects and injective indecomposable objects in the relevant exact categories completely classified. In Section 4, we will revisit the functor $\Psi_{\CX}$ relative to the exact structures defined on $\CS_{\CX}(\La)$, and some new equivalences in the level of the stable categories are proved. In Section 5, we will show $\CS_{\CX}(\La)$ has almost split sequences which is, indeed, an application of our consideration to define other exact structures on $\CS_{\CX}(\La)$ different from the canonical one. In Section 6, Theorem \ref{Theorem 1.1}
is proved, and finally an interesting application for singular equivalences of Morita type is given.

{\bf Conversion.}
Throughout this paper, unless otherwise specified, we use the following convention: All subcategories are assumed to be strictly full (i.e., full and closed under isomorphism). $\La$ denotes an Artin algebra over a commutative artinian ring $k$. By $\mmod \La$, resp. $\rm{prj}\mbox{-}\La$,  denotes the category of right finitely generated, resp. projective,  $\La$-modules. All $\La$-modules in this paper are assumed to be finitely generated. By module we mean an $\La$-module. We denote by $D$ the ordinary duality between $\mmod \La$ and $\mmod \La^{\rm{op}}$.

\section{Preliminaries}
\subsection{Functors category}
Let $\mathcal{C}$ be an additive category.  We denote by $\Hom_{\mathcal{C}}(X, Y)$ the set of morphisms from $X$ to $Y.$  For special case $\mathcal{C}=\mmod \La$, we use as usual  $\Hom_{\La}(X, Y)$ in stead of $\Hom_{\mmod \La}(X, Y)$.
An (right) $\mathcal{C}$-module is a contravariant additive functor from $\mathcal{C}$ to the category of abelian groups. If we assume that $\mathcal{C}$ is an essentially small category, then we can define an abelian category $\Mod\mathcal{C}$ of $\mathcal{C}$-modules and natural transformations as morphisms between them.   We call a $\mathcal{C}$-module $F$ \emph{finitely presented} if there exists an exact sequence $\Hom_{\mathcal{C}}(-, X) \st{f} \rt \Hom_{\mathcal{C}}(-, Y) \rt F \rt 0.$ 
We denote by $\mmod \mathcal{C}$ the category of finitely presented $\mathcal{C}$-modules. The {\it Yoneda functor}, defined by sending an objects $C$ in $\mathcal{C}$ to the representable functor $\Hom_{\mathcal{C}}(-, C)$, establishes a fully faithful functor from $\mathcal{C}$ to $\mmod \mathcal{C}$. If $\mathcal{C}$ has split idempotents, the essential image of the Yoneda functor consists of exactly the projective objects in $\mmod \mathcal{C}$. We recall $\mathcal{C}$ has {split idempotents} if every idempotent endomorphism $e=e^2$ of an object $C$ in $\mathcal{C}$ splits, that is, there exists a factorization $C \st{a}\rt B \st{b}\rt C$ of $e$ with $a\circ b=1_B.$ Let $\CX$ be a subcategory of $\mmod \La$ containing $\rm{prj}\mbox{-}\La.$ In this paper, we mostly are dealing with $\mmod \CX$ and $\mmod \underline{\CX}$. Here $\underline{\CX}$ denotes  the stable category of $\mathcal{X}$. The objects of $\underline{\mathcal{X}}$ are the same as the objects of $\mathcal{X}$, which we usually denote by  $\underline{X}$ when an object $X \in \mathcal{X}$ considered as an object in the stable category $\underline{\CX}$, and  the morphisms are given by $\Hom_{\underline{\mathcal{X}}}(\underline{X}, \underline{Y})= \Hom_{\La}(X, Y)/ \CP(X, Y)$, or simply $\underline{\rm{mod}}_{\La}(X, Y)$, where $\CP(X, Y)$ is the subgroup of $\Hom_{\La}(X, Y)$  consisting of those morphisms from $X $ to $Y$ in $\mmod \La$ which factor through a projective object in $\rm{prj}\mbox{-}\La.$ In order to simplify, we will use $(-, X)$, resp. $(-, \underline{X})$, to present the representable functor $\Hom_{\mathcal{X}}(-, X)$, that is only the restriction of the representable functor $\Hom_{\La}(-, X)$ in $\mmod \mmod \La$ on $\CX$,  resp. $\Hom_{\underline{\mathcal{X}}}(-, \underline{X})$, that is similarly the restriction of the representable functor $\underline{\Hom}_{\La}(-, \underline{X})$ in $\mmod \underline{\rm{mod}}\mbox{-}\La$ on $\underline{\CX}$,    in $\mmod \mathcal{X}$, resp. $\mmod \underline{\mathcal{X}}.$ It is well-known that the canonical functor $\pi: \mathcal{X}\rt \underline{\mathcal{X}}$ induces a fully faithful functor  functor $\pi^*:\mmod \underline{\mathcal{X}}\rt \mmod \mathcal{X}$. Hence due to this embedding we can  identify the functors in $\mmod \underline{\mathcal{X}}$ as functors in $\mmod \mathcal{X}$ vanishing on $\rm{prj}\mbox{-}\La.$ In general, $\mmod \CX$ or $\mmod \underline{\CX}$ may not be an abelian category. But, the case for $\mmod \CX$ happens whenever $\CX$ is {\it contravariantly finite}. We say that $\CX$ is contravariantly finite, if for any module $M$ in $\mmod \La$ there is morphism $f:X\rt M$ with $X$ in $\CX$ such that the induced morphism $(-, f)\mid_{\CX}:\Hom_{\La}(-, X)\mid_{\CX}\rt \Hom_{\La}(-, M)\mid_{\CX}$ is onto in $\mmod \CX.$ The symbol ``$\mid_{\CX}$'' is used to show the functor in $\mmod \CX$ obtained by the restriction on $\CX.$ Dually, one can define the notion of {\it covariantly finite} subcategory. A subcategory is called {\it functorialy finite} if it is both contravariantly finite and covariantly finite. A sufficient condition on $\CX$ to force $\mmod \underline{\CX}$
to be an abelian category is so-called quasi-resolving. We refer the reader to the beginning of Subsection \ref{An equivalence} for the definition of quasi-resolving subcategory. If we assume $\mmod \CX$ is an abelian category, then it has split idempotents. In addition, since the endomorphism of every functor in $\mmod \CX$ is semi-perfect, then in this case  $\mmod \CX$ due to \cite[Corollary 4.4]{K2} is a Krull-Schmidt category. Then any functor in $\mmod \CX$ has projective cover, and so we can talk about the existence of minimal projective resolution in $\mmod \CX.$ Similar situation will happen if it is assumed $\mmod \underline{\CX}$ to be an abelian category. 
\subsection{Exact categories}
We recall from \cite{B} the definition  of Quillen exact categories .
Let $\mathcal{C}$ be an additive category. A {\em kernel-cokernel pair} $(i,p)$ in $\mathcal{C}$ is a pair of composable morphisms $X\xrightarrow{i} Y\xrightarrow{p} Z$ such that $i$ is a kernel of $p$ and $p$ is a cokernel of $i$. Assume that $\mathcal{E}$ is a class of kernel-cokernel pairs. A kernel-cokernel pair $(i,p)$ in $\mathcal{E}$ is called a {\em short exact sequence}, or {\it conflation} in some context, in $\mathcal{E}$, which is denoted by $0\rightarrow X\xrightarrow{i} Y\xrightarrow{p} Z\rightarrow 0$.   A morphism $p:Y\rightarrow Z$ is called {\em admissible epimorphism}, or {\it deflation}, also called closed under extensions,  if there exists a morphism $i:X\rightarrow Y$ such that $(i,p)\in\mathcal{E}$. {\em Admissible monomorphisms}, or {\it inflations}, are defined dually.

A class of  kernel-cokernel pairs $\mathcal{E}$ is called an {\em exact structure} of $\mathcal{C}$ if $\mathcal{E}$ is closed under isomorphisms and satisfies the following axioms:

(E0) Identity morphisms are admissible epimorphisms.

(E0)$^{\textup{\tiny {op}}}$ Identity morphisms are  admissible monomorphisms.

(E1) The composition of two admissible epimorphisms  is an admissible epimorphism.

(E1)$^{\textup{\tiny {op}}}$ The composition of two admissible monomorphisms is an admissible monomorphism.

(E2) Given a short exact sequence  $0\rightarrow X\xrightarrow{i} Y\xrightarrow{p} Z\rightarrow 0$ in $\mathcal{E}$ and a morphisms $\varphi:X\rightarrow X'$ in $\mathcal{C}$, there exists a commutative diagram
$$\xymatrix
{0\ar[r] & X\ar[r]^{i}\ar[d]^{\varphi} & Y\ar[r]^{p}\ar[d]^{\varphi'} & Z\ar[r] \ar@{=}[d] & 0\\
	0\ar[r] & X'\ar[r]^{i'} & Y' \ar[r]^{p'} & Z\ar[r] & 0
}$$
such that the second row belongs to $\mathcal{E}$. In this case, $(0\rightarrow X\xrightarrow{\left[                                                                                 \begin{smallmatrix}
	i \\
	\varphi \\
	\end{smallmatrix}
	\right]
} Y\oplus X'\xrightarrow{[\varphi',-i']} Y'\rightarrow 0)\in\mathcal{E}$.

(E2)$^{\textup{\tiny {op}}}$  Given a short exact sequence  $0\rightarrow X\xrightarrow{i} Y\xrightarrow{p} Z\rightarrow 0$ in $\mathcal{E}$ and a morphisms $\phi: Z'\rightarrow Z$ in $\mathcal{C}$, there exists a commutative diagram
$$ \xymatrix
{0\ar[r] & X\ar[r]^{i'}\ar@{=}[d] & Y'\ar[r]^{p'} \ar[d]^{\phi'} & Z'\ar[r] \ar[d]^{\phi} & 0\\
	0\ar[r] & X\ar[r]^{i} & Y \ar[r]^{p} & Z\ar[r] & 0
}$$
such that the first row belongs to $\mathcal{E}$. In this case, $(0\rightarrow Y'\xrightarrow{\left[
	\begin{smallmatrix}
	p' \\
	\phi' \\
	\end{smallmatrix}
	\right]
} Z'\oplus Y\xrightarrow{[\phi,-p]} Z\rightarrow 0)\in\mathcal{E}$.

An {\em exact category} is an additive category $\mathcal{C}$ admits an exact structure $\mathcal{E}$, which is denoted by $(\mathcal{C},\mathcal{E})$. If there is no ambiguity, we only write $\mathcal{C}$.

For example, an additive category $\mathcal{C}$ is an exact category with respect to the class of split short exact sequences, which are isomorphic to $0\rightarrow X\xrightarrow{\left[
	\begin{smallmatrix}
	1 \\
	0 \\
	\end{smallmatrix}
	\right]
} X\oplus Y\xrightarrow{[0,1]} Y\rightarrow 0$ for some $X,Y\in\mathcal{C}$. An abelian category $\mathcal{C}$ is an exact category where the exact structure is given by all the kernel-cokernel pairs in $\mathcal{A}$. If $\CX$ is an extension-closed  subcategory of an abelian $\mathcal{A}$, then $\CX$ naturally inherits an exact structure from $\CA$ by taking  all the short  exact  sequences such that  all terms lie in $\CX$. We recall that $\CX$ is extension-closed, if for each short exact sequence $0 \rt X \rt Y \rt Z\rt 0$ in $\mathcal{A}$ with $X$ and $Z$ in $\CX$ the middle term $Y$ also belongs to $\CX.$
\begin{lemma}\label{Lemma2.1} Let $(\mathcal{C}, \CE)$ be an exact category. If $\CE'$ be a subclass of $\CE$ contains the identity morphisms and closed under isomorphisms and  operations given in axioms $(\rm{E}_1), (\rm{E}_2)$, resp.  $(\rm{E}_1)^{\op}, (\rm{E}_2)^{\op}$, then $(\mathcal{C}, \CE')$ is an exact category.	
	\end{lemma}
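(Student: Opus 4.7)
The plan is to verify, one by one, the six axioms $(\rm{E}_0), (\rm{E}_0)^{\op}, (\rm{E}_1), (\rm{E}_1)^{\op}, (\rm{E}_2), (\rm{E}_2)^{\op}$ for the pair $(\mathcal{C}, \mathcal{E}')$. First I would note the key preliminary point: since $\mathcal{E}' \subseteq \mathcal{E}$, every element of $\mathcal{E}'$ is already a kernel-cokernel pair in $\mathcal{C}$, so there is nothing to check regarding the underlying data. Thus only the six structural axioms remain.

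For $(\rm{E}_0)$ and $(\rm{E}_0)^{\op}$ I would appeal directly to the hypothesis that $\mathcal{E}'$ contains the identity morphisms, interpreting this (as is standard) to mean that the trivial split conflations of the form $0 \rt 0 \rt X \st{1}\rt X \rt 0$ and $0 \rt X \st{1}\rt X \rt 0 \rt 0$ lie in $\mathcal{E}'$ for every object $X \in \mathcal{C}$. This immediately gives that each identity is both an admissible epimorphism and an admissible monomorphism with respect to $\mathcal{E}'$.

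For $(\rm{E}_1)$, $(\rm{E}_1)^{\op}$, $(\rm{E}_2)$, and $(\rm{E}_2)^{\op}$, the verification is essentially a matter of reading off the hypothesis. Because $(\mathcal{C}, \mathcal{E})$ is already an exact category, for any pair of admissible epimorphisms (or monomorphisms, or for any pushout/pullback setup) arising from elements of $\mathcal{E}' \subseteq \mathcal{E}$, the resulting composition (resp.\ conflation in the new diagram) exists as a short exact sequence in $\mathcal{E}$. The hypothesis that $\mathcal{E}'$ is closed under the operations appearing in $(\rm{E}_1), (\rm{E}_2)$ and $(\rm{E}_1)^{\op}, (\rm{E}_2)^{\op}$ then places these newly constructed sequences back inside $\mathcal{E}'$. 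Closure of $\mathcal{E}'$ under isomorphism is used so that the representative short exact sequences constructed by the axioms (which are only determined up to isomorphism) actually land in $\mathcal{E}'$.

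I do not expect any genuine obstacle here: the lemma is essentially a tautology once the axioms and hypotheses are carefully compared. The only potentially delicate point is making sure that the interpretation of ``contains the identity morphisms'' is pinned down precisely (as conflations, not as bare morphisms), and that the ``In this case'' clauses inside axioms $(\rm{E}_2)$ and $(\rm{E}_2)^{\op}$ are also absorbed into what is meant by ``operations'' in those axioms. Once these interpretive conventions are fixed, the proof consists of a direct six-item checklist with no computation.
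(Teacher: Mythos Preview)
Your proposal is correct and follows essentially the same approach as the paper: the paper's proof simply unpacks what ``closed under the operations given in axioms $(\rm{E}_1), (\rm{E}_2), (\rm{E}_1)^{\op}, (\rm{E}_2)^{\op}$'' means and then observes that these assumptions immediately yield the axioms for $\CE'$. Your write-up is in fact slightly more explicit than the paper's, spelling out $(\rm{E}_0)$ and $(\rm{E}_0)^{\op}$ and the role of closure under isomorphism, but the underlying argument is identical.
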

\begin{proof}
Note that the axioms are closed under the corresponding operations mean as follows: For $\rm{E}_1$, let $f$ and $g$ be two admissible epimorphisms in $\CE$ for which there are short exact sequences $(i, f)$ and $(j, g)$ in $\CE'$. Then for the admissible epimorphism $f\circ g$ there is a short exact sequence $(k, f\circ g)$ in $\CE'.$ For $(\rm{E}_2)$, assume that $(i, p)$ in $\CE'$ and a morphism $\phi:X\rt X'$, then any short exact sequence $(i', p')$ obtained by  the push-out in the axiom $\rm{E}_2$ has to be in $\CE'.$ Accordingly, we can say similarly for the opposite axioms.	Such  assumptions clearly ensure that  $\CE'$ satisfies the axioms.	
	\end{proof}
An object $P$ of an exact category $(\mathcal{C},\mathcal{E})$ is called {\em projective} if for each admissible epimorphism $p:Y\rightarrow Z$ and each morphism $f:P\rightarrow Z$, there exists a morphism $g:P\rightarrow Y$ such that $f=p\circ g$. The full subcategory of projective objects is denoted by $\mathcal{P}(\mathcal{C}, \CE)$, short only $\CP(\mathcal{C})$ if no confusion may arise. We say an exact category $(\mathcal{C},\mathcal{E})$ {\em has enough projective objects} if for each object $X\in\mathcal{C}$ there is an admissible epimorphism $p: P\rightarrow X$ with $P\in\mathcal{P}(\mathcal{C}, \CE)$. Dually, we can define injective objects. The full subcategory of injectives is denoted by $\mathcal{I}(\mathcal{C}, \CE)$, short only $\CI(\mathcal{C})$. An exact category is {\it Frobenius} provided that it has enough projectives and injectives, and moreover, the classes of projectives and injectives coincide.\\
 Let $(\mathcal{C}, \CE)$ be an exact category. For any $X, Z$ in $\mathcal{C}$, the extension group $\Ext^1_{\mathcal{C}}(Z,X)$ denotes the set of the Yoneda equivalences of short exact sequences $0\rightarrow X\rightarrow Y\rightarrow Z\rightarrow 0$ in $\mathcal{E}$.  We denote by $\underline{\mathcal{C}}$ the stable category of $\mathcal{C}$ which is defined as follows: the objects of $\underline{\mathcal{C}}$ are the same as $\mathcal{C}$ and the morphism space is defined by
$$\Hom_{\underline{\mathcal{C}}}(X, Y):= \Hom_{\mathcal{C}}(X, Y)/ \CP_{\CE}(X, Y)$$
for $X, Y$ in $\mathcal{C}$. We denote by $\CP_{\CE}(X, Y)$ the subgroup of $\Hom_{\mathcal{C}}(X, Y)$ which factor through an object of $\CP(\mathcal{C}, \CE)$. The costable category $\overline{\mathcal{C}}$ of $\mathcal{C}$ is defined dually in terms of the injective objects $\mathcal{I}(\mathcal{C}, \CE)$.\\
The bounded derived category of an exact category may  be defined in the same way as the usual one whenever the exact category by the Thomason's result to be  {\it weakly split
idempotent}. We recall that $\mathcal{C}$ is called weakly split idempotent provided  every retraction in $\mathcal{C}$ is an admissible epimonomorphism. Let $\mathbb{K}^{\rm{b}}(\mathcal{C})$ be the bounded  homotopy category of chain complexes of objects of $\mathcal{C}$. Let $\rm{Ac}^{\rm{b}}(\mathcal{C})$ be the full subcategory of $\mathbb{K}^{\rm{b}}(\mathcal{C})$
consisting of acyclic complexes. A complex $A=(A^n, d^n)$ over the exact category $\mathcal{C}$ is said to be {\it acyclic} if each differential $d^n$, $n \in  \mathbb{Z},$ factors as $A^n \st{p^n}\rt Z^{n+1}\st{q^{n+1}}\rt A^{n+1}$ such that $p^n$ and $q^{n+1}$ are respectively admissible epimorphism and admissible monomorphism, and also sequence $0 \rt Z^n \st{q^n}\rt A^n \st{p^n}\rt Z^{n+1}\rt 0$ lies in $\CE.$ The assumption of being weakly split idempotent implies the $\rm{Ac}^{\rm{b}}(\mathcal{C})$ is closed under direct summands. Then bounded derived catgery $\D^{\rm{b}}(\mathcal{C})$ is defined to be the {\it Verdier quotient}
$$\D^b(\mathcal{C})=\frac{\mathbb{K}^{\rm{b}}(\mathcal{C})}{\rm{Ac}^{\rm{b}}(\mathcal{C})}$$
as described, e.g. in Neeman \cite[Chapter 2]{N}. Inspired by the singularity category for $\mmod R$ over a Noetherian ring, which was studied by Buchweitz under the name of ``stable derived category'', we will define the singularity category for exact categories. The singularity category of the exact category $(\mathcal{C}, \CE)$ is by definition the Verdier quotient
$$\D_{\rm{sg}}(\mathcal{C})=\frac{\D^{\rm{b}}(\mathcal{C})}{\K^{\rm{b}}(\CP)}$$
where $\K^{\rm{b}}(\CP)$ is the homotopy category of bonded complexes of projective objects in $\mathcal{C}$.
\subsection{Morphism categories}
Assume that $\mathcal{A}$ is an abelian category. The {\em morphism category} of $\mathcal{A}$ is an abelian  category $\textup{Mor}(\mathcal{A})$ defined by the following data.
The objects of $\textup{Mor}(\mathcal{A})$ are all  the morphisms $f:X\rightarrow Y$ in $\mathcal{A}$. The morphisms from object $f:X\rightarrow Y$ to $f':X'\rightarrow Y'$ are pairs $(a,b)$, where $a:X\rightarrow X'$ and $b:Y\rightarrow Y'$ such that $b\circ f=f'\circ a$. The composition of morphisms is component-wise. We denote by $\textup{Mono}(\mathcal{A})$ the full subcategory of $\textup{Mor}(\mathcal{A})$ consisting of all monomorphisms in $\mathcal{A}$, which is called the {\em monomorphism category of $\mathcal{A}$}. Dually, we define the {\em epimorphism category} $\textup{Epi}(\mathcal{A})$ of $\mathcal{A}$.   $\textup{Mono}(\mathcal{A})$, resp. $\textup{Epi}(\mathcal{A})$,   is an additive category of $\textup{Mor}(\mathcal{A})$ which is closed under extensions, thus it becomes naturally an exact category. 
 In case that $\CA=\mmod \La$,  we know by a  general fact the category $\rm{H}(\mmod \La)$, or simply $\rm{H}(\La)$,  is equivalent to the category of finitely generated right modules over the path algebra $\La\mathbb{A}_2 \simeq T_2(\La)$, where  $T_2(\La)= \tiny {\left[\begin{array}{ll} \La & \La \\ 0 & \La \end{array} \right]}$, upper triangular $2\times 2$ matrix algebra over $\La.$ Because of this equivalence we identify the morphisms in $\mmod \La$ with the (finitely generated) right $T_2(\La)$-modules throughout  the paper. To consider a map $A \st{f}\rt B$ as an object in $\rm{H}(\La)$ we use the parentheses, and shown by $(A \st{f}\rt B)$. If no ambiguity may arise, without parentheses or just  with $f$ sometimes will be presented.
 We also will sometimes  write either
$$
(AB)_f \quad \text{or} \quad \begin{array}{c}
A \\
\mathrel{\mathop{\downarrow}^f}
\\
B
\end{array} $$
Especially, the first notation is used in diagrams. A morphism $(\phi_1, \phi_2): (A\st{f}\rt B)\rt (C\st{g}\rt D)$ in $\rm{H}(\La)$ is also visualized as 
{\footnotesize  \[\xymatrix@R-2pc {  &  A\ar[dd]^{f}  & C\ar[dd]~  \\   &  _{\ \ \ \ \   }\ar[r]^{\phi_1}_{\phi_2}  \ar[r]&_{\ \  }~ {g}   \\ & B & D  }\]}

Let $\CX$ be a subcategory of $\mmod \La$. Denote by $\CS_{\CX}(\La)$, resp. $\CF_{\CX}(\La)$, the full subcategory of $\rm{Mono}(\mmod \La)$, resp. $\rm{Epi}(\mmod \La)$, consisting of all monomorphisms, resp. epimorphisms,  $A \st{f}\rt B$ such that $A, B$ and $\rm{Cok} \ f$, resp. $\Ker \ f$,  belong to $\CX$. If $\CX=\mmod \La$, then $\CS_{\mmod \La}(\La)$, resp. $\CF_{\mmod \La}(\La)$, is nothing but $\rm{Mono}(\mmod \La)$, resp. $\rm{Epi}(\mmod \La)$. In this case, we suppress ``$\mmod \La$'' and only write $\CS(\La)$ and $\CF(\La)$, the similar notations used in \cite{RS2}. 

\subsection{An equivalence}\label{An equivalence}
Following \cite{MT}, a subcategory $\CX$ of $\mmod \La$ is called {\it quasi-resolving} if it satisfies the following conditions:
\begin{itemize}
	\item [$(1)$] $\CX$ contains $\rm{prj}\mbox{-}\La$;
	\item[$(2)$] $\CX$ is closed under finite direct sums, that is, for a finite number of objects $X_1, \cdots X_n$ in $\CX$ the direct sum $X_1\oplus\cdots\oplus X_n$ is in $\CX$;
	\item [$(3)$] $\CX$ is closed under kernels of epimorphisms, that is, for each short exact sequence $0 \rt  A \rt B \rt C \rt 0$, if $B$ and $C$ are in $\CX$, then so is $A$.
\end{itemize}
Moreover, a quasi-resolving subcategory $\CX$ is called {\it resolving} if it is closed under direct summand and extensions.

Let $\CX$ be a quasi-resolving subcategory of $\mmod \La$.  In \cite[Construction 3.1]{H2} the functor $\Psi_{\CX}:\CS_{\CX}(\La)\rt \mmod \underline{\CX}$ is defined as follows. Take a monomorphism $(A \st{f}\rt B)$ in $\CS_{\CX}(\La)$, let $\Psi_{\CX}(A\st{f}\rt B)$ be a fixed functor in $\mmod \CX$ which is fitted in the following exact sequence in $\mmod \CX$,
$$0 \rt(-, A)\st{(-, f)}\rt (-, B)\rt (-, \rm{Cok} \ f)\rt F \rt 0.$$
Since $F$ is obtained by a short sequence in $\mmod \La$ then it belongs to $\mmod \underline{\CX}$. It is proved that $\Psi_{\CX}$ is full, dense and objective. Hence it induces the following equivalence
$$\CS_{\CX}(\La)/\CU\simeq \mmod \underline{\CX},$$
where $\CU$ is the idempotent ideal generated by the objects in the form $X\st{1}\rt X$ or $0\rt X$ where $X$ runs through the object in $\CX$, for more details see \cite[Theorem 3.2]{H2}. In fact, the ideal $\CU$ is formed by the morphisms which factors through an object as a finite  direct sum of copies of those forms. We recall from \cite[Appendix]{RZ} that an additive functor $F: \CA\rt \CB$ between additive categories $\CA$ and $\CB$ is  said to be objective provided any morphism $f:A\rt A'$ in $\CA$ with $F(f)=0$ factors through a kernel object of $F$, i.e., $f=g\circ h$, for some $h:A \rt C$ and $g:C\rt B$ with $F(C)=0.$ Also, for an ideal $\CI$ of an additive category $\mathcal{C}$, we denote by $\mathcal{C}/\CI$ the corresponding factor category. It has the same objects as $\mathcal{C}$ and $\Hom_{\mathcal{C}/\CI}(X, Y)=\Hom_{\mathcal{C}}(X, Y)/\CI(X,  Y)$ for any  pair $X, Y$ of objects in $\mathcal{C}$.
\section{Three types of exact categories}\label{Section3}
In this section we will study three different kinds of exact structures on $\CS_{\CX}(\La)$ for a resolving and  contravariantly finite subcategory $\CX$ of $\mmod \La$. Then we will  characterize  the indecomposable projective and injective objects for each type of the exact categories, and moreover see  that these exact categories whether or not  have  enough projectives and injectives. Note that since we can consider $\CS_{\CX}(\La)$ as a subcategory of $\mmod T_2(\La)$ being closed under direct summands, hence we can conclude that $\CS_{\CX}(\La)$ has  Krull-Schmidt property.

From now on, let $\CX \subseteq \mmod \La$ be a resolving and contravariantly finite subcategory throughout the paper, except otherwise stated. However, in important parts of the paper to put the emphasis we recall the assumption of being resolving and contravariantly finite. Although, in some results we do not need assumption of being contravariantly finite. We need this assumption, in particular, for when we need $\mmod \CX$ to be abelian category. Since $\CX$ is closed under extensions then it inherits an exact structure from  $\mmod \La$ by the class of all short exact sequences in $\mmod \La$ with the terms in $\CX.$ In the rest, we consider $\CX$ as an exact category with this inherited exact structure. Denote by $\CP(\CX)$, resp. $\CI(\CX)$, the subcategory of $\CX$ consisting of the projective, resp. injective, objects in the canonical exact category $\CX$. It can be easily seen that $\CP(\CX)=\rm{prj}\mbox{-}\La.$ Hence the stable category $\underline{\CX}$ of the exact category $\CX$ is exactly the full subcategory of $\underline{\rm{mod}}\mbox{-}\La$ consisting of modules in $\CX.$ But, $\CI(\CX)$ may be different from $\rm{inj}\mbox{-}\La$, the subcategory of injective modules in $\mmod \La.$

\subsection{Case 1} Since $\CX$ is closed under extensions, then $\CS_{\CX}(\La)$  is closed under extensions in the morphism category $\rm{H}(\La)$. Hence $\CS_{\CX}(\La)$ becomes canonically an exact category by the class of all short exact sequences in $\rm{H}(\La)$ with terms in $\CS_{\CX}(\La)$.  Let us  denote by $\CE_{\CX}$ such short exact sequences.  When we call $\CS_{\CX}(\La)$ as an exact category in the paper we mean with the exact structure given by $\CE_{\CX}$.
\begin{proposition}\label{projective objects in monomorphism}
We have the following assertions.
	
	\begin{itemize}
		\item [$(1)$]
		A monomorphism $A\st{f}\rt B$ is an indecomposable projective object  in the exact  category $\CS_{\CX}(\La)$ if and only if it is isomorphic to either $P\st{1}\rt P$ or $0 \rt P$ with indecomposable projective  $P$ in $\mmod \La.$
		\item [$(2)$] Assume $\mathcal{X}$ has enough injectives. A monomorphism $A\st{f}\rt B$ is an indecomposable injective object in the exact  category $\CS_{\CX}(\La)$ if and only if it is isomorphic to either $I\st{1}\rt I$ or $0 \rt I$ with indecomposable injective  module $I$ in the exact category $\CX$.
	\end{itemize}
Moreover, the exact category $\CS_{\CX}(\La)$ has enough projectives, and if $\CX$ is  the same as $(2)$, then so $\CS_{\CX}(\La)$ has enough  injectives. 	
\end{proposition}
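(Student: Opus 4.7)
The plan is to handle projectives and injectives in parallel: first verify that each listed object is projective (resp.\ injective) in $\CS_{\CX}(\La)$, then construct for an arbitrary $(A\st{f}\rt B)\in\CS_{\CX}(\La)$ an admissible epimorphism from (resp.\ admissible monomorphism into) a direct sum of such objects, and finally invoke Krull--Schmidt to conclude the classification of indecomposables. I note $\CS_{\CX}(\La)$ inherits Krull--Schmidt from $\mmod T_2(\La)$ since it is closed under direct summands.

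For (1), I would first use the natural identifications $\Hom_{\mathrm{H}(\La)}((P\st{1}\rt P),(A\st{f}\rt B))\simeq\Hom_{\La}(P,A)$ and $\Hom_{\mathrm{H}(\La)}((0\rt P),(A\st{f}\rt B))\simeq\Hom_{\La}(P,B)$ to see that $(P\st{1}\rt P)$ and $(0\rt P)$ are projective in $\mathrm{H}(\La)$ for $P\in\prj\La$, hence in the extension-closed subcategory $\CS_{\CX}(\La)$. For enough projectives, given $(A\st{f}\rt B)$ with cokernel $C$, I would take projective surjections $p_A:P_A\twoheadrightarrow A$ and $p_C:P_C\twoheadrightarrow C$, lift $p_C$ along $B\twoheadrightarrow C$ to $\tilde p_C:P_C\rt B$, and map $(P_A\st{1}\rt P_A)\oplus(0\rt P_C)$ onto $(A\st{f}\rt B)$ by $p_A$ on top and $[fp_A,\,\tilde p_C]$ on the bottom. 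Componentwise surjectivity is a short diagram chase. The $3\times 3$ lemma applied to the resulting morphism between $0\rt A\rt B\rt C\rt 0$ and $0\rt P_A\rt P_A\oplus P_C\rt P_C\rt 0$ then shows the componentwise kernel is itself a short exact sequence $0\rt K_1\rt K_2\rt K_3\rt 0$ with $K_1=\ker p_A$, $K_3=\ker p_C$ both in $\CX$ (closure under kernels of epimorphisms), and hence $K_2\in\CX$ (closure under extensions); so the kernel object $(K_1\hookrightarrow K_2)$ lies in $\CS_{\CX}(\La)$ and the map is admissible in $\CE_{\CX}$. Any indecomposable projective is then a direct summand of such a sum, and Krull--Schmidt forces it into one of the listed forms.

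Part (2) proceeds dually and uses that $\CX$ has enough injectives. I would check that $(I\st{1}\rt I)$ is injective for $I\in\CI(\CX)$ by noting that a morphism $(A\st{f}\rt B)\rt (I\st{1}\rt I)$ is determined by a single $\beta:B\rt I$; along an admissible mono $(A\st{f}\rt B)\hookrightarrow(A'\st{f'}\rt B')$ in $\CE_{\CX}$ the bottom inclusion $B\hookrightarrow B'$ is a conflation in $\CX$, so $\beta$ extends by injectivity of $I$. For $(0\rt I)$, a map out of $(A\st{f}\rt B)$ factors through $\mathrm{Cok}\,f\rt I$, which extends along the induced inclusion of cokernels. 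For enough injectives, given $(A\st{f}\rt B)$ with cokernel $C$, I would take admissible monos $\iota_A:A\hookrightarrow I_A$ and $\iota_C:C\hookrightarrow I_C$ with $I_A,I_C\in\CI(\CX)$, extend $\iota_A$ along $f$ to some $\beta:B\rt I_A$, and embed $(A\st{f}\rt B)$ into $(I_A\st{1}\rt I_A)\oplus(0\rt I_C)$ via $(\iota_A,(\beta,\iota_C\pi))$ with $\pi:B\twoheadrightarrow C$. The snake lemma then yields a short exact sequence $0\rt I_A/A\rt \mathrm{Cok}(\beta,\iota_C\pi)\rt I_C/C\rt 0$ whose outer terms lie in $\CX$, so its middle term lies in $\CX$ by extension-closure and the whole cokernel lies in $\CS_{\CX}(\La)$. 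Classification of indecomposable injectives follows exactly as in (1).

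The main technical hurdle will be the diagrammatic bookkeeping in these two $3\times 3$/snake diagrams: one has to check that every relevant kernel and cokernel remains in $\CX$. This is precisely where the resolving hypothesis (for part (1)) and the enough-injectives-in-$\CX$ plus extension-closure hypothesis (for part (2)) are consumed; once the correct diagram is set up, everything else is routine.
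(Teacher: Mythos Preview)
Your proposal is correct and follows essentially the same approach as the paper, whose own proof is simply a citation to \cite[Lemma 5.2]{H2}: that lemma (as the paper describes) constructs for each object of $\CS_{\CX}(\La)$ an admissible epimorphism from a direct sum of objects of the listed forms, which is precisely your strategy. You supply the explicit $3\times 3$/snake verifications and the dual injective construction that the paper leaves to the reference and to duality.
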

\begin{proof}
	The first part follows from \cite[Lemma 5.2]{H2}, although, in there,  the assumption of $\CX$ to be contravariantly finite is stated in the lemma but  is not really used. The latter part can be proved by the proof of the lemma where for any object in $\CS_{\La}(\CX)$ an admissible epimorphism is constructed. Dually we can observe for having enough injectives.
\end{proof}
The above result says the property of being Frobenius can be interchanged between $\CX$ and $\CS_{\CX}(\La)$.
\begin{corollary}
	Let $\CX$ be a Frobenius exact category. Then, the exact category  $\CX$ is Frobenius if and only if so is the exact category $\CS_{\CX}(\La)$.
\end{corollary}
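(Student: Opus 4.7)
The plan is to read the equivalence directly off Proposition \ref{projective objects in monomorphism}, using the definition that an exact category is Frobenius iff it has enough projectives, enough injectives, and these two subcategories coincide. Since $\CS_{\CX}(\La)$ is Krull--Schmidt (as noted at the start of Section \ref{Section3}), equality of two additive subcategories closed under summands reduces to a matching of their indecomposable objects up to isomorphism, and this is the form in which I would check $\CP(\CS_{\CX}(\La))=\CI(\CS_{\CX}(\La))$.

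First I would dispose of the ``enough'' conditions. By the last sentence of Proposition \ref{projective objects in monomorphism}, $\CS_{\CX}(\La)$ always has enough projectives, and it has enough injectives as soon as $\CX$ does. The hypothesis that $\CX$ is Frobenius supplies the latter, so part $(2)$ of the proposition is applicable and $\CS_{\CX}(\La)$ has enough injectives as well. Hence the only condition still to compare between the exact categories $\CX$ and $\CS_{\CX}(\La)$ is $\CP=\CI$.

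Then I would combine parts $(1)$ and $(2)$ of the proposition to list indecomposables: those of $\CP(\CS_{\CX}(\La))$ are the $(P\st{1}\rt P)$ and $(0\rt P)$ with $P$ indecomposable projective in $\mmod \La$, and those of $\CI(\CS_{\CX}(\La))$ are the $(I\st{1}\rt I)$ and $(0\rt I)$ with $I$ indecomposable in $\CI(\CX)$. Since $\CP(\CX)=\rm{prj}\mbox{-}\La$ holds unconditionally, these two lists agree (up to isomorphism) iff $\rm{prj}\mbox{-}\La=\CI(\CX)$, iff $\CP(\CX)=\CI(\CX)$, iff $\CX$ is Frobenius. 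Consequently $\CP(\CS_{\CX}(\La))=\CI(\CS_{\CX}(\La))$ iff $\CX$ is Frobenius, which together with the previous paragraph yields the equivalence.

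The argument is essentially bookkeeping; the whole content lies in Proposition \ref{projective objects in monomorphism}. The only mild point is that the classification of injective indecomposables in part $(2)$ is stated under the assumption that $\CX$ has enough injectives, which in the present corollary is automatic from the Frobenius hypothesis on $\CX$.
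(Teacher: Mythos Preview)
Your proposal is correct and is essentially the paper's approach: the paper gives no explicit proof, merely noting before the corollary that ``the above result says the property of being Frobenius can be interchanged between $\CX$ and $\CS_{\CX}(\La)$,'' i.e.\ it is an immediate consequence of Proposition \ref{projective objects in monomorphism}. Your write-up is simply an explicit unpacking of that implication, matching indecomposable projectives with indecomposable injectives via parts $(1)$ and $(2)$.
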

\subsection{Case 2}
 Let $\mathcal{E}^{\rm{cw}}_{\CX} \subseteq \CE_{\CX}$ be the class of all exact sequences

{\footnotesize  \[ \xymatrix@R-2pc {  &  ~ X_1\ar[dd]^{f}~   & Z_1\ar[dd]^{h}~  & Y_1\ar[dd]^{g} \\ 0 \ar[r] &  _{ \ \ \ \ } \ar[r]^{\phi_1}_{\phi_2}  &_{\ \ \ \ \ } \ar[r]^{\psi_1}_{\psi_2} _{\ \ \ \ \ }&  _{\ \ \ \ \ }\ar[r] & 0 \\ & X_2 & Z_2 & Y_2 }\]}
such that the sequences $0 \rt X_i\st{\phi_i}  \rt Z_i  \st{\psi_i} \rt Y_i \rt 0 $, $i=1, 2,$ in $\CX$ are split. Indeed, it is not difficult to see that any short exact sequence in $\mathcal{E}^{\rm{cw}}_{\CX}$  is isomorphic to a short exact sequence in the following form
{\footnotesize  \[ \xymatrix@R-2pc {  &  ~ X_1\ar[dd]^{f}~   & X_1\oplus Y_1\ar[dd]^{p}~  & Y_1\ar[dd]^{g} \\ 0 \ar[r] &  _{ \ \ \ \ } \ar[r]^{[1~~0]^t }_{[1~~0]^t}  &_{\ \ \ \ \ } \ar[r]^{[0~~1]}_{[0~~1]} _{\ \ \ \ \ }&  _{\ \ \ \ \ }\ar[r] & 0 \\ & X_2 & X_2\oplus Y_2 & Y_2 }\]}
where
${p}= \tiny {\left[\begin{array}{ll} f & q \\ 0 & g \end{array} \right]}$ with $q:Y_1 \rt X_2$. To make  our argument easier  without loss of generality we usually work with the short exact sequences in the above form when dealing with the short exact sequences in $\mathcal{E}^{\rm{cw}}_{\CX}$.
\begin{lemma}\label{Lemma 3.2}
	The class $\mathcal{E}^{\rm{cw}}_{\CX}$, defined in the above, is closed under pull-backs and push-outs taken in $\rm{H}(\La).$
\end{lemma}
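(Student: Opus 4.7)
The plan is to reduce the problem to a componentwise analysis in $\mmod \La$. Pullbacks and pushouts in $\rm{H}(\La)$ are formed componentwise, and the elementary fact that a split short exact sequence in an abelian category remains split under any pullback or pushout will be the engine driving the argument.

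Start by writing a given $\xi \in \mathcal{E}^{\rm{cw}}_{\CX}$ in the normal form displayed just before the lemma, so its middle term is $(X_1\oplus Y_1 \st{h}\rt X_2\oplus Y_2)$ with $h = \bigl[\begin{smallmatrix} f & q \\ 0 & g \end{smallmatrix}\bigr]$ and the two componentwise short exact sequences are the canonical split ones. Given a morphism $\phi = (\phi_1,\phi_2): (Y'_1\st{g'}\rt Y'_2) \rt (Y_1\st{g}\rt Y_2)$ in $\rm{H}(\La)$ with $(Y'_1\st{g'}\rt Y'_2) \in \CS_{\CX}(\La)$, I would form the pullback $\xi\cdot\phi$ componentwise in $\mmod \La$. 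Preservation of splitness forces each componentwise sequence of $\xi\cdot\phi$ to again be a canonical split sequence, now with middle term $X_i\oplus Y'_i$, and a short diagram chase identifies the new middle map as $h' = \bigl[\begin{smallmatrix} f & q\circ\phi_1 \\ 0 & g' \end{smallmatrix}\bigr]$, which is upper triangular with monomorphisms on the diagonal, hence itself a monomorphism. The pushout case is entirely dual: pushing $\xi$ out along a morphism $\varphi = (\varphi_1,\varphi_2): L \rt L'$ with $L' = (X'_1\st{f'}\rt X'_2) \in \CS_{\CX}(\La)$ yields a componentwise split sequence with new middle map $\bigl[\begin{smallmatrix} f' & \varphi_2\circ q \\ 0 & g \end{smallmatrix}\bigr]$.

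It only remains to verify that the new middle term lies in $\CS_{\CX}(\La)$, which is the single place the hypotheses on $\CX$ enter. The new terms $X_i\oplus Y'_i$ (resp.\ $X'_i\oplus Y_i$ in the pushout case) lie in $\CX$ since $\CX$ is closed under direct sums, while the snake lemma applied to the two split componentwise sequences produces an exact sequence $0 \rt \Coker(f) \rt \Coker(h') \rt \Coker(g') \rt 0$ whose outer terms lie in $\CX$; since $\CX$ is resolving, hence closed under extensions, the cokernel of the new middle map also lies in $\CX$, placing the pullback (resp.\ pushout) in $\mathcal{E}^{\rm{cw}}_{\CX}$. I do not anticipate any real obstacle: the whole argument is a componentwise transport of the standard preservation of splitness, with the only substantive use of the structure of $\CX$ being this final extension-closure check.
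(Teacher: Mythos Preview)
Your proposal is correct and follows essentially the same route as the paper's proof: both arguments reduce to the componentwise observation that split short exact sequences in $\mmod\La$ remain split under pullback and pushout. The paper only writes out the pushout case and leaves the verification that the new middle term lies in $\CS_{\CX}(\La)$ implicit (relying on extension-closure of $\CS_{\CX}(\La)$ in $\rm{H}(\La)$), whereas you make this explicit via the snake-lemma computation of the cokernel and even compute the matrix form of the new middle map; this extra detail is harmless and the underlying strategy is the same.
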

\begin{proof} We only prove to be closed under push-outs.
 Assume that
	$0 \rt (X_1 \st{f}\rt X_2)\rt (Z_1 \st{h}\rt Z_2) \rt (Y_1 \st{g}\rt Y_2)\rt 0$ is in $\mathcal{E}^{\rm{cw}}_{\CX}$ and $\varphi=(\varphi_1, \varphi_2):(X_1 \st{f}\rt X_2) \rt (X'_1 \st{f}\rt X'_2)$ a morphism in $\CS_{\CX}(\La)$. By taking push-out  of the short exact sequence along $\varphi=(\varphi_1, \varphi_2)$ in $\rm{H}(\La)$, which exists definitely, then there is the following commutative diagram in $\rm{H}(\La)$
	$$\xymatrix
	{0\ar[r] & (X_1X_2)_f\ar[r]\ar[d]^{\varphi} & (Z_1Z_2)_h\ar[r]\ar[d]^{\varphi'} & (Y_1Y_2)_g\ar[r] \ar@{=}[d] & 0\\
		0\ar[r] & (X'_1X'_2)_{f'}\ar[r] & (Z'_1Z'_2)_{h'} \ar[r] & (Y_1Y_2)_g\ar[r] & 0.
	}$$
By the above diagram we have the following diagrams
$$\xymatrix
{0\ar[r] & X_i\ar[r]\ar[d]^{\varphi_i} & Z_i\ar[r]\ar[d] & Y_i\ar[r] \ar@{=}[d] & 0 &(\dagger_i)&\\
	0\ar[r] & X'_i\ar[r] & Z'_i \ar[r] & Y_i\ar[r] & 0&&
}$$

in $\mmod \La$ for $i=1, 2.$ On the other hand, by our assumption, the top row in the  digram $(\dagger_i)$ is split, and so is  the low row. Hence each row in the short exact sequence $0 \rt (X'_1 \st{f'}\rt X'_2)\rt (Z'_1 \st{h'}\rt Z'_2) \rt (Y_1 \st{g}\rt Y_2)\rt 0$ is split, consequently it lies in $\mathcal{E}^{\rm{cw}}_{\CX}$. So we are done.
\end{proof}
\begin{corollary}\label{corollary 3.3}
	The class $\mathcal{E}^{\rm{cw}}_{\CX}$ gives an exact structure on the additive category  $\CS_{\CX}(\La)$.
\end{corollary}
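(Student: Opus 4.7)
The plan is to deduce this from Lemma~\ref{Lemma2.1} applied to the ambient exact category $(\CS_{\CX}(\La), \CE_{\CX})$ of Case~1 and its subclass $\mathcal{E}^{\rm{cw}}_{\CX}$. Since every member of $\mathcal{E}^{\rm{cw}}_{\CX}$ is a kernel--cokernel pair in $\CS_{\CX}(\La)$ by construction, I only need to verify that the subclass is closed under isomorphism, contains the identities as both admissible epimorphisms and admissible monomorphisms, and is stable under the operations prescribed by $(\rm{E}_1)$, $(\rm{E}_2)$ and their opposites.

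Closure under isomorphism of short exact sequences is immediate, since the defining property -- splitness of the first and second rows -- is preserved by any componentwise isomorphism. For the identity $1_{(A\st{f}\rt B)}$ one uses the sequence $0\rt 0\rt (A\st{f}\rt B)\st{1}\rt (A\st{f}\rt B)\rt 0$, whose two rows are $0\rt 0\rt A\st{1}\rt A\rt 0$ and $0\rt 0\rt B\st{1}\rt B\rt 0$, both evidently split; this furnishes $(\rm{E}_0)$ and $(\rm{E}_0)^{\op}$. The push-out and pull-back axioms $(\rm{E}_2)$ and $(\rm{E}_2)^{\op}$ are then exactly Lemma~\ref{Lemma 3.2}: the push-out (or pull-back) along a morphism of $\CS_{\CX}(\La)$ already exists in the ambient exact category $(\CS_{\CX}(\La), \CE_{\CX})$, and by the lemma its first and second rows are still split, so it lies in $\mathcal{E}^{\rm{cw}}_{\CX}$.

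The remaining points are the composition axioms $(\rm{E}_1)$ and $(\rm{E}_1)^{\op}$, which I expect to be the only genuine (though still routine) calculation. To handle $(\rm{E}_1)$, suppose two composable admissible epimorphisms in $\mathcal{E}^{\rm{cw}}_{\CX}$ are realised by sequences $0\rt A\rt U\rt Y\rt 0$ and $0\rt A'\rt W\rt U\rt 0$. The composite admissible epimorphism $W\twoheadrightarrow Y$ is automatically an admissible epimorphism in the ambient $(\CS_{\CX}(\La), \CE_{\CX})$, giving a short exact sequence $0\rt \tilde{A}\rt W\rt Y\rt 0$ in $\CE_{\CX}$. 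At each position $i=1,2$, a splitting of the $i$th row of the new sequence is produced by composing a chosen section $s_i:Y_i\rt U_i$ of the first row with a chosen section $s_i':U_i\rt W_i$ of the second: $s_i'\circ s_i$ is a section, confirming that the new row splits. Hence $0\rt \tilde{A}\rt W\rt Y\rt 0$ belongs to $\mathcal{E}^{\rm{cw}}_{\CX}$, and $(\rm{E}_1)^{\op}$ follows symmetrically using retractions instead of sections. With all four axioms verified, Lemma~\ref{Lemma2.1} yields the conclusion.
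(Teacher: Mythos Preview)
Your proof is correct and follows essentially the same route as the paper: both invoke Lemma~\ref{Lemma2.1} with the ambient exact structure $\CE_{\CX}$, use Lemma~\ref{Lemma 3.2} for the push-out/pull-back axioms, and check $(\rm{E}_1)$ and $(\rm{E}_1)^{\op}$ via the elementary fact that a composite of split epimorphisms (resp.\ monomorphisms) is again split. You have simply spelled out the details a bit more explicitly than the paper does.
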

\begin{proof}
	The class $\mathcal{E}^{\rm{cw}}_{\CX}$ is clearly closed under isomorphisms and holds the axiom $\rm{E}_0$ and its dual. For $\rm{E}_1$, we use this easy fact, that is, the composition of splittable epimorphisms is again split. We can also conclude dually for $(\rm{E}_1)^{\rm{op}}$. The statement  follows from Lemma \ref{Lemma 3.2} in conjugation with Lemma \ref{Lemma2.1}.
\end{proof}
$\CS^{\rm{cw}}_{\CX}(\La)$ stand for the exact category $(\CS_{\CX}(\La),
\mathcal{E}^{\rm{cw}}_{\CX})$.

\begin{proposition}\label{Proposition 3.4}
	Let $(X_1 \st{f} \rt X_2)$ be an object in $\CS^{\rm{cw}}_{\CX}(\La)$. Then $f$ is an indecomposable projective   object in the exact category $\CS^{\rm{cw}}_{\CX}(\La)$ if and only if it is isomorphic to either of the following objects
	\begin{itemize}
		\item [$(1)$] $X \st{1}\rt X $ for some  indecomposable object $X$ in $\CX;$
		\item[$(2)$] $0 \rt X$ for some  indecomposable  object $X$ in $\CX$.
	\end{itemize}
Moreover, $\CS^{\rm{cw}}_{\CX}(\La)$ has enough projectives.
\end{proposition}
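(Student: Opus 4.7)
The plan is to prove the characterization and the ``enough projectives'' claim in parallel by first exhibiting a specific deflation whose source is a sum of the two listed forms, then using it together with Krull-Schmidt to close the converse.

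First I would verify that objects of the two forms are indeed projective in $\CS^{\rm{cw}}_{\CX}(\La)$. Consider a conflation in $\mathcal{E}^{\rm{cw}}_{\CX}$
\[
0\rt (X_1X_2)_f \rt (Z_1Z_2)_h \st{(\psi_1,\psi_2)}\rt (Y_1Y_2)_g \rt 0
\]
whose two rows in $\CX$ are split. A morphism $(\phi_1,\phi_2):(X\st{1}\rt X)\rt (Y_1Y_2)_g$ satisfies $\phi_2=g\circ\phi_1$; using a section of the top row, pick $\tilde{\phi}_1$ with $\psi_1\circ\tilde{\phi}_1=\phi_1$, set $\tilde{\phi}_2=h\circ\tilde{\phi}_1$, and a short diagram chase shows $(\tilde{\phi}_1,\tilde{\phi}_2)$ is a lift in $\CS_{\CX}(\La)$. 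The case $(0\rt X)$ is simpler: any $(0,\phi_2)$ lifts by pulling $\phi_2$ back along a section of the bottom row. Because $\End(X\st{1}\rt X)\cong\End_{\La}(X)$ and $\End(0\rt X)\cong\End_{\La}(X)$, indecomposability of these objects is equivalent to indecomposability of $X$ in $\CX$.

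For ``enough projectives'', given an arbitrary $(X_1\st{f}\rt X_2)$ in $\CS_{\CX}(\La)$, I would consider
\[
P(f)\;:=\;(X_1\st{1}\rt X_1)\;\oplus\;(0\rt X_2)\;\cong\;\Bigl(X_1\xrightarrow{\left[\begin{smallmatrix}1\\0\end{smallmatrix}\right]} X_1\oplus X_2\Bigr)
\]
and the morphism $(1_{X_1},\,[f,\,1_{X_2}]):P(f)\rt (X_1X_2)_f$, which one checks is a well-defined map in $\rm{H}(\La)$. Its kernel in $\rm{H}(\La)$ is $(0\rt X_1)$, embedded via $(0,\,[1,-f]^t)$, and the resulting kernel-cokernel pair
\[
0\rt (0\rt X_1)\rt P(f)\rt (X_1\st{f}\rt X_2)\rt 0
\]
has top row the trivially split sequence $0\rt 0\rt X_1\st{1}\rt X_1\rt 0$ and bottom row the split short exact sequence $0\rt X_1\xrightarrow{[1,-f]^t} X_1\oplus X_2\xrightarrow{[f,1]} X_2\rt 0$. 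Thus this conflation lies in $\mathcal{E}^{\rm{cw}}_{\CX}$. Decomposing $X_1$ and $X_2$ into indecomposable summands in the Krull-Schmidt category $\CX$ produces a deflation onto $(X_1\st{f}\rt X_2)$ whose source is a direct sum of objects of the two listed forms.

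The converse implication then follows formally. Since $\CS_{\CX}(\La)$ inherits the Krull-Schmidt property (as noted at the start of Section \ref{Section3}), an indecomposable projective $(X_1\st{f}\rt X_2)$ in $\CS^{\rm{cw}}_{\CX}(\La)$ is, by projectivity, a direct summand of the $P(f)$ just constructed; by Krull-Schmidt it must be isomorphic to one of the indecomposable summands of $P(f)$, hence to some $(X\st{1}\rt X)$ or $(0\rt X)$ with $X$ indecomposable in $\CX$. The only step that requires genuine care is confirming that the bottom row of the constructed conflation splits in $\CX$ (it does, via the section $b\mapsto (0,b)$), since the splitting of \emph{both} rows is precisely what places the sequence in $\mathcal{E}^{\rm{cw}}_{\CX}$ and makes the whole argument work.
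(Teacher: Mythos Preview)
Your proof is correct and follows essentially the same approach as the paper: the same conflation $0\rt(0\rt X_1)\rt(X_1\st{[1\ 0]^t}\rt X_1\oplus X_2)\rt(X_1\st{f}\rt X_2)\rt 0$ (up to sign conventions in the maps) is used both to establish enough projectives and, via Krull--Schmidt, to force any indecomposable projective to be of the listed form. The only cosmetic difference is that the paper verifies projectivity of $(X\st{1}\rt X)$ and $(0\rt X)$ by writing explicit lifts against the canonical form of a conflation in $\mathcal{E}^{\rm{cw}}_{\CX}$, whereas you phrase the same computation in terms of sections of the split rows.
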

\begin{proof}For each $A \in \mmod \La,$  in an easy way,  we observe  that the endomorphism algebra of $(A \st{1}\rt A)$, resp. $(0 \rt A)$,  is isomorphic to $\End_{\La}(A)$. Thus, if $A$ is an indecomposable module, then the  endomorphism algebra of $(A \st{1}\rt A)$, resp. $(0 \rt A)$,  is local. Therefore, for an  indecomposable object $X \in \CX$,  the object $(X \st{1}\rt X)$, resp. $(0 \rt X)$,  is indecomposable in   $\CS^{\rm{cw}}_{\CX}(\La)$. Let $X$ be an indecomposable module in $\CX$, and  take an arbitrary  short exact sequence in $\mathcal{E}^{\rm{cw}}_{\CX}$ of  the following form
	{\footnotesize  \[ \xymatrix@R-2pc {  &  ~ X_1\ar[dd]^{f}~   & X_1\oplus Y_1\ar[dd]^{p}~  & Y_1\ar[dd]^{g} \\ 0 \ar[r] &  _{ \ \ \ \ } \ar[r]^{[1~~0]^t }_{[1~~0]^t}  &_{\ \ \ \ \ } \ar[r]^{[0~~1]}_{[0~~1]} _{\ \ \ \ \ }&  _{\ \ \ \ \ }\ar[r] & 0. \\ & X_2 & X_2\oplus Y_2 & Y_2 }\]}
	where ${p}= \tiny {\left[\begin{array}{ll} f & q \\ 0 & g \end{array} \right]}$ with $q:Y_1 \rt X_2$. To prove the ``if'' part, based on the above observation,  we only need to show that any morphisms
	
	\begin{align*}
	{\footnotesize  \xymatrix@R-2pc {  &  X\ar[dd]^{1}  & Y_1\ar[dd]~  \\   &  _{\ \ \ \ \   }\ar[r]^{d_1}_{d_2}  \ar[r]&_{\ \  }~ {g}   \\ & X & Y_2  }} &   {\footnotesize  \xymatrix@R-2pc {  &  0\ar[dd]  & Y_1\ar[dd]~  \\   &  _{\ \ \ \ \   }\ar[r]^{0}_{d}  \ar[r]&_{\ \  }~ {g}   \\ & X & Y_2  }}  	
	\end{align*}
	
	factor through the morphism
	{\footnotesize \[ \xymatrix@R-2pc {  &  X_1\oplus Y_1\ar[dd]^{p}  & Y_1\ar[dd]~  \\   &  _{\ \ \ \ \ \  \ \ \  }\ar[r]^{[0~~1]}_{[0~~1]}  \ar[r]&_{\ \  }~ {g}   \\ & X_2\oplus Y_2 & Y_2  }\]}
	To this end, it is enough to take the following morphisms for each of them, respectively, to factor the given morphisms
	
	\begin{align*}
	{\footnotesize  \xymatrix@R-2pc {  &  X\ar[dd]^{1}  & X_1 \oplus Y_1\ar[dd]~  \\   &  _{\ \ \ \ \  }\ar[r]^{[0~~d_1]^t}_{[qd_1~~~d_2]^t}  \ar[r]&_{\  }~~~{p}   \\ &  X & X_2\oplus Y_2  }} & {\footnotesize  \xymatrix@R-2pc {  &  0\ar[dd]  & X_1 \oplus Y_1\ar[dd]~  \\   &  _{\ \ \ \ \  }\ar[r]^{0}_{[0~~d]^t}  \ar[r]&_{\  }~~~{p}   \\ &  X & X_2\oplus Y_2  }}
	\end{align*}
	
	To prove the ``only if'', consider the following short exact sequence in $\mathcal{E}^{\rm{cw}}_{\CX}$, which exists   for any object $(X_1 \st{f}\rt X_2)$
	{\footnotesize  \[ \xymatrix@R-2pc {&& &  ~ 0\ar[dd]^{f}~   & X_1\ar[dd]^{[1~~0]^t}~  & X_1\ar[dd]^{f} &&(\dagger)&\\ &&0 \ar[r] &  _{ \ \ \ \   \ \ \                  } \ar[r]^>>>>>>>{0 }_>>>>>>{[1~~f]^t}  &_{\ \ \ \ \ \ \ \ \ \ \  \ \ } \ar[r]^{1}_{[f~~-1]} _{\ \ \ \ \ }&  _{\ \ \ \ \ }\ar[r] & 0.& && \\&& & X_1  & X_1\oplus X_2 & X_2&&& }\]}
	 Now by using this short exact sequence we see that any projective object in the exact category  $\CS^{\rm{cw}}_{\CX}(\La)$ is isomorphic to one of the indecomposable objects  appeared in either $(1)$ or $(2)$. The short exact sequence $(\dagger)$ also   follows $\CS^{\rm{cw}}_{\CX}(\La)$ has enough projectives. Now the proof is complete.
\end{proof}
\begin{proposition}\label{Proposition 3.5}
	Assume that
	the exact category $\CX$ has enough injectives.
	Let $(X_1 \st{f} \rt X_2)$ be an object in $\CS^{\rm{cw}}_{\CX}(\La)$. Then $f$ is an indecomposable
	injective  object in the exact category $\CS^{\rm{cw}}_{\CX}(\La)$ if and only if it is isomorphic to either of the following objects
	\begin{itemize}
		\item [$(1)$] $X \st{1}\rt X $ for some indecomposable object $X$ in $\CX;$
		\item [$(2)$] 		
		$X \st{l}\rt I$ for some indecomposable object $X$ in $\CX,$ where $l$ is a  left minimal  monomorphism, i.e., any endomorphism $g$ with $g\circ l=l$ is an automorphism, 
		 and $I $ an injective object in the exact category $\CX$;
		\item [$(3)$] 	$0 \rt I$ for some indecomposable injective object $I$ in the exact category $\CX$.
	\end{itemize}
\end{proposition}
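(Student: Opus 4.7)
The plan is to verify both directions of the characterization separately, following a strategy parallel to Proposition~\ref{Proposition 3.4} but invoking enough injectives of $\CX$ in place of the (always available) projective construction used there.

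For the \emph{``if''} direction, I will first establish indecomposability by showing that each endomorphism ring is local. For types (1) and (3) this is immediate, since $\End_{\CS^{\rm{cw}}_\CX(\La)}(X\st{1}\rt X)\simeq \End_\La(X)$ and $\End_{\CS^{\rm{cw}}_\CX(\La)}(0\rt I)\simeq \End_\La(I)$, both local when $X$ and $I$ are indecomposable. For type (2), I will analyze idempotents $(e,b)$ in $\End_{\CS^{\rm{cw}}_\CX(\La)}(X\st{l}\rt I)$: indecomposability of $X$ forces $e=0$ or $e=1$; the case $e=1$ combined with $b\circ l = l$ and left minimality of $l$ forces $b$ to be an automorphism, and idempotence then gives $b=1$; the case $e=0$ requires $b\circ l = 0$, so the idempotent decomposition $I=\ker b\oplus\im b$ has $l$ factoring through $\ker b$, and testing $1-b$ against left minimality forces $b=0$. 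Injectivity itself is checked using the explicit triangular form of conflations in $\mathcal{E}^{\rm{cw}}_\CX$ given after Lemma~\ref{Lemma 3.2}: extending a morphism into any of the three objects reduces to solving a single extension problem $e_2'\circ g = h$ for $h\colon Y\rt I$ along the inflation $g\colon Y\rt Z$ in $\CX$, which is possible because $I$ is injective in $\CX$.

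For the \emph{``only if''} direction, suppose $(X_1\st{f}\rt X_2)$ is an indecomposable injective. Let $l'\colon X_2\rt I_{X_2}$ be the injective envelope of $X_2$ in $\CX$, which exists since $\CX$ has enough injectives and is Krull--Schmidt; note $I_{X_2}/X_2\in\CX$. I claim
\[
0 \rt (X_1\st{f}\rt X_2) \xrightarrow{([f,1]^{t},[1,l']^{t})} (X_2\st{1}\rt X_2)\oplus(X_1\st{l'\circ f}\rt I_{X_2}) \rt (X_2\st{l'}\rt I_{X_2}) \rt 0
\]
is a conflation in $\mathcal{E}^{\rm{cw}}_\CX$. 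Commutativity and componentwise splitness (retractions $[0,1]$ and $[1,0]$) are immediate; the cokernel object $(X_2\st{l'}\rt I_{X_2})$ lies in $\CS_\CX(\La)$ because $l'$ is the envelope, and the middle term lies in $\CS_\CX(\La)$ because $\mathrm{cok}(l'\circ f)$ fits into a short exact sequence with $\mathrm{cok}(f)$ and $I_{X_2}/X_2$, both in $\CX$. Injectivity of $(X_1\st{f}\rt X_2)$ makes the sequence split, so it is a direct summand of the middle term. The first summand decomposes into indecomposables of type (1), while the second summand, being a monomorphism from $X_1\in\CX$ into the injective $I_{X_2}$, decomposes by pulling off the left minimal part as a direct summand of type (2) with indecomposable domain, leaving a complementary summand $(0\rt I')$ of type (3). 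Krull--Schmidt in $\CS_\CX(\La)$ then forces $(X_1\st{f}\rt X_2)$ to be isomorphic to one of these indecomposable summands.

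The main obstacle will be the decomposition of $(X_1\st{l'\circ f}\rt I_{X_2})$ into the left minimal part together with a complement of form $(0\rt I')$: this requires a Fitting-type argument in the Krull--Schmidt category $\CX$ to peel off, from the injective $I_{X_2}$, the direct summands through which $l'\circ f$ becomes zero on projection. A secondary obstacle is the idempotent-analysis for indecomposability of type (2), where one must be careful with how left minimality of $l$ interacts with the possible nontrivial idempotents of $\End_\La(I)$.
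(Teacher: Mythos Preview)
Your proposal is correct and follows essentially the same route as the paper: the paper constructs the identical conflation (up to a reordering of summands and signs) embedding $(X_1\st{f}\rt X_2)$ into $(X_2\st{1}\rt X_2)\oplus(X_1\st{mf}\rt I)$, splits off the left-minimal part of the second summand, and concludes by Krull--Schmidt, while injectivity of each of the three types is verified against the same triangular form of $\mathcal{E}^{\rm cw}_\CX$-conflations. One small imprecision worth fixing: your phrase ``a direct summand of type~(2) with indecomposable domain'' should allow for several such summands, since $X_1$ itself need not be indecomposable---but this does not affect the argument, and your idempotent analysis for the indecomposability of type~(2) is in fact more careful than the paper's one-line observation.
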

Moreover, $\CS^{\rm{cw}}_{\CX}(\La)$ has enough injectives.
\begin{proof} First we show the ``if'' part.
	Let $X$ be an indecomposable object in $\CX.$ Let also $(X \st{l}\rt I)$ be  the same as in $(2)$ of the statement. The object $(X \st{l}\rt I)$ is indecomposable in $\CS^{\rm{cw}}_{\CX}(\La)$; otherwise, since $X$ is indecomposable then there was a decomposition as $(X \st{l}\rt I)=(X \st{l'}\rt I')\oplus (0 \rt J)$ with $J\neq 0$. But this is a contraction by $l$ being left minimal.  Consider  an arbitrary  short exact sequence in the following form
	{\footnotesize  \[ \xymatrix@R-2pc {  &  ~ X_1\ar[dd]^{f}~   & X_1\oplus Y_1\ar[dd]^{p}~  & Y_1\ar[dd]^{g} \\ 0 \ar[r] &  _{ \ \ \ \ } \ar[r]^{[1~~0]^t }_{[1~~0]^t}  &_{\ \ \ \ \ } \ar[r]^{[0~~1]}_{[0~~1]} _{\ \ \ \ \ }&  _{\ \ \ \ \ }\ar[r] & 0 \\ & X_2 & X_2\oplus Y_2 & Y_2 }\]}
	in $\mathcal{E}^{\rm{cw}}_{\CX}$, where  ${p}= \tiny {\left[\begin{array}{ll} f & q \\ 0 & g \end{array} \right]}$ with $q:Y_1 \rt X_2$. To show that the  object $(X \st{l}\rt I)$ to be injective in $\CS^{\rm{cw}}_{\CX}(\La)$, it suffices to prove that any morphism {\footnotesize  \[\xymatrix@R-2pc {  &  X_1\ar[dd]^{f}  & X\ar[dd]~  \\   &  _{\ \ \ \ \   }\ar[r]^{d_1}_{d_2}  \ar[r]&_{\ \  }~ {l}   \\ & X_2 & I  }\]}
	factors through the morphism
	{\footnotesize \[ \xymatrix@R-2pc {  &  X_1\ar[dd]^{f}  & X_1\oplus Y_1\ar[dd]~  \\   &  _{\ \ \ \ \ \  \ \ \  }\ar[r]^{[1~~0]^t}_{[1~~0]^t}  \ar[r]&_{\ \  }~ {p}   \\ & X_2& X_2\oplus Y_2  }\]}
	For this, the following morphism works	
	{\footnotesize \[ \xymatrix@R-2pc {  &  X_1\oplus Y_1 \ar[dd]^{p}  & X\ar[dd]~  \\   &  _{\ \ \ \ \ \  \ \ \  }\ar[r]^{[d_1~~0]}_{[d_2~~-r]}  \ar[r]&_{\ \  }~ {l}   \\ & X_2\oplus Y_2 &I  }\]}
	where $r:Y_2 \rt I$ holds in the equality $rg=d_2q$, and it exists because of $I$  being injective in the exact category $\CX$. Let $0 \rt I$ be  the same as in $(3)$. For any arbitrary morphism
	{\footnotesize \[ \xymatrix@R-2pc {  &  X_1 \ar[dd]^{f}  & 0\ar[dd]~  \\   &  _{\ \ \ \ \ \  \ \ \  }\ar[r]^{0}_{t}  \ar[r]&_{\ \  }~ {}   \\ & X_2&I  }\]}
	We may  consider the following morphism 	{\footnotesize \[ \xymatrix@R-2pc {  &  X_1\oplus Y_1 \ar[dd]^{p}  & 0\ar[dd]~  \\   &  _{\ \ \ \ \ \  \ \ \  }\ar[r]^{0}_{[t~~-s]}  \ar[r]&_{\ \  }~ {}   \\ & X_2\oplus Y_2 &I  }\]}
	where $s:Y_2 \rt I$ is a morphism such that $sg=tq$, which exists again  because of $I$ being  injective. Finally,  in a dual way of the previous proposition one can show that an  object $(X \st{1}\rt X)$ the same as in $(1)$ is injective in the exact category $\CS^{\rm{cw}}_{\CX}(\La)$. Therefore, the proof of this part is completed.
	
	 For ``only if'', for any given object $(X_1 \st{f}\rt X_2)$,  we  first construct a  short exact sequence in $\CE^{\rm{scw}}_{\CX}$ starting at it. Since the exact category $\CX$ has enough injective then there is a short exact sequence $0 \rt X_2\st{m}\rt I\rt Y \rt 0$ with terms in $\CX$ and   $I$ an injective object in the exact category . Consider the following push-out diagram
	\[\xymatrix@C-0.5pc@R-.8pc{&0\ar[d]&0 \ar[d] &0 \ar[d] &&\\0\ar[r]& X_1
		\ar[r]^f \ar@{=}[d] & X_2 \ar[d]^m\ar[r] & \rm{Cok} \ f \ar[d]\ar[r]&
		0& \\ 0 \ar[r]  & X_1 \ar[r]  & I\ar[d]\ar[r] & D \ar[d] \ar[r]&0&\\  &  & Y\ar@{=}[r] \ar[d] &Y \ar[d]&&\\ && 0& 0& &\\}\]
	
	Since $\CX$ is closed under extensions, then $D$ belongs to $\CX$, and consequently  $(X_1 \st{mf} \rt I)$ lies in $\CS_{\CX}(\La)$. On the other hand, there is a decomposition $I=I_1\oplus I_2$ such that $p_1mf:X_1 \rt I_1$ is left minimal and $p_2mf:X_1 \rt I_2$ is zero, where $p_1$ and $p_2$ are the canonical projections. Then due to the decomposition we can identify $(X_1 \st{mf} \rt I)$ as the direct sum $(X_1\st{p_1mf}\lrt I_1)\oplus (0\rt I_2)$.  Now we are ready to introduce the promised short exact sequence in $\mathcal{E}^{\rm{cw}}_{\CX}$ as the following
	{\footnotesize  \[ \xymatrix@R-2pc {  &  ~ X_1\ar[dd]^{f}~   & X_1\oplus X_2\ar[dd]^{h}~  & X_2\ar[dd]^{m} \\ 0 \ar[r] &  _{ \ \ \ \ } \ar[r]^{[1~~f]^t }_{[m~~1]^t}  &_{\ \ \ \ \ } \ar[r]^{[-f~~1]}_{[-1~~m]} _{\ \ \ \ \ }&  _{\ \ \ \ \ }\ar[r] & 0 \\ & X_2 & I\oplus X_2 & I }\]}
	
	where  ${h}= \tiny {\left[\begin{array}{ll} mf & 0 \\ 0 & 1 \end{array} \right]}$. Trivially, the middle  object $h$  is isomorphic to the direct sum $(X_1 \st{p_1mf}\lrt I_1) \oplus (0\rt I_2)\oplus (X_2 \st{1}\rt X_2)$. The above short exact sequence gives  the proof since if the object  $f$ is injective, then it must be a direct summand of $h$, so to be isomorphic to  one of the forms $(1), (2)$ and $(3)$ in the statement. Moreover,  the last part of the statement follows from the short exact sequence.
\end{proof}
\begin{definition}\label{Definition 3.7}	
	For a given exact category $(\mathcal{C}, \CE)$ with enough projetives, resp. injectives,   we introduce the following   dimensions for any $C\in\mathcal{C}.$
	\begin{itemize}
		\item[(a)] A complex over $\mathcal{C}$
		$$0 \rt P_n \st{d_n}\rt  P_{n-1} \st{d_{n-1}}\lrt  \cdots \rightarrow P_1\st{d_1} \rightarrow P_0 \st{d_0}\rt C \rt 0,$$
		is called an (finite) {\it $\CE$-projective resolution} of $C$ in the exact category $\mathcal{C}$ if it is acyclic over $\mathcal{C}$ and $P_i$, $0 \leqslant i \leqslant n$, belongs to $\CP(\mathcal{C}).$ The value of $n$ is called the length of the $\CE$-projective resolution.
		\item[(b)] The {\it $\CE$-projective dimension} $\rm{pd_{\CE}}C$ of $C$ is the minimal non-negative integer $n$ such that there is an $\CE$-projective resolution of $\mathcal{C}$ with length $n$. If $C$ does not admit a finite $\CE$-projective resolution, then by convention the $\CE$-projective dimension of $C$ is said to be infinite.
	\end{itemize}
\end{definition}
 One can define dually the {\it $\CE$-injective  dimension} $\rm{id}_{\CE} C$ of an object $C$ for those exact categories with enough injectives.

Motivated by  hereditary algebras, we can define the following definition for the exact categories.
\begin{definition}
	Let $(\mathcal{C}, \CE )$ be an exact category with enough projectives and injectives. The exact category $\mathcal{C}$ is called hereditary if for any object $C$, $\rm{pd_{\CE}}C$ and  $\rm{id}_{\CE} C \leqslant 1.$
\end{definition}
\begin{proposition}
Assume the exact category $\CX$ has enough injectives. 	The  $\CS^{\rm{cw}}_{\CX}(\La)$  is  a hereditary exact category.
\end{proposition}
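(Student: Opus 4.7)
The plan is to show that every object $(X_1 \st{f}\rt X_2)$ in $\CS^{\rm{cw}}_{\CX}(\La)$ satisfies both $\pd_{\mathcal{E}^{\rm{cw}}_{\CX}}(X_1 \st{f}\rt X_2) \leqslant 1$ and $\id_{\mathcal{E}^{\rm{cw}}_{\CX}}(X_1 \st{f}\rt X_2) \leqslant 1$; this is precisely the definition of $\CS^{\rm{cw}}_{\CX}(\La)$ being hereditary. Fortunately, the two short exact sequences required for this have essentially already been built inside the proofs of Propositions \ref{Proposition 3.4} and \ref{Proposition 3.5}; what remains is to verify that their terms are projective, respectively injective, with respect to the exact structure $\mathcal{E}^{\rm{cw}}_{\CX}$.

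For the projective dimension, invoke the short exact sequence $(\dagger)$ displayed in the proof of Proposition \ref{Proposition 3.4}:
$$0 \rt (0 \rt X_1) \rt (X_1 \st{[1\ 0]^t}\lrt X_1 \oplus X_2) \rt (X_1 \st{f}\lrt X_2) \rt 0.$$
This sequence lies in $\mathcal{E}^{\rm{cw}}_{\CX}$ because both of its componentwise rows split. The leftmost term $(0 \rt X_1)$ is projective by Proposition \ref{Proposition 3.4}(2), and the middle term is isomorphic to $(X_1 \st{1}\rt X_1) \oplus (0 \rt X_2)$, a direct sum of projectives by Proposition \ref{Proposition 3.4}(1)--(2). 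Hence $(X_1 \st{f}\rt X_2)$ admits a projective resolution of length at most one.

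For the injective dimension, use the short exact sequence built in the proof of Proposition \ref{Proposition 3.5}. After fixing an embedding $m: X_2 \hookrightarrow I$ into an injective object $I$ of the exact category $\CX$ (available because $\CX$ has enough injectives), one obtains in $\mathcal{E}^{\rm{cw}}_{\CX}$
$$0 \rt (X_1 \st{f}\lrt X_2) \rt (X_1 \oplus X_2 \st{h}\lrt I \oplus X_2) \rt (X_2 \st{m}\lrt I) \rt 0,$$
with $h$ the diagonal map having entries $mf$ and $1_{X_2}$. The middle term is injective by exactly the decomposition carried out in the proof of Proposition \ref{Proposition 3.5}: splitting $I = I_1 \oplus I_2$ via the left-minimality of $mf$ identifies the middle with $(X_1 \rt I_1) \oplus (0 \rt I_2) \oplus (X_2 \st{1}\rt X_2)$, whose summands are of types (1)--(3) in Proposition \ref{Proposition 3.5}. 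For the rightmost term, apply the same device directly to $m$: decompose $I = E \oplus I''$ with $E$ the injective hull of $\im(m)$, so that $(X_2 \st{m}\rt I) \cong (X_2 \rt E) \oplus (0 \rt I'')$ is a sum of injectives of types (2) and (3).

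No serious obstacle arises here: the heavy lifting has already been absorbed into Propositions \ref{Proposition 3.4} and \ref{Proposition 3.5}, and the only mild subtlety is the re-decomposition of $I$ required to recognize the cokernel $(X_2 \st{m}\rt I)$ of the injective resolution as an injective object, which in turn follows from the same left-minimality splitting already employed in the proof of Proposition \ref{Proposition 3.5}.
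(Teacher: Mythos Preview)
Your proof is correct and follows essentially the same route as the paper: both invoke the two short exact sequences already constructed in the proofs of Propositions \ref{Proposition 3.4} and \ref{Proposition 3.5} to bound the $\mathcal{E}^{\rm{cw}}_{\CX}$-projective and injective dimensions by one. You supply slightly more detail than the paper in explicitly verifying that the end term $(X_2 \st{m}\rt I)$ of the injective resolution is itself injective; one small terminological point is that ``injective hull of $\im(m)$'' is not quite the right phrase in this exact-category setting (the injectives of $\CX$ need not be injective modules), but the left-minimality decomposition you invoke---exactly as in the proof of Proposition \ref{Proposition 3.5}---is what does the job and is available by the Krull--Schmidt property.
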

\begin{proof}
It follows from Proposition \ref{Proposition 3.4} and \ref{Proposition 3.5} that $\CS^{\rm{cw}}_{\CX}(\La)$ has enough projectives and injectives. In addition,
 the proof of the propositions for any object $(X_1 \st{f}\rt Y_1)$ are proved by the existence of  the following short exact sequences in $\CS^{\rm{cw}}_{\CX}(\La)$
{\footnotesize  \[ \xymatrix@R-2pc {&& &  ~ 0\ar[dd]~   & X_1\ar[dd]^{[1~~0]^t}~  & X_1\ar[dd]^{f} &&\\ &&0 \ar[r] &  _{ \ \ \ \   \ \ \                  } \ar[r]^>>>>>>>{0 }_>>>>>>{[1~~f]^t}  &_{\ \ \ \ \ \ \ \ \ \ \  \ \ } \ar[r]^{1}_{[f~~-1]} _{\ \ \ \ \ }&  _{\ \ \ \ \ }\ar[r] & 0& \\&& & X_1  & X_1\oplus X_2 & X_2& }\]}
and
	{\footnotesize  \[ \xymatrix@R-2pc {  &  ~ X_1\ar[dd]^{f}~   & X_1\oplus X_2\ar[dd]^{h}~  & X_2\ar[dd]^{m} \\ 0 \ar[r] &  _{ \ \ \ \ } \ar[r]^{[1~~f]^t }_{[m~~1]^t}  &_{\ \ \ \ \ } \ar[r]^{[-f~~1]}_{[-1~~m]} _{\ \ \ \ \ }&  _{\ \ \ \ \ }\ar[r] & 0 \\ & X_2 & I\oplus X_2 & I }\]}
where  ${h}= \tiny {\left[\begin{array}{ll} mf & 0 \\ 0 & 1 \end{array} \right]}$. The above short exact sequences implies that $\rm{pd}_{\mathcal{E}^{\rm{cw}}_{\CX}}(X_1 \st{f}\rt X_2)$ and $\rm{id}_{\mathcal{E}^{\rm{cw}}_{\CX}}(X_1 \st{f}\rt X_2)$ are at most one. Hence
the exact category
 $\CS^{\rm{cw}}_{\CX}(\La)$ is   hereditary.	
\end{proof}

\subsection{ Case 3}
Let
{\footnotesize  \[ \xymatrix@R-2pc {  &  ~ X_1\ar[dd]^{f}~   & Z_1\ar[dd]^{h}~  & Y_1\ar[dd]^{g} \\ \epsilon: \  \ \  \  0 \ar[r] &  _{ \ \ \ \ } \ar[r]^{\phi_1}_{\phi_2}  &_{\ \ \ \ \ } \ar[r]^{\psi_1}_{\psi_2} _{\ \ \ \ \ }&  _{\ \ \ \ \ }\ar[r] & 0 \\ & X_2 & Z_2 & Y_2 }\]}
be a short exact sequence in $\CE_{\CX}$. Expanding the above sequence in $\mmod \La$, we get the following commutative diagram
	$$\xymatrix{& 0 \ar[d] & 0 \ar[d] & 0 \ar[d]& (*)&\\
	0 \ar[r] & X_1 \ar[d]^f \ar[r]^{\phi_1} & Z_1 \ar[d]^h
	\ar[r]^{\psi_1} & Y_1 \ar[d]^g \ar[r] & 0\\
	0 \ar[r] & X_2\ar[d] \ar[r]^{\phi_2} & Z_2
	\ar[d] \ar[r]^{\psi_2} & Y_2 \ar[d] \ar[r] & 0\\
	0 \ar[r] & \rm{Cok}\ f \ar[d] \ar[r]^{\mu_1} & \rm{Cok} \ h
	\ar[d] \ar[r]^{\mu_2} & \rm{Cok}\ g \ar[d] \ar[r] & 0\\
	& 0  & 0  & 0 & }
$$
in $\mmod \La.$ Denote by $\CE^{\rm{scw}}_{\CX}$, the class of all those short exact sequences $\epsilon$ in $\CE_{\CX}$ such that   all the rows in their induced diagram $(*)$ in $\mmod \La$  are split. Clearly, $\CE^{\rm{scw}}_{\CX} \subseteq \CE^{\rm{cw}}_{\CX}.$
\begin{lemma}\label{Lemma 3.9}
	The class $\mathcal{E}^{\rm{scw}}_{\CX}$, defined in the above, is closed under pull-backs and push-outs taken in $\rm{H}(\La).$
\end{lemma}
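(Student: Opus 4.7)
The approach is to characterize $\CE^{\rm{scw}}_{\CX}$ more concretely: a short exact sequence $\epsilon \in \CE_{\CX}$ belongs to $\CE^{\rm{scw}}_{\CX}$ if and only if $\epsilon$ is split as a short exact sequence in $\rm{H}(\La)$. To establish this, I would start from the observation underlying Lemma \ref{Lemma 3.2}: the top and middle rows of the induced $3\times 3$ diagram being split is equivalent (up to isomorphism of short exact sequences) to $\epsilon$ admitting a matrix presentation with middle term $h = \bigl(\begin{smallmatrix} f & q \\ 0 & g \end{smallmatrix}\bigr)$ for some $q : Y_1 \rt X_2$. A direct computation of $\rm{Cok}(h)$ as a quotient of $X_2 \oplus Y_2$ by $\rm{Im}(h)$ then shows that the cokernel row $0 \rt \rm{Cok}(f) \rt \rm{Cok}(h) \rt \rm{Cok}(g) \rt 0$ splits precisely when there exist $a: Y_1 \rt X_1$ and $b: Y_2 \rt X_2$ with $q = fa + bg$, which is exactly the condition for the epimorphism in $\epsilon$ to admit a section in $\rm{H}(\La)$.

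Granted this characterization, the lemma reduces to two routine facts: split short exact sequences are closed under pullbacks and pushouts in any additive category (verified on the standard model $0 \rt A \rt A \oplus C \rt C \rt 0$), and $\CS_{\CX}(\La)$ is closed under extensions in $\rm{H}(\La)$ because $\CX$ is resolving. Together these imply that both the pullback and the pushout in $\rm{H}(\La)$ of a sequence in $\CE^{\rm{scw}}_{\CX}$ yield a split short exact sequence with all terms in $\CS_{\CX}(\La)$, hence lie in $\CE^{\rm{scw}}_{\CX}$.

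The main technical obstacle I anticipate is verifying this characterization, specifically showing that the splitting of the cokernel row is equivalent to the factorization $q = fa + bg$. A cleaner adjoint-theoretic argument is available for the push-out direction alone: the cokernel functor $\rm{Cok}: \rm{H}(\La) \rt \mmod \La$ is left adjoint to the embedding $M \mapsto (0 \rt M)$, hence preserves pushouts, so the cokernel row of the pushout of $\epsilon$ is itself a pushout of the (split) original cokernel row along $\rm{Cok}(\varphi)$ and is therefore split. The pull-back direction cannot be handled dually, since $\rm{Cok}$ does not preserve limits; there I would fall back on direct computation in the matrix presentation, showing that pulling back along $\phi = (\phi_1, \phi_2): g' \rt g$ yields a middle term $\bigl(\begin{smallmatrix} f & q\phi_1 \\ 0 & g' \end{smallmatrix}\bigr)$, and then verifying $q\phi_1 = f(a\phi_1) + (b\phi_2)g'$ directly from $q = fa + bg$. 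This asymmetry between the two cases is, I expect, the only subtlety of the proof.
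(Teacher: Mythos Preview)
Your central claim—that $\epsilon\in\CE^{\rm{scw}}_{\CX}$ if and only if $\epsilon$ is split in $\rm{H}(\La)$—is false, and the error lies exactly where you anticipated trouble. In the matrix presentation with $h=\bigl(\begin{smallmatrix} f & q\\ 0 & g\end{smallmatrix}\bigr)$, the condition $q=fa+bg$ for some $a,b$ is indeed equivalent to $\epsilon$ splitting in $\rm{H}(\La)$, but it is \emph{strictly stronger} than the cokernel row splitting. A retraction of the cokernel row amounts to a map $\beta:Y_2\to\rm{Cok}\,f$ with $\beta g=-\pi_f q$; to upgrade this to a pair $(a,b)$ as above you must lift $\beta$ through the epimorphism $\pi_f:X_2\to\rm{Cok}\,f$, and this lifting is obstructed by $\Ext^1_{\La}(Y_2,X_1)$. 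Were your characterization correct, $\CE^{\rm{scw}}_{\CX}$ would coincide with the split exact structure and every object of $\CS_{\CX}(\La)$ would be projective, contradicting Proposition~\ref{Proposition 3.4} (and indeed the whole point of introducing $\CS^{\rm{scw}}_{\CX}(\La)$).

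The paper's proof follows Lemma~\ref{Lemma 3.2} verbatim: the pushout (resp.\ pullback) of $\epsilon$ in $\rm{H}(\La)$ induces, on each of the three rows of the $3\times 3$ diagram $(*)$, a pushout (resp.\ pullback) of the corresponding original row, and pushouts/pullbacks of split sequences are split. For the first two rows this is exactly the argument of Lemma~\ref{Lemma 3.2}. For the cokernel row in the pushout direction your adjoint observation is the cleanest justification. For the pullback direction, note that the pullback $\epsilon'$ has the same left term $(X_1\stackrel{f}\to X_2)$ as $\epsilon$, so the induced map of cokernel rows is the identity on $\rm{Cok}\,f$; since both cokernel rows are short exact with the same left term, the right-hand square is automatically a pullback, and hence the new cokernel row is split. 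Alternatively, your explicit matrix computation can be salvaged: with the correct criterion $\pi_f q=-\beta g$ in hand, the pullback has $q'=q\phi_1$, and $\beta'=\beta\phi_2$ gives $\pi_f q'=-\beta'g'$.
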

\begin{proof}
	The proof is the same as Lemma \ref{Lemma 3.2}.
\end{proof}
The same as Corollary \ref{corollary 3.3} we have:
\begin{corollary}
	The class $\mathcal{E}^{\rm{scw}}_{\CX}$ gives an exact structure on the additive category  $\CS_{\CX}(\La)$.
\end{corollary}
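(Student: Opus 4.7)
The plan is to mimic the proof of Corollary \ref{corollary 3.3} by invoking Lemma \ref{Lemma2.1} with the ambient exact structure $\CE_{\CX}$ on $\CS_{\CX}(\La)$ and the subclass $\CE^{\rm{scw}}_{\CX}$. Since $\CE^{\rm{scw}}_{\CX} \subseteq \CE_{\CX}$ by definition, Lemma \ref{Lemma2.1} reduces the task to four checks: $(a)$ $\CE^{\rm{scw}}_{\CX}$ is closed under isomorphisms in $\rm{H}(\La)$; $(b)$ it contains the trivial split sequences (axioms $\rm{E}_0$ and $\rm{E}_0^{\op}$); $(c)$ it is closed under compositions of admissible epimorphisms, and dually of admissible monomorphisms ($\rm{E}_1$ and $\rm{E}_1^{\op}$); and $(d)$ it is closed under push-outs along morphisms in $\CS_{\CX}(\La)$ and pull-backs along them ($\rm{E}_2$ and $\rm{E}_2^{\op}$).

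Point $(a)$ is immediate because the defining diagram $(*)$ transports along an isomorphism of short exact sequences to another diagram whose three rows remain split. Point $(b)$ follows from the fact that any identity inflation/deflation yields a trivial split sequence in $\rm{H}(\La)$ whose three rows in $(*)$ are obviously split, including the row of cokernels. For $(c)$, the argument used for $\CE^{\rm{cw}}_{\CX}$ carries over: the composition of two split epimorphisms is a split epimorphism, so in the composed conflation each row of the top and middle of the induced diagram is split; the third row is then obtained from the Snake Lemma applied to the cokernels and is likewise the composition of two split epimorphisms of $\La$-modules. A dual argument works for $\rm{E}_1^{\op}$.

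Point $(d)$ is precisely the content of Lemma \ref{Lemma 3.9}: the push-out of a sequence in $\CE^{\rm{scw}}_{\CX}$ along a morphism $\varphi=(\varphi_1,\varphi_2)$ produces a diagram of short exact sequences in $\mmod \La$ on each component, and a diagram chase on cokernels, together with the fact that a push-out of a split short exact sequence is split, ensures that all three rows of the new $(*)$-diagram remain split; the pull-back case is dual. Assembling $(a)$--$(d)$ and invoking Lemma \ref{Lemma2.1} delivers the corollary.

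The only mildly delicate point, and thus the main place to be careful rather than a genuine obstacle, is the verification that the induced row on cokernels in $(c)$ remains split after composition. This is not addressed in the parallel proof of Corollary \ref{corollary 3.3} because $\CE^{\rm{cw}}_{\CX}$ does not demand that row to be split. I would therefore write out that one chase explicitly, noting that if $\mu_2:\Coker\, h\to \Coker\, g$ and $\mu_2':\Coker\, h'\to \Coker\, g'$ are both split epimorphisms of $\La$-modules, their composite (which is the cokernel map induced by the composition of the original deflations) is again split, and similarly for the kernels on the monomorphism side. Everything else is a routine transcription of the proof of Corollary \ref{corollary 3.3}.
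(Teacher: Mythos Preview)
Your proposal is correct and follows essentially the same approach as the paper, which simply states that the argument is ``the same as Corollary \ref{corollary 3.3}'' and invokes Lemma \ref{Lemma 3.9} together with Lemma \ref{Lemma2.1}. You have in fact been more careful than the paper by explicitly isolating the one extra verification needed for $\CE^{\rm{scw}}_{\CX}$ versus $\CE^{\rm{cw}}_{\CX}$, namely that the induced cokernel row remains split under composition of deflations; your justification that the induced map on cokernels is the composition of two split epimorphisms is exactly the right one.
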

Let   $\CS^{\rm{scw}}_{\CX}(\La)$ denote the exact category $(\CS_{\CX}(\La), \mathcal{E}^{\rm{scw}}_{\CX})$. For the case $\CX=\mmod \La$, we usually write only $\CS^{\rm{scw}}(\La)$.
\begin{remark}\label{RemarkSToF}
	In analog with the three  types of exact structures defined in the above on the additive category $\CS_{\CX}(\La)$,  one can also define three  types exact structures $\CF_{\CX}(\La), \CF^{\rm{cw}}_{\CX}(\La), \CF^{\rm{s
			cw}}_{\CX}(\La)$  on the additive category $\CF_{\CX}(\La)$, respectively. The kernel and cokernel functors, denoted by $\CK\rm{er}$ and $\mathcal{C}\rm{ok}$ respectively,  induce  mutually inverse equivalences between additive categories $\CS_{\CX}(\La)$ and $\CF_{\CX}(\La)$. By the Snake  lemma, it can be seen that these two equivalences are exact functors, whenever $\CS_{\CX}(\La)$ and $\CF_{\CX}(\La)$ regarded as exact categories   induced by the abelian structure of the morphism category $\rm{H}(\La)$. Also, these two equivalences preserve the  exact structures in the exact categories $\CS_{\CX}^{\rm{scw}}(\La)$ and $ \CF^{\rm{s
			cw}}_{\CX}(\La)$. Hence $\CK\rm{er}$ and $\mathcal{C}\rm{ok}$ are mutually inverse equivalences of of the first and third types of the exact categories on $\CS_{\CX}(\La)$ and $\CF_{\CX}(\La)$.
\end{remark}

Let $M$ be a module in $\mmod \La$ and $P_M \st{\pi_M}\rt M$ the projective cover of $M$ in $\mmod \La.$ Set $\Omega_{\La}(M)$ the kernel of $\pi_M$ and $\Omega_{\La}(M)\hookrightarrow P_M$ the canonical inclusion.
\begin{proposition}\label{Proposition 3.4}
	Let $(X_1 \st{f} \rt X_2)$ be an object in $\CS_{\CX}(\La)$. Then $f$ is an indecomposable projective   object in the exact category $\CS^{\rm{scw}}_{\CX}(\La)$ if and only if it is isomorphic to either of the following objects
	\begin{itemize}
		\item [$(1)$] $X \st{1}\rt X $ for some  indecomposable object $X$ in $\CX;$
		\item[$(2)$] $0 \rt X$ for some  indecomposable  object $X$ in $\CX$;
		\item [$(3)$] $\Omega_{\La}(X)\hookrightarrow P_X$ for some indecomposable  object $X$ in $\CX$.
	\end{itemize}
	Moreover, $\CS^{\rm{scw}}_{\CX}(\La)$ has enough projectives.
\end{proposition}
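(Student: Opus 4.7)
The plan is to follow the template of the analogous proposition for $\CS^{\rm{cw}}_{\CX}(\La)$ (Case~$2$), while exploiting the additional splitness of the cokernel row built into $\CE^{\rm{scw}}_{\CX}$. Indecomposability of types $(1)$ and $(2)$ is the endomorphism-ring computation already carried out in Case~$2$. For type $(3)$, I would observe that the natural map $\End_{\rm{H}(\La)}(\Omega_{\La}(X)\hookrightarrow P_X)\rt \End_{\La}(P_X)$, $(a,b)\mapsto b$, sends idempotents to idempotents; since $P_X$ is the projective cover of the indecomposable $X$, $\End_{\La}(P_X)$ is local, and injectivity of $\iota\colon \Omega_{\La}(X)\hookrightarrow P_X$ forces any idempotent $(a,b)$ to equal $(0,0)$ or $(1,1)$.

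For projectivity, types $(1)$ and $(2)$ inherit it from $\CS^{\rm{cw}}_{\CX}(\La)$ via the inclusion $\CE^{\rm{scw}}_{\CX}\subseteq\CE^{\rm{cw}}_{\CX}$. The new case is $(3)$. Given an admissible epimorphism $(Z_1\st{h}\rt Z_2)\twoheadrightarrow (Y_1\st{g}\rt Y_2)$ in $\CE^{\rm{scw}}_{\CX}$ with kernel $(X_1\st{f}\rt X_2)$ and a morphism $(a,b)\colon (\Omega_{\La}(X)\hookrightarrow P_X)\rt (Y_1\st{g}\rt Y_2)$, I would build the lift as follows. Using the section $s\colon \rm{Cok}\ g\rt \rm{Cok}\ h$ of the split cokernel row together with projectivity of $P_X$ in $\mmod \La$, produce $b_0\colon P_X\rt Z_2$ with $p_h b_0 = s\bar c\pi_X$, where $\bar c\colon X\rt \rm{Cok}\ g$ is induced by $b$. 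Since $g$ is monic and $p_g(\psi_2 b_0 - b)=0$, there is a unique $e\colon P_X\rt Y_1$ with $\psi_2 b_0 - b = ge$; set $b':=b_0 - hr_1 e$, where $r_1$ is the section of the split top row. Then $\psi_2 b' = b$ and, crucially, $p_h b'\iota = 0$. Lift $a$ to $a_0 := r_1 a$; the defect $ha_0 - b'\iota$ vanishes under $\psi_2$ and so equals $\phi_2\gamma$ for a unique $\gamma\colon \Omega_{\La}(X)\rt X_2$. Its image in $\rm{Cok}\ f$ is mapped to zero in $\rm{Cok}\ h$ by the injective morphism $\mu_1$, forcing $\gamma = f\delta$. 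Then $a' := a_0 - \phi_1\delta$ satisfies $\psi_1 a' = a$ and $ha' = b'\iota$, completing the lift.

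For enough projectives and the converse, I would construct for any $(X_1\st{f}\rt X_2)$ an admissible epimorphism in $\CE^{\rm{scw}}_{\CX}$ from a direct sum of types $(1)$, $(2)$, $(3)$. With $C := \rm{Cok}\ f\in\CX$, take the projective cover $\pi_C\colon P_C\rt C$ (so $\Omega_{\La}(C)\in\CX$ by resolvability), lift the quotient $X_2\twoheadrightarrow C$ through $\pi_C$ using projectivity of $P_C$ to obtain $\tilde\pi\colon P_C\rt X_2$, and let $\tilde\iota\colon\Omega_{\La}(C)\rt X_1$ be its restriction (well-defined since $\tilde\pi(\Omega_{\La}(C))\subseteq f(X_1)$). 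The morphism
\[
(X_1\st{1}\rt X_1)\oplus (0\rt X_2)\oplus (\Omega_{\La}(C)\hookrightarrow P_C) \lrt (X_1\st{f}\rt X_2)
\]
with components $(1,f)$, $(0,1_{X_2})$, and $(\tilde\iota,\tilde\pi)$ is an admissible epimorphism in $\CE_{\CX}$; inspection of its $3\times 3$ diagram shows the domain row retracts via $(a,0,\omega)\mapsto a$, the codomain row retracts via $b\mapsto (0,b,0)$, and the cokernel row, which reduces to $0\rt W\rt X_2\oplus C\rt C\rt 0$ for the natural extension $W$ of $C$ by $X_1$, splits via the inclusion of the $C$ summand. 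Thus the admissible epi lies in $\CE^{\rm{scw}}_{\CX}$. This gives enough projectives, and when $(X_1\st{f}\rt X_2)$ is itself an indecomposable projective, this epi splits, so Krull--Schmidt identifies $(X_1\st{f}\rt X_2)$ with one of the three listed indecomposable summands.

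The main obstacle is the projectivity of type $(3)$: without the cokernel row being split, one cannot arrange $p_h b'\iota = 0$, and consequently the $\mu_1$-injectivity step that forces $\gamma$ to factor through $f$ becomes unavailable. This is precisely why $(\Omega_{\La}(X)\hookrightarrow P_X)$ appears as a new projective here but does not feature in the weaker $\rm{cw}$ exact structure.
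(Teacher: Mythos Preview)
Your proof is correct and complete, but the route differs substantially from the paper's.

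For the projectivity of type $(3)$, the paper does not perform a direct lifting computation. Instead it invokes Remark~\ref{RemarkSToF}: the cokernel functor sends a conflation in $\CE^{\rm{scw}}_{\CX}$ to a conflation in the componentwise-split structure $\CF^{\rm{cw}}_{\CX}(\La)$ on the epimorphism category, and sends $(\Omega_{\La}(X)\hookrightarrow P_X)$ to $(P_X\st{\pi_X}\rt X)$. The latter is projective in $\CF^{\rm{cw}}_{\CX}(\La)$ by the dual of Proposition~\ref{Proposition 3.5}(2); applying the kernel functor to the resulting factorization yields the lift in $\CS^{\rm{scw}}_{\CX}(\La)$. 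Your argument replaces this duality trick by an explicit chase, building $b'$ so that $p_hb'\iota=0$ and then using injectivity of $\mu_1$ to force $\gamma$ through $f$. This is more elementary and makes transparent exactly where the split cokernel row is used, at the cost of more bookkeeping.

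For enough projectives and the converse, the paper works in the functor category: it applies $\Psi_{\CX}$ to $(X_1\st{f}\rt X_2)$, takes a projective resolution of the kernel of $(-,\underline{\rm{Cok}\,f})\rt F$ in $\mmod\CX$, runs the horseshoe lemma, and compares the resulting resolution with the minimal projective resolution of $(-,\underline{\rm{Cok}\,f})$ to read off that the middle term decomposes as $(\Omega_{\La}(\rm{Cok}\,f)\rt P_{\rm{Cok}\,f})\oplus(0\rt M)\oplus(N\st{1}\rt N)$. Your approach writes down the admissible epimorphism from $(X_1\st{1}\rt X_1)\oplus(0\rt X_2)\oplus(\Omega_{\La}(C)\hookrightarrow P_C)$ directly and verifies the three row-splittings by hand. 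This avoids the functor-category machinery entirely and is self-contained, whereas the paper's argument foreshadows the role of $\Psi_{\CX}$ that becomes central in Sections~4 and~6.
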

\begin{proof} First note that an object in $\CS_{\CX}(\La)$ which is isomorphic to the one given in $(3)$ of the  statement is indecomposable. It follows by  the indecomposability of $X$ and $\pi_X$ being projective cover. The indecomposability of the other cases $(1)$ and $(2)$ have been discussed before. These two cases are also projective in the $\CS^{\rm{scw}}_{\CX}(\La)$ since $\mathcal{E}^{\rm{scw}}_{\CX} \subseteq \mathcal{E}^{\rm{cw}}_{\CX}$. To prove the case $(3)$ Remark \ref{RemarkSToF} comes to play as follows. In the dual way of Proposition \ref{Proposition 3.5}(2), one can prove $P \st{\pi_X}\rt X$ is projective in the exact category $\CF^{\rm{cw}}_{\CX}(\La)$. Let $\epsilon$ be an arbitrary short  exact sequence in $\mathcal{E}^{\rm{scw}}_{\CX}$ and $\phi$ an arbitrary morphism from $\Omega_{\La}(X)\hookrightarrow P_X$ to the end term of the $\epsilon.$ Applying the Cokernel functor on $\epsilon$ and $\phi$ gives the  exact sequence $\epsilon'$ in $\CF^{\rm{cw}}_{\CX}(\La)$ and the morphism $\phi'$ from $P \st{\pi_X}\rt X$ to the end term of $\epsilon'$. As stated $P \st{\pi_X}\rt X$ is an projective object in the exact category $\CF^{\rm{cw}}_{\CX}(\La)$, so $\phi'$ factors through the epimorphism included in the $\epsilon'$. Then by applying the Kernel  functor on this factorization we get the desired factorization in $\CS^{\rm{scw}}_{\CX}(\La)$. This completes the proof of the ``if'' part. For the converse part, let $(X_1 \st{f} \rt X_2)$ be an indecomposable projective  object in $\CS^{\rm{scw}}_{\CX}(\La)$. By applying the functor $\Psi_{\CX}$ on $(X_1 \st{f} \rt X_2)$ we will get the functor $F$ in $\mmod \underline{\CX}$ as fitted  in the following exact sequence in $\mmod \CX$
	$$0 \rt (-, X_1)\rt (-, X_2) \rt (-, \rm{Cok} \ f) \rt F \rt 0.$$
	The above sequence induces the short exact sequence $0\rt K \rt (-, \underline{\rm{Cok}\ f}) \rt F \rt 0$ in $\mmod \underline{\CX}$. Since $K$ is in $\mmod \underline{\CX}$ then there is a projective resolution in $\mmod \CX$ as the following
	$$0 \rt (-, Y_1)\rt (-, Y_2) \rt (-,Y_3) \rt K \rt 0.$$

 By the horseshoe lemma  we have the following  commutative diagram in $\mmod \CX$ with exact rows and the first three columns (from the left side hand) splitting:
$$\xymatrix{& 0 \ar[d] & 0 \ar[d] & 0 \ar[d]& 0\ar[d]& &\\
	0 \ar[r] & (-, Y_1) \ar[d] \ar[r] & (-, Y_2) \ar[d]
	\ar[r] &(-, Y_3) \ar[d] \ar[r] & K\ar[r]\ar[d]&0\\
	0 \ar[r] & (-, X_1\oplus Y_1)  \ar[r]^{(-,h)}\ar[d] & (-, X_2\oplus Y_2)\ar[d]
	\ar[r] & (-, \rm{Cok} \ f \oplus Y_3) \ar[r]\ar[d] & (-, \underline{\rm{Cok} f} \ )\ar[r]\ar[d]& 0\\0\ar[r] &(-, X_1)\ar[r]^{(-, f)}\ar[d]&(-,  X_2)\ar[r]\ar[d]&(-, \rm{Cok} \ f)\ar[d] \ar[r]& F \ar[r]\ar[d]&0 &\\ &0&0&0&0&&
} 	 $$	
By the above diagram we have the middle row plays a projective resolution of $(-, \underline{\rm{Cok} \ f})$	in $\mmod \CX$. On the other hand,  we know minimal projective resolution of $(-, \underline{\rm{Cok} \ f})$	in $\mmod \CX$ is as the following
$$0 \rt (-, \Omega_{\La}(\rm{Cok} \ f)) \rt (-, P_{\rm{Cok} \ f})\rt (-, \rm{Cok} \ f)\rt(-, \underline{\rm{Cok} \ f})\rt  0.$$
Thus, the deleted projective resolution appeared in the middle row of the above diagram is the direct sum of the deleted minimal one and some contractible complexes. Thus, we can write the object $(X_1\oplus Y_1 \st{h}\rt X_2\oplus Y_2)$ in $\CS_{\CX}(\La)$ as $(\Omega_{\La}(\rm{Cok} \ f) \rt P_{\rm{Cok} \ f})\oplus (0 \rt M)\oplus (N \st{1}\rt N)$ for some $M$ and $N$ in $\CX$. This observation follows the middle term of the short exact sequence in $\CE^{\rm{scw}}_{\CX}$ ending at the object $f$, which is  induced by the above diagram, and then using the Yoneda lemma, is isomorphic to  a direct sum of the indecomposable objects in the three types as presented in the assertion.  But since $f$ is projective by our assumption then it must be a direct summand of the middle term, so we are done.	
\end{proof}
For a module $M$ in $\mmod \La$, $\Ext^1_{\La}(-, M)|_{\CX}$  denotes  the restriction of the functor $\Ext^1_{\La}(-, M)$ in $\mmod \mmod \La$ on the subcategory $\CX.$ 
\begin{lemma}\label{Lemma 3.19}
Assume that
the exact category $\CX$ has enough injectives. For any $X \in \CX$, $\Ext^1_{\La}(-, X)|_{\CX}$ is an injective object in $\mmod \underline{\CX}$. Also if $F$ is in $\mmod \underline{\CX}$, then there is a monomorphism $0 \rt F \rt \Ext^1_{\La}(-, X)|_{\CX}$ for some $X$ in $\CX.$
\end{lemma}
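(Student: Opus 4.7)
The plan is to prove the second assertion first, since the short-exact-sequence presentation it uses supplies the resolution needed for the first. Given $F \in \mmod \underline{\CX}$, contravariant finiteness of $\CX$ gives a projective presentation $(-, Y_1)|_{\CX} \to (-, Y_0)|_{\CX} \to F \to 0$ in $\mmod \CX$; since $F$ vanishes on projectives, evaluating at $P = \La$ forces $Y_1 \to Y_0$ to be a surjection in $\mmod \La$, and the closure of $\CX$ under kernels of epimorphisms places the kernel $A$ back in $\CX$. This produces a conflation $0 \to A \to Y_1 \to Y_0 \to 0$ in $\CX$ fitting into an exact sequence
\[ 0 \to (-, A)|_{\CX} \to (-, Y_1)|_{\CX} \to (-, Y_0)|_{\CX} \to F \to 0 \]
in $\mmod \CX$. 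Applying $\Hom_{\La}(Z, -)$ to the conflation for each $Z \in \CX$ and tracing the connecting map identifies $F(Z) = (Z, Y_0)/\mathrm{image}\,(Z, Y_1)$ with the image of $(Z, Y_0) \to \Ext^1_{\La}(Z, A)$, so this yields a natural monomorphism $F \hookrightarrow \Ext^1_{\La}(-, A)|_{\CX}$.

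For the first assertion I would begin by checking that $\Ext^1_{\La}(-, X)|_{\CX}$ is indeed in $\mmod \underline{\CX}$. Using the hypothesis that the exact category $\CX$ has enough injectives, pick a conflation $0 \to X \to I \to Y \to 0$ in $\CX$ with $I$ injective. Because $\CX$ is extension-closed in $\mmod \La$, $\Ext^1_{\La}(Z, I)$ agrees with the extension group computed in the exact category $\CX$ for every $Z \in \CX$, and the latter vanishes by injectivity of $I$. The long exact sequence of $\Hom_{\La}(Z, -)$ therefore collapses to a presentation $(-, I)|_{\CX} \to (-, Y)|_{\CX} \to \Ext^1_{\La}(-, X)|_{\CX} \to 0$ in $\mmod \CX$, so the functor is finitely presented, and it vanishes on projectives by direct inspection.

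The crucial reduction for injectivity is the observation that the fully faithful embedding $\mmod \underline{\CX} \hookrightarrow \mmod \CX$ realises $\mmod \underline{\CX}$ as an abelian subcategory closed under extensions: if the outer terms of a short exact sequence in $\mmod \CX$ vanish on projectives, so does the middle term. Consequently $\Ext^1_{\mmod \underline{\CX}}$ coincides with $\Ext^1_{\mmod \CX}$ on $\mmod \underline{\CX}$. Applying $\Hom_{\mmod \CX}(-, \Ext^1_{\La}(-, X)|_{\CX})$ to the length-two projective resolution of $F$ built above and using the Yoneda lemma reduces the computation of $\Ext^1_{\mmod \CX}(F, \Ext^1_{\La}(-, X)|_{\CX})$ to the cohomology at the middle spot of
\[ \Ext^1_{\La}(Y_0, X) \to \Ext^1_{\La}(Y_1, X) \to \Ext^1_{\La}(A, X). \]
This vanishes because it is precisely the exact portion of the long sequence $\Ext^{\bullet}_{\La}(-, X)$ associated with the conflation $0 \to A \to Y_1 \to Y_0 \to 0$.

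The main subtlety I expect is the legitimacy of passing the Ext computation between $\mmod \underline{\CX}$ and $\mmod \CX$; once one records that the inclusion is abelian and extension-closed, the two long exact sequences (the Yoneda-reduced one coming from the projective resolution and the classical $\Ext^{\bullet}(-, X)$-sequence of the conflation in $\CX$) align precisely, and both parts of the lemma follow.
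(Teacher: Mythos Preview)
Your proof is correct and is essentially a self-contained version of what the paper invokes by citation. The paper only verifies that $\Ext^1_{\La}(-,X)|_{\CX}$ lies in $\mmod\underline{\CX}$ (via the conflation $0\to X\to I\to L\to 0$ with $I$ injective in $\CX$, exactly as you do) and then refers the reader to Auslander's \emph{Coherent functors}, Section~4, for both the injectivity and the embedding $F\hookrightarrow \Ext^1_{\La}(-,A)|_{\CX}$. What you have written is precisely Auslander's argument specialised to the present setting: build the length-two projective resolution $0\to(-,A)\to(-,Y_1)\to(-,Y_0)\to F\to 0$ in $\mmod\CX$ from the conflation $0\to A\to Y_1\to Y_0\to 0$, identify $F$ with a subfunctor of $\Ext^1_{\La}(-,A)|_{\CX}$ via the connecting map, and compute $\Ext^1_{\mmod\CX}(F,\Ext^1_{\La}(-,X)|_{\CX})$ by Yoneda as the middle cohomology of $\Ext^1_{\La}(Y_0,X)\to\Ext^1_{\La}(Y_1,X)\to\Ext^1_{\La}(A,X)$, which vanishes by the long exact sequence. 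Your remark that the extension-closed full exact embedding $\mmod\underline{\CX}\hookrightarrow\mmod\CX$ forces the two $\Ext^1$ groups to agree is the one point that genuinely needs saying here and is handled correctly.
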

\begin{proof}
	
Let $X$ be in $\CX$. Since $\CX$ has enough injectives then there is a short exact sequence $0 \rt X \rt I \rt L \rt 0$ in $\CX$ with $I$ an injective object in the exact category $\CX$. This short exact sequence induces the following exact sequence in $\mmod \CX,$
$$0 \rt (-, X)\rt (-, I)\rt (-, L)\rt \Ext^1_{\La}(-, X)|_{\CX}\rt 0.$$
The above sequence says $\Ext^1_{\La}(-, X)|_{\CX}$ lies in $\mmod \CX$, and equivalently in $\mmod \underline{\CX}$. The proof of the other assertions is the same as corresponding results given in \cite[ section 4]{A}.	
\end{proof}
The next result follows by  the dual of
argument given in Proposition \ref{Proposition 3.4} and in view of  Lemma \ref{Lemma 3.19}.
\begin{proposition}\label{Proposition 3.15}
	Assume that
	the exact category $\CX$ has enough injectives.
	Let $(X_1 \st{f} \rt X_2)$ be an object in $\CS^{\rm{scw}}_{\CX}(\La)$. Then $f$ is an indecomposable
	injective  object in the exact category $\CS^{\rm{scw}}_{\CX}(\La)$ if and only if it is isomorphic to either of the following objects
	\begin{itemize}
		\item [$(1)$] $X \st{1}\rt X $ for some indecomposable object $X$ in $\CX;$
		\item [$(2)$] 		
		$X \st{l}\rt I$ for some indecomposable object $X$ in $\CX,$ where $l$ is a  left minimal  monomorphism and $I $ an injective object in the exact category $\CX$;
		\item [$(3)$] 	$0 \rt X$ for some indecomposable  object  in the exact category $\CX$.
	\end{itemize}
Moreover, $\CS^{\rm{scw}}_{\CX}(\La)$ has enough injectives.
\end{proposition}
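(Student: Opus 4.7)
I would mirror the proof of Proposition \ref{Proposition 3.4}, using Lemma \ref{Lemma 3.19} in the role that minimal projective resolutions of $(-, \underline{\rm{Cok} \ f})$ in $\mmod \CX$ played there. The argument splits into an ``if'' part (exhibiting each listed form as an indecomposable injective of $\CS^{\rm{scw}}_{\CX}(\La)$) and an ``only if'' part (showing that an arbitrary indecomposable injective must take one of those forms).

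For the ``if'' direction: since $\mathcal{E}^{\rm{scw}}_{\CX} \subseteq \mathcal{E}^{\rm{cw}}_{\CX}$, any object injective in $\CS^{\rm{cw}}_{\CX}(\La)$ is automatically injective in $\CS^{\rm{scw}}_{\CX}(\La)$, so Proposition \ref{Proposition 3.5} immediately delivers types $(1)$ and $(2)$. The new type $(3)$, $(0 \rt X)$ for an arbitrary indecomposable $X \in \CX$, requires a direct check: given a conflation in $\mathcal{E}^{\rm{scw}}_{\CX}$ expanded as the diagram $(*)$ and a morphism from $(X_1 \st{f}\rt X_2)$ to $(0 \rt X)$ (equivalently, a map $\rm{Cok} \ f \rt X$), I would use the splitting of the bottom row $0 \rt \rm{Cok} \ f \rt \rm{Cok} \ h \rt \rm{Cok} \ g \rt 0$ of $(*)$, which is precisely the extra condition distinguishing $\mathcal{E}^{\rm{scw}}_{\CX}$ from $\mathcal{E}^{\rm{cw}}_{\CX}$, to extend the map to $\rm{Cok} \ h \rt X$; pulling this back to $Z_2$ and remembering that the middle column has $0$ as its top produces the required factorization. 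Indecomposability of all three forms follows because their endomorphism rings are isomorphic to the local rings $\End_{\La}(X)$ or $\End_{\La}(I)$.

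For the ``only if'' direction: given an indecomposable injective $(X_1 \st{f}\rt X_2)$, I would construct a conflation in $\mathcal{E}^{\rm{scw}}_{\CX}$
$$0 \rt (X_1 \st{f}\rt X_2) \rt (A_1 \st{a}\rt A_2) \rt (B_1 \st{b}\rt B_2) \rt 0$$
whose middle term $(A_1 \st{a}\rt A_2)$ decomposes as a finite direct sum of objects of types $(1)$--$(3)$. Injectivity of $f$ forces the sequence to split, making $f$ a direct summand of $(A_1 \st{a}\rt A_2)$; Krull--Schmidt for $\CS_{\CX}(\La)$ then forces $f$ to be isomorphic to one of the three forms. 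The middle term is assembled by applying $\Psi_{\CX}$ to get $F \in \mmod \underline{\CX}$ fitting in $0 \rt (-, X_1) \rt (-, X_2) \rt (-, \rm{Cok} \ f) \rt F \rt 0$; using Lemma \ref{Lemma 3.19} to embed $F$ into $\Ext^1_{\La}(-, X)|_{\CX}$ for a suitable $X \in \CX$, presented by a short exact sequence $0 \rt X \rt I \rt L \rt 0$ in $\CX$ with $I$ injective in the exact category $\CX$; and then running the horseshoe lemma dually on these two sequences in $\mmod \CX$, after which the Yoneda lemma translates the resulting four-term resolution back into an object of $\CS_{\CX}(\La)$ and yields the required conflation.

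The main obstacle will be the bookkeeping of this horseshoe step, namely guaranteeing that the middle term decomposes \emph{exactly} as a direct sum of types $(1)$, $(2)$, and the new type $(3)$, with no unwanted summands. In Proposition \ref{Proposition 3.4} the cleanness came from the minimal projective resolution of $(-, \underline{\rm{Cok} \ f})$; here the analogous cleanness should come from choosing $F \hookrightarrow \Ext^1_{\La}(-, X)|_{\CX}$ to be a left minimal embedding (available since $\mmod \underline{\CX}$ is Krull--Schmidt abelian with injective envelopes) and choosing $X \hookrightarrow I$ to be a minimal injective envelope in $\CX$. The summands of $X$ will produce the new type $(3)$ copies $(0 \rt X)$; the summands of $I$ together with the left minimal monomorphism $X \rt I$ produce the type $(2)$ pieces; and the contractible complexes absorbed by the horseshoe account for the identities $Y \st{1}\rt Y$ of type $(1)$. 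The last assertion on the existence of enough injectives is then a direct consequence of the constructed conflation.
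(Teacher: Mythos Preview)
Your proposal is correct and follows precisely the route the paper indicates: the dual of the argument for Proposition~\ref{Proposition 3.4}, with Lemma~\ref{Lemma 3.19} supplying the embedding $F\hookrightarrow \Ext^1_{\La}(-,X)|_{\CX}$ in place of the surjection $(-,\underline{\rm{Cok}\,f})\twoheadrightarrow F$, and the horseshoe comparison carried out exactly as there. One small remark: for type~(2) the endomorphism ring of $(X\stackrel{l}{\to} I)$ is not literally $\End_{\La}(X)$, but indecomposability still holds by the left-minimality argument already given in Proposition~\ref{Proposition 3.5}.
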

The characterization of projective, or injective, objects in the exact categories $\CS_{\CX}(\La), \CS^{\rm{cw}}_{\CX}(\La)$ and $\CS_{\CX}^{\rm{scw}}(\La)$, as proved in this section, implies that they are different  as  exact categories, whenever $\La$ is assumed  not to be semisimple.\\

As an application of our classification of projetive objects and injective objects we can see the exact category $\CS^{\rm{scw}}_{\CX}(\CX)$ is Frobenius whenever the canonical exact category $\CX$ is so. Let us state this observation as the following result.
\begin{corollary}\label{Corollay 3.6Frobenius}
	Assume that the exact category $\CX$ has enough injective.  If the exact category $\CX$ is Frobenius, then  so is $\CS_{\CX}^{\rm{scw}}(\La)$.	
	\end{corollary}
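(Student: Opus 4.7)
The plan is to deduce the Frobenius property of $\CS^{\rm{scw}}_\CX(\La)$ by matching, under the hypothesis that $\CX$ itself is Frobenius, the classifications of indecomposable projectives and indecomposable injectives established in Proposition \ref{Proposition 3.4} and Proposition \ref{Proposition 3.15}. These propositions already supply that $\CS^{\rm{scw}}_\CX(\La)$ has enough projectives and enough injectives, so the only point remaining is the equality $\CP(\CS^{\rm{scw}}_\CX(\La)) = \CI(\CS^{\rm{scw}}_\CX(\La))$; because the category is Krull--Schmidt, it suffices to match the two lists of indecomposable objects.

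The type $(X \xrightarrow{1} X)$ and the type $(0 \to X)$ occur on both sides for every indecomposable $X \in \CX$, so the bulk of the work is to identify the projectives of the form $\Omega_\La(X) \hookrightarrow P_X$ with the injectives of the form $Y \xrightarrow{l} I$ (left minimal monomorphism into an injective of $\CX$). The Frobenius hypothesis yields $\CI(\CX) = \CP(\CX) = \prj \La$, so $P_X$ is injective in $\CX$; to verify that $\iota : \Omega_\La(X) \hookrightarrow P_X$ is left minimal (excluding the degenerate case where $X$ is projective and the object collapses to $(0 \to X)$), the plan is to write any $g \in \End_\La(P_X)$ satisfying $g \iota = \iota$ as $g = 1 + s \pi$, where $\pi: P_X \to X$ is the projective cover and $s: X \to P_X$ comes from factoring $g - 1$ through $\mathrm{Cok}\, \iota = X$. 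Since $X$ is non-projective, the composition $\pi \circ s : X \to X$ cannot be an isomorphism (otherwise $s$ would split $\pi$), so $\pi s$ is nilpotent by locality of $\End_\La(X)$; then $(s \pi)^{n+1} = s (\pi s)^n \pi = 0$ for $n$ large, making $g = 1 + s \pi$ an automorphism.

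For the converse direction, a left minimal monomorphism $l: Y \to I$ into an injective $I$ of $\CX$, with $Y$ indecomposable and non-injective, is exactly an injective envelope of $Y$; since $I$ is simultaneously projective in $\CX$ by the Frobenius assumption, the short exact sequence $0 \to Y \to I \to X \to 0$ with $X := \mathrm{Cok}\, l$ is the first step of a projective resolution of $X$. The fact that $\Omega$ and the suspension $\Sigma$ are quasi-inverse autoequivalences of the stable category $\underline{\CX}$ of a Frobenius exact category ensures that $X$ (representing $\Sigma Y$ in $\underline{\CX}$) is indecomposable and that the sequence is the minimal projective resolution, yielding an isomorphism $(Y \xrightarrow{l} I) \cong (\Omega_\La(X) \hookrightarrow P_X)$. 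The main obstacle throughout is this left-minimality analysis together with its converse identification, both of which rest on the Frobenius duality between projective covers and injective envelopes and on the locality of endomorphism rings of indecomposable modules.
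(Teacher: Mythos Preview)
Your approach is exactly the one the paper intends: the corollary is stated there as an immediate application of the classifications in Propositions \ref{Proposition 3.4} and \ref{Proposition 3.15}, with no further argument, and you have supplied the details the paper omits.

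One small point you pass over: to place the indecomposable projective $(\Omega_\La(X)\hookrightarrow P_X)$ into form~(2) of Proposition \ref{Proposition 3.15} you need not only that $P_X$ is injective in $\CX$ and that the inclusion is left minimal, but also that the \emph{domain} $\Omega_\La(X)$ is itself an indecomposable object of $\CX$. This does hold when $\CX$ is Frobenius---the syzygy functor is an autoequivalence of $\underline\CX$, and the minimal syzygy carries no projective summand (any such summand would be injective and split off inside $P_X$, contradicting minimality of the cover)---but it deserves a sentence, since Proposition \ref{Proposition 3.4} only asserts indecomposability of the object $(\Omega_\La(X)\hookrightarrow P_X)$ in $\CS_\CX(\La)$, not of $\Omega_\La(X)$ in $\CX$.
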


\subsection{Some further exact structures}
   Denote by ${\bf Ex}(\CX)$ an additive category which has the short exact sequences in $\CX$, $0 \rt X_1\st{f}\rt X_2 \st{g}\rt X_3\rt 0$,  sometimes $(X_1 \st{f}\rt X_2\st{g}\rt X_3)$ for short, as its objects. A morphism between two short exact sequences $\epsilon$ and $\eta$,  we mean a triplet $(\phi_1, \phi_2, \phi_3)$ satisfying the following commutative diagram
   	$$\xymatrix{
   	\epsilon: \ \ \ \ \ \ 0 \ar[r] &X_1 \ar[r]\ar[d]^{\phi_1} & X_2 \ar[d]^{\phi_2}
   	\ar[r] &X_3 \ar[d]^{\phi_3}\ar[r] & 0&\\ \eta: \ \ \ \ \ \
   	0 \ar[r] & Y_1 \ar[r]^{s_H} & Y_2
   	\ar[r] & Y_3\ar[r] &0.& \\ 	} 	 $$
   We can view the objects in ${\bf Ex}(\CX)$ as a complex concentrated on degree 1,2 and 3. We denote by $\mathcal{C}^{\rm{b}}(\mmod \La)$ the category of bounded complexes over $\mmod \La.$ As $\CX$ is resolving, by viewing ${\bf Ex}(\CX)$ as a subcategory in $\mathcal{C}^{\rm{b}}(\mmod \La)$, so is ${\bf Ex}(\CX)$ a resolving subcategory. Hence an exact structure on ${\bf Ex}(\CX)$ is inherited form the abelian structure of  $\mathcal{C}^{\rm{b}}(\mmod \La)$. In this way we can consider ${\bf Ex}(\CX)$ as an exact category. Throughout the paper, when we call the additive category ${\bf Ex}(\CX)$ as an exact category we mean this induced exact structure. In similar way to $\CS^{\rm{scw}}_{\CX}(\La)$, we can give another exact structure on ${\bf Ex}(\CX)$ component-wisely as follows. Let
   $$\epsilon: \ \ \ \ 0 \rt (X_1\st{f}\rt X_2 \st{g}\rt X_3)\st{(\phi_1, \phi_2, \phi_3)}\lrt (Z_1\st{f'}\rt Z_2 \st{g'}\rt Z_3)\st{(\phi'_1, \phi'_2, \phi'_3)}\lrt (Y_1\st{f''}\rt Y_2 \st{g''}\rt Y_3)\rt 0$$
   be a short exact sequence in the exact category ${\bf Ex}(\CX)$, or equivalently can be viewed as a short exact sequence in  $\mathcal{C}^{\rm{b}}(\mmod \La)$ with terms in ${\bf Ex}(\CX)$.  The above short exact sequence gives us the following commutative diagram in $\mmod \La$

   $$\xymatrix{& 0 \ar[d] & 0 \ar[d] & 0 \ar[d]&(*) &\\
   	0 \ar[r] & X_1 \ar[d]^f \ar[r]^{\phi_1} & Z_1 \ar[d]^{f'}
   	\ar[r]^{\phi'_1} & Y_1 \ar[d]^{f''} \ar[r] & 0\\
   	0 \ar[r] & X_2\ar[d]^g \ar[r]^{\phi_2} & Z_2
   	\ar[d]^{g'} \ar[r]^{\phi'_2} & Y_2 \ar[d]^{g''} \ar[r] & 0\\
   	0 \ar[r] & X_3 \ar[d] \ar[r]^{\phi_3} & Z_3
   	\ar[d] \ar[r]^{\phi'_3} & Y_3 \ar[d] \ar[r] & 0\\
   	& 0  & 0  & 0 & }
   $$
   Let $\mathbf{\CE x}^{\rm{cw}}(\CX)$ denote the  short exact sequences $\epsilon$ such that all rows $0 \rt X_i \st{\phi}\rt Z_i \st{\phi'}\rt Y_i\rt 0$ in diagram $(*)$ are split. In analogy with $\CE^{\rm{cw}}_{\CX}$ or $\CE^{\rm{scw}}_{\CX}$, we can see the class $\mathbf{\CE x}^{\rm{cw}}(\CX)$ satisfies the axioms of the exact categories. Therefore, we have another exact structure  $({\bf Ex}(\CX), \mathbf{\CE x}^{\rm{cw}}(\CX))$, shown by  ${\bf Ex}^{\rm{cw}}(\CX)$. The assignment an object $(X \st{f}\rt Y)$ in $\CS_{\CX}(\La)$ into the object $0 \rt X \st{f}\rt Y\rt \rm{Cok} \ f \rt 0$ in ${\bf Ex}(\CX)$ defines an equivalence between the additive categories $\CS_{\CX}(\La)$ and  ${\bf Ex}(\CX)$. If we consider $\CS_{\CX}(\La)$ and  ${\bf Ex}(\CX)$ with exact structures
     $\CE^{\rm{scw}}_{\CX}$ and $\mathbf{\CE x}^{\rm{cw}}(\CX)$, respectively, then one can see the assignment preserves the exact structures. Thus there are the equivalences of exact categories $\CS_{\CX}(\La)\simeq {\bf Ex}(\CX)$ and $\CS^{\rm{scw}}_{\CX}(\La)\simeq {\bf Ex}^{\rm{cw}}(\CX)$.

    Based on the above equivalences of exact categories, we obtain the following results for the exact categories ${\bf Ex}(\CX)$ and ${\bf Ex}^{\rm{cw}}(\CX)$.

    \begin{proposition}
    	Let ${\bf X}=(X_1 \st{f} \rt X_2  \st{g}\rt  X_3)$ be an object in ${\bf Ex}(\CX)$. Then ${\bf X}$ is an indecomposable projective   object in the exact category  ${\bf Ex}^{\rm{cw}}(\CX)$, resp  ${\bf Ex}(\CX)$,  if and only if it is isomorphic to either of the following objects
    \begin{itemize}
    	\item [$(1)$] $(X \st{1}\rt X\rt 0)$ for some  indecomposable, resp projective,  object $X$ in $\CX;$
    	\item[$(2)$] $(0 \rt X \st{1}\rt X)$ for some  indecomposable, resp. projective,   object $X$ in $\CX$;
    	\item [$(3)$] $(\Omega_{\La}(X)\hookrightarrow P_X \st{\pi_X}\rt X)$ for some indecomposable  object $X$ in $\CX$.
    \end{itemize}
    Moreover, ${\bf Ex}^{\rm{cw}}(\CX)$, resp  ${\bf Ex}(\CX)$,  has enough projectives.	
    \end{proposition}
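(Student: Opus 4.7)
The plan is to deduce the proposition by transporting the already-established classifications in $(\CS_{\CX}(\La),\CE_{\CX})$ and $\CS^{\rm{scw}}_{\CX}(\La)$ along the equivalence $\CS_{\CX}(\La)\simeq {\bf Ex}(\CX)$ that sends $(X\st{f}\rt Y)$ to $(0\rt X\st{f}\rt Y\rt \rm{Cok}\ f\rt 0)$. The paper has already observed that this additive equivalence identifies the exact structure $\CE^{\rm{scw}}_{\CX}$ with $\mathbf{\CE x}^{\rm{cw}}(\CX)$, so the classification of indecomposable projectives in $\CS^{\rm{scw}}_{\CX}(\La)$ established earlier in Section \ref{Section3} will transfer directly to ${\bf Ex}^{\rm{cw}}(\CX)$.

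The first and main technical step is to verify that the very same equivalence also identifies the canonical exact structure $\CE_{\CX}$ on $\CS_{\CX}(\La)$ with the canonical exact structure on ${\bf Ex}(\CX)$ inherited from $\mathcal{C}^{\rm{b}}(\mmod \La)$. In the forward direction, a conflation in $\CE_{\CX}$ produces the $3\times 3$ diagram $(*)$ of Case 3 with exact rows and exact first two columns; the snake lemma then forces the third row (of cokernels) to be exact as well, so the image is a short exact sequence of complexes in $\mathcal{C}^{\rm{b}}(\mmod \La)$. Conversely, a short exact sequence of ${\bf Ex}(\CX)$-objects in $\mathcal{C}^{\rm{b}}(\mmod \La)$ is exact in each degree, so restricting to the first two rows yields a conflation in $\rm{H}(\La)$ with terms in $\CS_{\CX}(\La)$, i.e.\ a member of $\CE_{\CX}$. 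This is the point where I expect to spend the most care; once established, everything else is a formal transport.

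With both exact equivalences in hand, a direct computation of cokernels yields the correspondences
$$(X\st{1}\rt X)\ \longleftrightarrow\ (X\st{1}\rt X\rt 0),\qquad (0\rt X)\ \longleftrightarrow\ (0\rt X\st{1}\rt X),$$
$$(\Omega_{\La}(X)\hookrightarrow P_X)\ \longleftrightarrow\ (\Omega_{\La}(X)\hookrightarrow P_X\st{\pi_X}\rt X).$$
Since additive equivalences preserve indecomposability and equivalences of exact categories preserve projectivity, the classification of indecomposable projectives in $\CS^{\rm{scw}}_{\CX}(\La)$ transfers to give items $(1)$--$(3)$ for ${\bf Ex}^{\rm{cw}}(\CX)$, while Proposition \ref{projective objects in monomorphism} transfers to give items $(1)$ and $(2)$ with $X$ an indecomposable projective in $\CX$ for $({\bf Ex}(\CX),\text{canonical})$; note that in this latter case the type $(3)$ collapses into type $(2)$ since $X$ projective forces $\Omega_{\La}(X)=0$ and $P_X=X$.

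Finally, having enough projectives is invariant under equivalence of exact categories, so the enough-projectives assertions for $(\CS_{\CX}(\La),\CE_{\CX})$ and $\CS^{\rm{scw}}_{\CX}(\La)$ proved in the respective propositions above immediately imply the corresponding assertions for ${\bf Ex}(\CX)$ and ${\bf Ex}^{\rm{cw}}(\CX)$.
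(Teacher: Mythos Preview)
Your approach is exactly the one the paper takes: it states the proposition immediately after establishing the equivalences of exact categories $\CS_{\CX}(\La)\simeq {\bf Ex}(\CX)$ and $\CS^{\rm{scw}}_{\CX}(\La)\simeq {\bf Ex}^{\rm{cw}}(\CX)$, and offers no further argument beyond ``Based on the above equivalences of exact categories, we obtain the following results.'' Your snake-lemma verification that the assignment $(X\st{f}\rt Y)\mapsto (X\st{f}\rt Y\rt \rm{Cok}\,f)$ also matches the canonical structures makes explicit a step the paper leaves to the reader, and the transport of Propositions~\ref{projective objects in monomorphism} and~\ref{Proposition 3.4} along these equivalences is precisely what is intended.
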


\begin{proposition}
	Assume that
	the exact category $\CX$ has enough injectives.
	Let ${\bf X}=(X_1 \st{f} \rt X_2\st{g}\rt X_3)$ be an object in ${\bf Ex}(\CX)$. Then ${\bf X}$ is an indecomposable injective  object in the exact category  ${\bf Ex}^{\rm{cw}}(\CX)$, resp  ${\bf Ex}(\CX)$,  if and only if it is isomorphic to either of the following objects
	\begin{itemize}
		\item [$(1)$] $(X \st{\rm{1}}\rt X\rt 0) $ for some indecomposable, resp. injective,  object $X$ in $\CX;$
		\item [$(2)$]
			 	$(0 \rt X\st{1} \rt X)$ for some indecomposable, resp.  injective, object  in the exact category $\CX$;
		\item [$(3)$]		
		$(X \st{f}\rt I \rt \rm{Cok} \ f)$ for some indecomposable  object $X$ in $\CX,$ where $f$ is a  left minimal  monomorphism and $I $ an injective object in the exact category $\CX$.	
	\end{itemize}
	Moreover, ${\bf Ex}^{\rm{cw}}(\CX)$, resp  ${\bf Ex}(\CX)$, has enough injectives.
\end{proposition}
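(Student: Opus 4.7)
My plan is to deduce the classification directly from the equivalences of exact categories $\CS_{\CX}(\La) \simeq {\bf Ex}(\CX)$ and $\CS^{\rm{scw}}_{\CX}(\La) \simeq {\bf Ex}^{\rm{cw}}(\CX)$ established just before the statement, together with the already-proved characterizations of indecomposable injectives in Propositions~\ref{projective objects in monomorphism}(2) and~\ref{Proposition 3.15}. Recall that the equivalence in question sends a monomorphism $(X \st{f}\rt Y)$ in $\CS_{\CX}(\La)$ to the conflation $(X \st{f}\rt Y \st{g}\rt \rm{Cok}\ f)$ in ${\bf Ex}(\CX)$, with $g$ the canonical projection, and is an equivalence of exact categories.

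The key preliminary remark is that indecomposability, injectivity, and the property of having enough injectives in an exact category are invariants expressed purely in terms of the morphisms and the class of conflations. Consequently any equivalence of exact categories carries indecomposable injective objects to indecomposable injective objects bijectively, and transports ``enough injectives'' between the two sides. Given this, the classification reduces to identifying the images of the known indecomposable injectives on the monomorphism side.

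For ${\bf Ex}^{\rm{cw}}(\CX)$ I would transport the three families from Proposition~\ref{Proposition 3.15}: $(X \st{1}\rt X)$ maps to $(X \st{1}\rt X \rt 0)$, yielding case~(1); $(0 \rt X)$ maps to $(0 \rt X \st{1}\rt X)$, yielding case~(2); and $(X \st{l}\rt I)$, with $l$ a left minimal monomorphism into an injective $I\in\CX$, maps to $(X \st{l}\rt I \rt \rm{Cok}\ l)$, yielding case~(3). For ${\bf Ex}(\CX)$ I would likewise transport the two families from Proposition~\ref{projective objects in monomorphism}(2): $(I \st{1}\rt I)$ and $(0 \rt I)$, with $I$ indecomposable injective in $\CX$, go to the stated cases~(1) and~(2) under the ``resp.'' qualifier. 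The existence of enough injectives on both sides is inherited in the same way from the corresponding assertions in Propositions~\ref{projective objects in monomorphism} and~\ref{Proposition 3.15}.

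The only mild obstacle is to be sure that the transport functor genuinely is exact in both directions, so that it truly preserves injective objects; but because the conflations on each side are described by the same component-wise splitting condition in $\CX$ (or, in the canonical case, by the same termwise-short-exact-in-$\mmod\La$ condition), exactness of the functor in both directions is automatic from its construction, and nothing beyond the bookkeeping above is required.
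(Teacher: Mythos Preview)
Your proposal is correct and follows exactly the approach the paper takes: the paper offers no separate argument for this proposition, deriving it directly from the equivalences of exact categories $\CS_{\CX}(\La)\simeq{\bf Ex}(\CX)$ and $\CS^{\rm{scw}}_{\CX}(\La)\simeq{\bf Ex}^{\rm{cw}}(\CX)$ together with Propositions~\ref{projective objects in monomorphism}(2) and~\ref{Proposition 3.15}. One small point you could make explicit for completeness: in the ${\bf Ex}(\CX)$ case the transported list from Proposition~\ref{projective objects in monomorphism}(2) contains only two families, so item~(3) of the statement is to be read (for the ``resp.'' case) as the degenerate instance where $X$ itself is injective, whence $f$ is an isomorphism and the object coincides with item~(1).
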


\section{ equivalences on the stable categories}
 Recall from Subsection \ref{An equivalence}, and also see \cite[Construction 3.1]{H2} for more details, the functor $\Psi_{\CX}:\CS_{\CX}(\La)\rt \mmod \underline{\CX}$ which is assigned to the $\CX$ making a connection between the monomorphism category and modules (or functors) over the stable categories. Since $\mmod \underline{\CX}$ is an abelian category, hence it is reasonable to ask how $\Psi_{\CX}$ becomes an exact functor. With the first and second types of the  exact structures defined on $\CS_{\CX}(\La)$ in previous section, the functor $\Psi_{\CX}$ is not an exact functor in general.  For instance, let $0 \rt A \st{f}\rt B  \st{g}\rt C\rt 0$ be a non-split  short exact sequence in $\CX$. Then the functor $\Psi_{\CX}$ does not preserve the exactness of the following short exact sequence in $\CS_{\CX}(\La)$ 
 {\footnotesize  \[ \xymatrix@R-2pc {  &  ~ A\ar[dd]^{1}~   & A\ar[dd]^{f}~  & 0\ar[dd] \\  0 \ar[r] &  _{ \ \ \ \ } \ar[r]^{1}_{f}  &_{\ \ \ \ \ } \ar[r]^{0}_{g} _{\ \ \ \ \ }&  _{\ \ \ \ \ }\ar[r] & 0. \\ & A & B & C }\]}
 But, by considering the third type of exact structures on $\CS_{\CX}(\La)$, the functor turn into an exact functor. 
  Therefore,  $\Psi_{\CX}:\CS^{\rm{scw}}_{\CX}(\La)\rt \mmod \underline{\CX}$ means that we consider the functor $\Psi_{\CX}$ as an exact functor from the exact category  $\CS^{\rm{scw}}_{\CX}(\La)$ to $\mmod \underline{\CX}$. By enhancing $\Psi_{\CX}$ with such an exactness, we will obtain some new equivalences. The functor $\Psi_{\CX}$ is a functorial approach and generalization of  one of the two functors studied in \cite{E} and \cite{RZ}. 
 
  Recall that an additive functor $F:\CB \rt \CA$ between two exact categories is called {\it exact} provided that it sends conflations to conflations.
 \begin{theorem}\label{Theoerm 5.1}
 	Let $\CX$ be a resolving and  contravariantly finite subcategory of $\mmod \La,$ being, indeed,  our convention on $\CX$ in this paper. Then
 	\begin{itemize}
 		\item [$(1)$] The functor $\Psi_{\CX}:\CS^{\rm{scw}}_{\CX}(\La)\rt \mmod \underline{\CX}$ is full, dense, objective and exact;
 		\item [$(2)$] The functor $\Psi_{\CX}$ induces an equivalence between the stable categories $\underline{\CS^{\rm{cw}}_{\CX}(\La)}$ and $\mmod \underline{\CX}$.
 	\end{itemize}
 \end{theorem}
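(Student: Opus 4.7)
The plan is to split part (1) into the properties already established in \cite{H2} and the exactness, which is the genuinely new content, and then to deduce part (2) from part (1) together with the classification of projectives in the cw exact structure from Subsection 3.2.

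For part (1), fullness, density, and objectivity are properties of the underlying additive functor $\Psi_{\CX} : \CS_{\CX}(\La) \rt \mmod \underline{\CX}$ and are independent of which exact structure one equips the source with. Hence I would invoke \cite[Theorem 3.2]{H2}, as recalled in Subsection \ref{An equivalence}, directly. Only the exactness requires a proof. To this end, let $\epsilon \in \CE^{\rm{scw}}_{\CX}$ be a conflation
\[
0 \rt (X_1 \st{f}\rt X_2) \rt (Z_1 \st{h}\rt Z_2) \rt (Y_1 \st{g}\rt Y_2) \rt 0
\]
and expand it as the diagram $(*)$ from Subsection 3.3. By the very definition of $\CE^{\rm{scw}}_{\CX}$, all three horizontal rows of $(*)$ — the rows on $X_i, Z_i, Y_i$ and \emph{also} the cokernel row — are split short exact sequences in $\CX$. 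Applying the restricted Yoneda functor $(-,?)|_{\CX}$ to each of these three rows yields three split short exact sequences in $\mmod \CX$. Stacking these with the defining four-term exact sequences $0 \rt (-,X_1) \rt (-,X_2) \rt (-,\rm{Cok}\, f) \rt \Psi_{\CX}(f) \rt 0$ and its analogues for $h$ and $g$ produces a commutative $4 \times 3$ diagram in $\mmod \CX$ with split exact rows in its first three rows and exact columns in all three columns. Splitting each column through the image $K_i$ of $(-,X_{2,i}) \rt (-,\rm{Cok})$ gives two short exact sequences per column; applying the $3\times 3$-lemma once transfers exactness from the first two rows onto the $K_i$-row, and applying it once more transfers exactness from the $K_i$-row and the cokernel row onto the bottom row. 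This produces the required conflation $0 \rt \Psi_{\CX}(f) \rt \Psi_{\CX}(h) \rt \Psi_{\CX}(g) \rt 0$ in $\mmod \CX$, and hence in $\mmod \underline{\CX}$.

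For part (2), the first version of Proposition \ref{Proposition 3.4} (for the cw case) asserts that the indecomposable projectives of the exact category $\CS^{\rm{cw}}_{\CX}(\La)$ are exactly the objects $(X \st{1}\rt X)$ and $(0 \rt X)$ with $X$ indecomposable in $\CX$. Consequently, the ideal of morphisms in $\CS_{\CX}(\La)$ that factor through a projective of $\CS^{\rm{cw}}_{\CX}(\La)$ coincides, on the nose, with the ideal $\CU$ of Subsection \ref{An equivalence}. Combining this identification with the equivalence $\CS_{\CX}(\La)/\CU \simeq \mmod \underline{\CX}$ from \cite[Theorem 3.2]{H2} gives the desired equivalence $\underline{\CS^{\rm{cw}}_{\CX}(\La)} \simeq \mmod \underline{\CX}$ at once.

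The main obstacle is the exactness step in part (1): one must exploit that \emph{all three} rows of $(*)$ are split, not just the first two. This is precisely the feature distinguishing $\CE^{\rm{scw}}_{\CX}$ from the larger class $\CE^{\rm{cw}}_{\CX}$, and it explains why the \emph{smaller} structure $\CE^{\rm{scw}}_{\CX}$ is required to make $\Psi_{\CX}$ exact, even though the classification-of-projectives argument for the stable equivalence in part (2) is carried out with the \emph{larger} structure $\CE^{\rm{cw}}_{\CX}$. Once the splitness of the cokernel row is in hand, the remaining diagram chase is routine.
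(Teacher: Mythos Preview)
Your proposal is correct and follows essentially the same approach as the paper: both invoke \cite[Theorem 3.2]{H2} for fullness, density and objectivity, both verify exactness by applying the restricted Yoneda embedding to the expanded diagram $(*)$ and using splitness of all three rows to get exactness of the first three columns in the resulting $4\times 3$ diagram, and both deduce part (2) by identifying the ideal $\CU$ with the projective-factoring ideal via the classification of indecomposable projectives in $\CS^{\rm{cw}}_{\CX}(\La)$. The only difference is cosmetic: the paper simply asserts that the rightmost column of the Yoneda diagram is exact, whereas you spell out the two applications of the $3\times 3$-lemma that justify this step.
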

 \begin{proof}
By \cite[Theorem 3.2]{H2}, see also subsection \ref{An equivalence}, the functor $\Psi_{\CX}$ is full, dense and objective. Just it remains to check that it is an exact  functor. Let
{\footnotesize  \[ \xymatrix@R-2pc {  &  ~ X_1\ar[dd]^{f}~   & Z_1\ar[dd]^{h}~  & Y_1\ar[dd]^{g} \\ \epsilon: \  \ \  \  0 \ar[r] &  _{ \ \ \ \ } \ar[r]^{\phi_1}_{\phi_2}  &_{\ \ \ \ \ } \ar[r]^{\psi_1}_{\psi_2} _{\ \ \ \ \ }&  _{\ \ \ \ \ }\ar[r] & 0 \\ & X_2 & Z_2 & Y_2 }\]}
be a short exact sequence in $\CE^{\rm{scw}}_{\CX}$.  By definition the expended version of the  above sequence in $\mmod \La$ is the following commutative diagram
$$\xymatrix{& 0 \ar[d] & 0 \ar[d] & 0 \ar[d]& \dagger&\\
	0 \ar[r] & X_1 \ar[d]^f \ar[r]^{\phi_1} & Z_1 \ar[d]^h
	\ar[r]^{\psi_1} & Y_1 \ar[d]^g \ar[r] & 0\\
	0 \ar[r] & X_2\ar[d] \ar[r]^{\phi_2} & Z_2
	\ar[d] \ar[r]^{\psi_2} & Y_2 \ar[d] \ar[r] & 0\\
	0 \ar[r] & \rm{Cok}\ f \ar[d] \ar[r]^{\mu_1} & \rm{Cok} \ h
	\ar[d] \ar[r]^{\mu_2} & \rm{Cok}\ g \ar[d] \ar[r] & 0\\
	& 0  & 0  & 0 & }
$$
in $\mmod \La$ with split rows. By applying the Yoneda functor on the diagram $(\dagger)$ we get the following commutative diagram with exact rows and columns in $\mmod \CX$
$$\xymatrix{& 0 \ar[d] & 0 \ar[d] & 0 \ar[d]& 0\ar[d]&  &\\
	0 \ar[r] & (-,X_1) \ar[d] \ar[r]^{(-, f)} & (-, X_2) \ar[d]
	\ar[r] &(-, \rm{Cok}\ f) \ar[d] \ar[r] & F\ar[r]\ar[d]^f&0\\
	0 \ar[r] & (-,Z_1)  \ar[r]^{(-, h)}\ar[d] & (-, Z_2)\ar[d]
	\ar[r] & (-,  \rm{Cok}\ h) \ar[r]\ar[d] & G\ar[r]\ar[d]^g& 0\\0\ar[r] &(-, Y_1)\ar[r]^{(-, g)}\ar[d]&(-,  Y_2)\ar[r]\ar[d]&(-, \rm{Cok} \ g)\ar[d] \ar[r]&H \ar[r]\ar[d]&0 &\\ &0&0&0&0&&
} 	 $$
The exactness of the first three columns except the rightmost columns comes from the fact that the rows in the diagram $(\dagger)$ are split. The  short exact sequence in the rightmost column is exactly the image of the short exact sequence $\epsilon$ under the functor $\Psi_{\CX}.$ This means that $\Psi_{\CX}$ is an exact functor  when we consider $\CS_{\CX}(\La)$ as an exact category via the exact structure $\CE^{\rm{scw}}_{\CX}$. This completes the statement $(1)$. As stated in  \cite[Theorem 3.2]{H2}, since the functor $\Psi_{\CX}$ is full, dense and objective then it induces an equivalence $\CS_{\CX}(\La)/\CU\simeq \mmod \underline{\CX}$, where $\CU$ is an ideal consisting of morphisms which factor through the direct sum of objects in the form $X\st{1}\rt X$ and $0 \rt X$, for some $X \in \CX.$ By the characterization of the indecomposable projective objects in the exact category $\CS^{\rm{cw}}_{\CX}(\La)$, see Proposition \ref{Proposition 3.4}, one can see only by definition  that for any $A$ and $B$ in $\CS^{\rm{cw}}_{\CX}(\La)$, $\CP_{\CE^{\rm{cw}}}(A, B)=\CU(A, B)$. Hence  $\CS_{\CX}(\La)/\CU\simeq \underline{\CS^{\rm{cw}}_{\CX}(\La)}$. So  the second statement is proved.
 \end{proof}
 We intend to give an another variation of the equivalence $\CS_{\CX}(\La)/\CU\simeq \mmod \underline{\CX}$ in the level of the stable categories.
 \begin{theorem}\label{Stableequivalence1}
 	Keep all the notations as above.  The functor $\Psi_{\CX}$ induces an equivalence between  the stable categories of exact categories $\underline{\CS^{\rm{scw}}_{\CX}(\La)} \simeq \underline{\mmod \underline{\CX}}$. Consequently, there is an equivalence of categories
 	$$\CS_{\CX}(\La)/\CV \simeq \underline{\mmod \underline{\CX}},$$
 where $\CV$ is an ideal generated by the object of the form $X\st{1}\rt X, \  0\rt X$ and $\Omega_{\La}(X)\rt P,$ for some $X$ in $\CX.$ 	
 	 \end{theorem}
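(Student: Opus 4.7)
The plan is to combine Theorem \ref{Theoerm 5.1} with the classification of projective objects in $\CS^{\rm{scw}}_{\CX}(\La)$ given in Proposition \ref{Proposition 3.4} (Case 3), and the standard fact that the stable category of $\mmod \underline{\CX}$ with respect to its abelian exact structure is the quotient by the ideal of morphisms factoring through representable functors $(-,\underline{X})$. The strategy is to show that $\Psi_{\CX}$ sends projective objects to projective objects, thereby inducing a well-defined functor $\underline{\Psi}_{\CX}:\underline{\CS^{\rm{scw}}_{\CX}(\La)}\rt \underline{\mmod \underline{\CX}}$, and then to verify this induced functor is full, dense and faithful, ultimately identifying the stable category $\underline{\CS^{\rm{scw}}_{\CX}(\La)}$ with $\CS_{\CX}(\La)/\CV$.

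First I would compute $\Psi_{\CX}$ on each of the three types of indecomposable projective objects of $\CS^{\rm{scw}}_{\CX}(\La)$ classified in Proposition \ref{Proposition 3.4}. Reading off the defining exact sequence $0\rt(-,A)\rt(-,B)\rt(-,\rm{Cok}\ f)\rt F\rt 0$ immediately gives $\Psi_{\CX}(X\st{1}\rt X)=0$ and $\Psi_{\CX}(0\rt X)=0$. For the remaining type, using that $P_X\rt X$ is the projective cover and $\Omega_{\La}(X)$ is its kernel, the sequence becomes $0\rt(-,\Omega_{\La}(X))\rt(-,P_X)\rt(-,X)\rt F\rt 0$, so $F$ is the image of $(-,X)$ under the identification $\mmod \underline{\CX}\hookrightarrow\mmod \CX$, namely the representable functor $(-,\underline{X})$; this is the generic indecomposable projective in $\mmod \underline{\CX}$. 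Hence every indecomposable projective of $\CS^{\rm{scw}}_{\CX}(\La)$ is sent to a projective of $\mmod \underline{\CX}$ (possibly zero), and conversely every indecomposable projective of $\mmod \underline{\CX}$ is in the essential image. This shows that $\Psi_{\CX}$ descends to a functor $\underline{\Psi}_{\CX}$ between the stable categories.

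Next I would verify that $\underline{\Psi}_{\CX}$ is an equivalence. Density and fullness follow directly from the corresponding properties of $\Psi_{\CX}$ established in Theorem \ref{Theoerm 5.1}. The essential step is faithfulness. Suppose $\phi:A\rt B$ is a morphism in $\CS^{\rm{scw}}_{\CX}(\La)$ whose image $\Psi_{\CX}(\phi)$ factors in $\mmod \underline{\CX}$ as $\beta\circ\alpha$ through some projective, which (up to summands) may be taken of the form $(-,\underline{Y})=\Psi_{\CX}(\Omega_{\La}(Y)\hookrightarrow P_Y)$. By fullness of $\Psi_{\CX}$, lift $\alpha$ and $\beta$ to morphisms $\alpha',\beta'$ in $\CS^{\rm{scw}}_{\CX}(\La)$ passing through $(\Omega_{\La}(Y)\hookrightarrow P_Y)$. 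The difference $\phi-\beta'\circ\alpha'$ lies in the kernel of $\Psi_{\CX}$, so by objectivity it factors through an object which is a direct summand of objects of the forms $X\st{1}\rt X$ and $0\rt X$; these are again projective in $\CS^{\rm{scw}}_{\CX}(\La)$. Collecting the two factorizations into one through a direct sum of projectives shows $\phi$ factors through a projective, hence vanishes in $\underline{\CS^{\rm{scw}}_{\CX}(\La)}$. This yields the first equivalence of the statement.

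Finally, to deduce the displayed equivalence $\CS_{\CX}(\La)/\CV\simeq\underline{\mmod \underline{\CX}}$, I would use the explicit description of $\underline{\CS^{\rm{scw}}_{\CX}(\La)}$: by definition it is the quotient of $\CS_{\CX}(\La)$ by the ideal $\CP_{\CE^{\rm{scw}}}$ of morphisms factoring through objects of $\CP(\CS^{\rm{scw}}_{\CX}(\La))$. Since by Proposition \ref{Proposition 3.4} the indecomposable projectives in this exact structure are precisely the objects $X\st{1}\rt X$, $0\rt X$ and $\Omega_{\La}(X)\hookrightarrow P_X$, the ideal $\CP_{\CE^{\rm{scw}}}$ coincides with $\CV$. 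I expect the main obstacle to be the faithfulness step: one has to correctly combine the lifting provided by fullness with the factorization provided by objectivity into a single morphism through a projective, which requires taking the direct sum of projective objects of the different types appearing above — a point where one must be careful that the exact category $\CS^{\rm{scw}}_{\CX}(\La)$ has enough of all the relevant projectives, an ingredient supplied by Proposition \ref{Proposition 3.4}.
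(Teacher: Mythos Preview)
Your proposal is correct and follows essentially the same strategy as the paper: reduce to showing the induced functor on stable categories is faithful, lift a factorization of $\Psi_{\CX}(\phi)$ through a projective $(-,\underline{Y})$ back to $\CS_{\CX}(\La)$, and then argue that the residual $\phi-\beta'\alpha'$ factors through kernel objects of $\Psi_{\CX}$.

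The only noteworthy difference is in how the lifting is carried out. You invoke fullness of $\Psi_{\CX}$ abstractly to obtain $\alpha',\beta'$ and then apply objectivity to the difference. The paper instead works concretely in $\mmod\CX$: it lifts the maps $\eta,\delta$ to chain maps between the length-three projective resolutions of $F,(-,\underline{X}),G$, reads off the lifts $(d_1,d_2),(t_1,t_2)$ via Yoneda, and uses that the two chain maps inducing $\phi$ are chain-homotopic to produce the factorization of $(\phi_1-t_1d_1,\phi_2-t_2d_2)$ through $(C\st{1}\rt C)\oplus(0\rt D)$. Your route is a bit cleaner, since the chain-homotopy argument in the paper is really just unpacking what objectivity says; conversely, the paper's approach makes the shape of the projective one factors through completely explicit. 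Either way the final identification $\underline{\CS^{\rm{scw}}_{\CX}(\La)}=\CS_{\CX}(\La)/\CV$ via Proposition~\ref{Proposition 3.4} is the same.
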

 	\begin{proof}
 		Since the exact functor $\Psi_{\CX}:\CS^{\rm{scw}}_{\CX}(\La)\rt \mmod \underline{\CX}$, by definition, sends the projective objects in the exact categories $\CS^{\rm{scw}}_{\CX}(\La)$ to the ones in $\mmod \underline{\CX}.$ Hence it induces the functor $\underline{\Psi_{\CX}}:\underline{\CS^{\rm{scw}}_{\CX}(\La)} \rt \underline{\mmod \underline{\CX}}$, which is full and dense as it is induced by a full and dense functor $\Psi_{\CX}.$ So for completing the proof, it suffices to prove that the $\underline{\Psi_{\CX}}$ is faithful. Assume that $(\phi_1, \phi_2): (A \st{f}\rt B)\rt (C\st{g}\rt D)$ is a morphism in $\CS_{\CX}(\La)$  that mapped into zero by the functor $\underline{\Psi_{\CX}}$. Set $\Psi_{\CX}(A\st{f}\rt B):= F, \ \Psi_{\CX}(C \st{g}\rt D):= G$ and $\Psi_{\CX}(\phi_1, \phi_2):=\phi.$ Our assumption implies that there is a module $X$ in $\CX$ such that the following factorization in $\mmod \underline{\CX}$, or in $\mmod {\CX}$ when we consider $\mmod \underline{\CX}$ as a full subcategory of $\mmod \CX$, exists
 		$$\hskip .5cm \xymatrix@-4mm{
 			&(-, \underline{X})\ar @{<-}[dl]_{\eta}\ar[dr]^{\delta}\\
 			F\ar [rr]^{\phi} && G}\hskip .5cm$$
 		Let $0 \rt (-, \Omega_{\La}(X))\rt (-, P)\rt (-, X)\rt (-, \underline{X})\rt 0$ be a projective resolution of $(-, \underline{X})$ in $\mmod \CX.$ It is a standard fact that the morphisms $\eta$ and $\delta$ can be lifted to the corresponding projective resolutions. These facts are recorded in the following commutative diagram

 			$$\xymatrix{    & 0 \ar[r] & (-, A)   \ar@/^1.25pc/@{.>}[dd]^<<<<<<<<{(-, \phi_1)} \ar[d]^{\eta_1}  \ar[r]^{(-, f)}  & (-, B)  \ar@/^1pc/@{.>}[dd]^<<<<<<<<<<<{(-, \phi_2)} \ar[d]^{\eta_2}  \ar[r]  &  (-, \rm{Cok} \ f)  \ar@/^1.2pc/@{.>}[dd]^<<<<<<<{\gamma} \ar[d]^{\alpha}  \ar[r]  & F \ar@/^1.2pc/@{.>}[dd]^{\phi} \ar[d]^{\eta} &   \\
 			&0 \ar[r] & (-, \Omega_{\La}(X)) \ar[r] \ar[d]^{\delta_1} & (-, P) \ar[r] \ar[d]^{\delta_2}  &   (-, X )  \ar[d]^{\beta}  \ar[r]  & (-, \underline{X}) \ar[d]^{\delta}  \\
 			& 0 \ar[r] & (-, C)  \ar[r]^{(-, g)}  & (-, D)  \ar[r]  &(- \rm{Cok} \ g) \ar[r] & G }$$
 		Since $\phi=\delta \circ \eta$ then chain maps $(\delta_1\circ\eta_1, \  \delta_2\circ \eta_2,\  \beta\circ\alpha)$ and $(\widehat{\phi_1}, \ \widehat{\phi_2}, \ \gamma)$ are chain-homotopic. Set $\widehat{\phi_i}:=(-, \phi_i)$, $i=1, 2$. Therefore, the chain map $(\phi_1-\delta_1\circ\eta_1, \ \phi_2-\delta_2\circ \eta_2, \ \gamma-\beta\circ\alpha)$ factors through a projective complex $P$ over $\mmod \CX$. The complex $P$ can be taken a projective precover of the deleted projective resolution of $G$ in the category of complexes over $\mmod \underline{\CX}$. By the Yoneda lemma,  we can write $\eta_i=(-, d_i)$ and $\delta_i=(-, t_i)$, for $i=1, 2$. By returning to $\CS_{\CX}(\La)$ via the Yoneda lemma and using the above observation we can deduce the morphism $(\phi_1-t_1\circ d_1, \ \phi_2-t_2\circ d_2)$ factors through the projective object $(C \st{1}\rt C)\oplus (0 \rt D)$ in the exact category $\CS^{\rm{scw}}_{\CX}(\La)$. Consequently, the morphism $(\phi_1, \phi_2)$ factors through the projective object $(C \st{1}\rt C)\oplus (0 \rt D) \oplus (\Omega_{\La}(X)\rt P)$ of $\CS^{\rm{scw}}_{\CX}(\La)$, so the desired result. The proof is now complete.
 	\end{proof}
 
 \begin{remark}\label{remark 4.3}
 	In a dual manner, whenever the exact category $\CX$ has enough injectives,  one can see that  the functor $\Psi_{\CX}$ also induces an equivalence between the costable categories of exact categories $\overline{\CS^{\rm{scw}}_{\CX}(\La)} \simeq \overline{\mmod \underline{\CX}}$. Let us for making better a reference later denote the induced functors by  $\underline{\Psi_{\CX}}$ and $\overline{\Psi_{\CX}}$ for the stable and costable cases, respectively. 
 \end{remark}

 We recall that  an additive category $\CT$ is called a triangulated  category if there exists an auto equivalence $\Sigma:\CT \rt \CT$, called shift functor, and a class $\bigtriangleup$ of diagrams of the form $A \rt B \rt C \rt \Sigma A$ in $\CT$ satisfying a certain set of axioms. A triangulated functor $F:\CT\rt \CT'$ between two triangulated categories $\CT$ and $\CT'$ is an additive functor that commutes with shift functors and preserves the distinguished triangles. An equivalence of triangulated categories is a  triangulated functor and an equivalence of categories. Diagrams in $\bigtriangleup$ are called distinguished triangles. The basic references for  the triangulated categories are \cite{N} and \cite{V}.\\
Let $(\mathcal{C}, \CE)$ be a Frobenius category. It is known that the stable category $\underline{\mathcal{C}}$ gets a triangulated structure, see \cite[Chapter I, Section 2]{Ha}. The shift functor $\Sigma_{\mathcal{C}}: \underline{\mathcal{C}}\rt \underline{\mathcal{C}}$ is defined such that for $C \in \mathcal{C},$
$$0 \rt C \rt I(C)\rt \Sigma_{\mathcal{C}} C \rt 0,$$
is a short exact sequence in $\CE$ with $I(C)$ an injective object. Distinguished triangles in $\underline{\mathcal{C}}$ are just given by the short exact sequences in $\CE$.\\

 In Corollary \ref{Corollay 3.6Frobenius},  we saw when the exact category $\CX$ is Frobenius then the exact category $\CS^{\rm{scw}}_{\CX}(\La)$ becomes a Frobenius exact category. Assume the exact category $\CX$ is Frobenius, in view of \cite[Chapter I, Section 2]{Ha}, the stable category $\underline{\CX}$ and $\underline{\CS^{\rm{scw}}_{\CX}(\La)}$ get naturally a triangulated structure. On the other hand, since $\underline{\CX}$ is a triangulated category then by
 a classical result from \cite{F}, $\mmod \underline{\CX}$ is a Frobenius (abelian) category.  Consequently, the stable category  $\underline{\mmod \underline{\CX}}$ similarly gets a triangulated structure in a natural way. Hence in this case the functor $\underline{\Psi_{\CX}}$ turns to be a functor between triangulated categories. The natural question may arise here whether $\underline{\Psi_{\CX}}$ is a triangulated functor. In the sequel,  we will show that the answer to this question is in the affirmative.
 \begin{theorem}\label{Theorem 5.3}
 	Keep all the notations as above. Assume the exact category $\CX$ is Frobenius. The functor $\Psi_{\CX}$ induces a triangle  equivalence between  the triangulated categories $\underline{\CS^{\rm{scw}}_{\CX}(\La)} \simeq \underline{\mmod \underline{\CX}}$.	
 \end{theorem}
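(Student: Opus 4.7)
The plan is to promote the equivalence of stable categories $\underline{\Psi_{\CX}}: \underline{\CS^{\rm{scw}}_{\CX}(\La)} \simeq \underline{\mmod \underline{\CX}}$ from Theorem \ref{Stableequivalence1} to a triangle equivalence. Since both sides are stable categories of Frobenius exact categories (the left by Corollary \ref{Corollay 3.6Frobenius}; the right because $\underline{\CX}$ is triangulated and Freyd's classical result says $\mmod \underline{\CX}$ is then Frobenius), they carry canonical triangulated structures in which distinguished triangles come from conflations and the shift is computed by embedding into an injective. It thus suffices to verify that the exact functor $\Psi_{\CX}$ between Frobenius categories sends injective objects to injective objects; everything else is then formal.

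The key step is to check injective-preservation. I would argue case by case using the classification from Proposition \ref{Proposition 3.15}. The indecomposable injectives of the forms $(X \xrightarrow{1} X)$ and $(0 \to X)$ map to the zero functor by a direct computation of the defining four-term sequence. For the remaining type $(X \xrightarrow{l} I)$, with $l$ left minimal and $I$ injective (hence also projective, by the Frobenius assumption on $\CX$), applying $\Psi_{\CX}$ to the associated conflation $0 \to X \to I \to \Coker l \to 0$ yields the four-term sequence $0 \to (-, X) \to (-, I) \to (-, \Coker l) \to F \to 0$, and the long exact sequence of $\Ext$ together with $\Ext^1_{\La}(-, I)|_{\CX} = 0$ (since $I$ is injective in $\CX$ and $\CX$ is extension-closed) identifies $F$ with $\Ext^1_{\La}(-, X)|_{\CX}$. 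By Lemma \ref{Lemma 3.19} this functor is injective in $\mmod \underline{\CX}$.

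Once injective-preservation is in hand, I would construct the natural isomorphism $\underline{\Psi_{\CX}} \circ \Sigma \cong \Sigma \circ \underline{\Psi_{\CX}}$ as follows. For any $A \in \CS^{\rm{scw}}_{\CX}(\La)$, choose a conflation $0 \to A \to I(A) \to \Sigma A \to 0$ in $\CE^{\rm{scw}}_{\CX}$ with $I(A)$ injective. Applying the exact functor $\Psi_{\CX}$ (Theorem \ref{Theoerm 5.1}) yields a conflation $0 \to \Psi_{\CX}(A) \to \Psi_{\CX}(I(A)) \to \Psi_{\CX}(\Sigma A) \to 0$ in $\mmod \underline{\CX}$ with injective middle term, which is therefore a defining sequence for $\Sigma \Psi_{\CX}(A)$ in $\underline{\mmod \underline{\CX}}$. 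This yields $\Psi_{\CX}(\Sigma A) \cong \Sigma \Psi_{\CX}(A)$, and the standard homotopy-lifting arguments in the stable category of a Frobenius category give naturality in $A$. Preservation of distinguished triangles is then immediate: a triangle in $\underline{\CS^{\rm{scw}}_{\CX}(\La)}$ is represented by a conflation in $\CE^{\rm{scw}}_{\CX}$, which $\Psi_{\CX}$ sends to a conflation in $\mmod \underline{\CX}$, which represents a distinguished triangle in $\underline{\mmod \underline{\CX}}$ matched with the shift via the isomorphism above.

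The main obstacle I expect is the injective-preservation step, and within it specifically the case $(X \xrightarrow{l} I)$: one must carefully combine the Frobenius property of $\CX$ (which forces $\Ext^1_{\La}(-, I)|_{\CX} = 0$) with the identification from Lemma \ref{Lemma 3.19} to recognize the resulting functor as an injective object in $\mmod \underline{\CX}$. Once that is cleared, the compatibility with the shift and the rotation axiom for triangles are routine consequences of the exactness of $\Psi_{\CX}$ and of the standard formalism of stable categories of Frobenius exact categories.
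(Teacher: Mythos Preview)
Your proposal is correct and takes a different, arguably cleaner, route than the paper. The paper works in the reverse direction: for each $F\in\mmod\underline{\CX}$ it fixes an injective embedding $0\to F\to(-,\underline{X})\to\Sigma_{\underline{\CX}}F\to 0$, takes minimal projective resolutions in $\mmod\CX$ of the outer terms, uses the Horseshoe lemma to build a resolution of the middle, compares it with the known minimal resolution of $(-,\underline{X})$, and then translates back via Yoneda to obtain a conflation in $\CE^{\rm{scw}}_{\CX}$ with injective middle term; this exhibits $\underline{\Psi_{\CX}}\Sigma^{\rm{scw}}\cong\Sigma_{\underline{\CX}}\underline{\Psi_{\CX}}$ on specific representatives $(A_F\to B_F)$. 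Your approach instead pushes forward: using the classification of injectives in $\CS^{\rm{scw}}_{\CX}(\La)$ (Proposition~\ref{Proposition 3.15}) together with Lemma~\ref{Lemma 3.19} to verify injective-preservation, and then invoking the standard fact that an exact functor between Frobenius categories preserving injectives induces a triangle functor on stable categories. Your argument is more modular and avoids the Horseshoe-and-minimal-resolution bookkeeping; the paper's argument, by contrast, makes the natural isomorphism completely explicit (it is the identity on the chosen representatives), which can be useful when one needs concrete control over the shift.
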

\begin{proof}
	Set $\Sigma_{\underline{\CX}}$ and $\Sigma^{\rm{scw}}$ for the shift functors in the triangulated categories $\underline{\mmod \underline{\CX}}$ and $\underline{\CS^{\rm{scw}}_{\CX}(\La)}$, respectively. According to \cite[Lemma in page 23]{Ha}, we need to define an invertible natural transformation $\alpha: \underline{\Psi_{\CX}}\Sigma^{\rm{scw}} \rt \Sigma_{\underline{\CX}}\ \underline{\Psi_{\CX}}$. To this end, by definition, we can  fix for each $F \in \mmod \underline{\CX}$ a short exact sequence
	$$0 \rt F \rt (-, \underline{X})\rt \Sigma_{\underline{\CX}} F \rt 0 \ \ \ \ \ (*)$$
	for some $X$ in $\CX.$ Clearly $X$ depends on the  given functor $F$. Note  since $\mmod \underline{\CX}$ is Frobenius, then  $(-, \underline{X})$ is an injective functor in $\mmod \underline{\CX}$.  Also, fix a minimal  projective resolution in $\mmod \CX$ as the following for  $F$	
	$$0 \rt(-, A_F)\rt (-, B_F)\rt (-, C_F)\rt F\rt 0.$$	
	Using the Horseshoe lemma implies the following commutative diagram in $\mmod \CX$ with splitting  columns except the rightmost one
		$$\xymatrix{& 0 \ar[d] & 0 \ar[d] & 0 \ar[d]& 0\ar[d]& &\\
		0 \ar[r] & (-,A_F) \ar[d] \ar[r] & (-, B_F) \ar[d]
		\ar[r] &(-, C_F) \ar[d] \ar[r] & F\ar[r]\ar[d]^f&0\\
		0 \ar[r] & (-,A_F\oplus A_{\Sigma_{\underline{\CX}}F})  \ar[r]\ar[d] & (-, B_F\oplus B_{\Sigma_{\underline{\CX}}F})\ar[d]
		\ar[r] & (-,  C_F\oplus C_{\Sigma_{\underline{\CX}}F}) \ar[r]\ar[d] & (-, \underline{X})\ar[r]\ar[d]^g& 0\\0\ar[r] &(-, A_{\Sigma_{\underline{\CX}}F})\ar[r]\ar[d]&(-,  B_{\Sigma_{\underline{\CX}}F})\ar[r]\ar[d]&(-, C_{\Sigma_{\underline{\CX}}F})\ar[d] \ar[r]&\Sigma_{\underline{\CX}}F \ar[r]\ar[d]&0 &\\ &0&0&0&0&&
	} 	 $$
	The exact sequence in the  middle row  of diagram above is indeed a projective resolution of $(-, \underline{X})$ in $\mmod \CX$. We know the exact sequence $0 \rt (-, \Omega_{\La}(X))\rt (-, P)\rt (-, X)\rt (-, \underline{X})\rt 0$ acts as a minimal projective resolution of $(-, \underline{X})$ in $\mmod \underline{\CX}$. Hence the deleted projective resolution of $(-, \underline{X})$ appeared in the second row of diagram above is isomorphic to  the direct sums of the following complexes
	$$0 \rt (-, \Omega_{\La}(X)) \rt (-, P)\rt (-, X)\rt 0$$
	$$0 \rt (-, D) \st{(-, 1)}\rt (-, D)\rt 0 \rt 0, \ \ \  \text{for some} \ \ \ D  \ \text{in} \ \ \CX,$$
	$$0 \rt 0 \rt (-, L) \st{(-, 1)}\rt (-, L) \rt 0, \ \ \  \text{for some} \ \ \ L  \ \text{in} \ \ \CX.$$
	Here the first $(-, D)$ and $(-, L)$ from the left side are settled 	at degrees $-2$ and $-1$, respectively. Returning to $\CS_{\CX}(\La)$ via the Yoneda Lemma and in view of the above facts, we obtain the following short exact sequence
{\footnotesize  \[ \xymatrix@R-2pc {  &  ~ A_F\ar[dd]~   & G\ar[dd]~  & A_{\Sigma_{\underline{\CX}}F}\ar[dd]\\ \epsilon: \  \ \  \  0 \ar[r] &  _{ \ \ \ \ } \ar[r] &_{\ \ \ \ \ } \ar[r] _{\ \ \ \ \ }&  _{\ \ \ \ \ }\ar[r] & 0 \\ & B_F & H& B_{\Sigma_{\underline{\CX}}F} }\]}
in $\CE^{\rm{scw}}_{\CX}$, where $(G \rt H)\simeq (\Omega_{\La}(X)\rt P)\oplus(D\st{1}\rt D)\oplus (0 \rt L)$. Hence the middle term in  $(\epsilon)$	 by the characterization given in Proposition \ref{Proposition 3.4} is an injective object in the exact category $\CS^{\rm{scw}}_{\CX}(\La)$. Hence we may define the shift functor $\Sigma^{\rm{scw}}$ by the  end term of the above short exact sequence. We can observe that $ \underline{\Psi_{\CX}}\Sigma^{\rm{scw}}(A_F \rt B_F) = \Sigma_{\underline{\CX}}\ \underline{\Psi_{\CX}}(A_F\rt B_F)$.  Now we define  $\alpha_{(A_F\rt B_F)}=\rm{Id}_{(A_F\rt B_F)}$. On the other hand, since any object $(X \st{g}\rt Y)$ in $\CS_{\CX}(\La)$ is isomorphic to $(A_F\rt B_F)$ for some $F$ in $\underline{\CS^{\rm{scw}}_{\CX}(\La)}$, then one can in an obvious way extend the $\alpha$ to all  objects. In fact, applying the Yoneda functor to the short exact sequence
$$0 \rt X \st{g}\rt Y \rt \rm{Cok} \ g \rt 0$$ induced by the object $(X\st{g}\rt Y)$, we get the following sequence in $\mmod \CX$
$$0 \rt (-, X)\rt (-, Y)\rt (-, \rm{Cok}\ g) \rt F \rt 0.$$
Similar to the argument above by comparing the above projective resolution of $F$ with the minimal one,  we can see the isomorphism $(X \st{g}\rt Y)\simeq (A_F \rt B_F)$ in $\underline{\CS^{\rm{scw}}_{\CX}(\La)}$.	
\end{proof}

\begin{example}\label{Example 4.5}
Let $n>0$ and $k$ be a field. Let $k[x]$ be the polynomial ring in one variable $x$ with coefficients in $k$ and $\La_n=k[x]/(x^n)$. Let $\Pi_n$ be the preprojective algebra of type $\mathbb{A}_n$. The preprojective algebras were introduced by Gelfand and Ponomare \cite{GP}.  In view of  \cite{RZ},  َ$\mmod \underline{\rm{mod}}\mbox{-}\La_n\simeq \mmod \Pi_{n-1}$. Hence by Theorem \ref{Theorem 5.3}, we have triangle equivalence $\underline{\rm{mod}}\mbox{-}\Pi_{n-1}\simeq \underline{\CS^{\rm{scw}}(\La_n)}$. Therefore, this triangle equivalence might be helpful to transfer results for submodule categories to preprojective algebras. For instance,  by \cite[Corollary 6.5]{RS2} and our result,  the Auslander-Reiten translation, as defined in \cite{Ha}, in the triangulated category $\underline{\rm{mod}}\mbox{-}\Pi_{n}$ is invariant under the sixth power.  
\end{example}

\section{Auslander-Reiten-Serre duality}
In this section we will show that three kinds of exact structures on $\CS_{\CX}(\La)$ have almost split sequences, and equivalently Auslander-Reiten-Serre duality, if we assume further $\CX$ is functorily finite.\\
Let us begin  with some backgrounds about almost split sequences in an exact categories. Let $(\mathcal{C}, \CE)$ be an exact category. Recall that a morphism $v\colon E\to Y$ in $\mathcal{C}$ is \emph{right almost split} if it is not a retraction and each $f\colon Z\to Y$ which is not a retraction factors through $v$. Dually, a morphism $u\colon X\to E$ in $\mathcal{C}$
is \emph{left almost split} if it is not a section and each $f\colon X\to Z$ which is not a section factors through $u$. A  short exact sequence  $\delta\colon 0 \rt X \xrightarrow{u} E \xrightarrow{v} Y \rt 0$ in $\CE$ is an \emph{almost split sequence} if $u$ is left almost split and $v$ is right almost split.  Since $\delta $ is unique up to isomorphism for $X$ and $Z$, we may write $X=\tau_{\mathcal{C}}Y$ and $Y=\tau^{-1}_{\mathcal{C}}X$. We call $X$ the Auslander-reiten translation of $Y$ in $\mathcal{C}$.  A non-zero object $X \in \mathcal{C}$ is said to be {\it endo-local} if $\rm{End}_{\mathcal{C}}(X)$ is local. We say that $\mathcal{C}$ has right almost split sequences if for any endo-local non projective object $Y$, there exits an almost split short exact sequence $\delta\colon 0 \rt X \xrightarrow{u} E \xrightarrow{v} Y \rt 0$ in $\CE$. Dually, we can define the notion of having left almost sequences for the exact category $\mathcal{C}$.   We say that $\mathcal{C}$ has {\it almost split sequences} if it has right and left almost split sequences. Assume further  that $\mathcal{C}$ is a {\it $k$-linear category}, that is, for each $X, Y \in \mathcal{C}, $ $\Hom_{\mathcal{C}}(X, Y)$ is equipped with a structure of $k$-modules and the composition of morphisms of $\mathcal{C}$ is $k$-linear. For example, taking $\CX$ a subcategory of $\mmod \La$, as assumed $\La$ is a $k$-algebras, then $\CX$ gets canonically  a $k$-linear category. Whenever $\mathcal{C}$ is a $k$-linear category in a natural way $\Ext^1_{\mathcal{C}}(X, Y)$ gets a structure of $k$-modules. In this case we call that $\mathcal{C}$ is {\it Ext-finite} if for any $X, Y \in \mathcal{C}$, $\Ext^1_{\mathcal{C}}(X, Y)$ is a finitely generated $k$-module. An Auslander-Reiten-Serre (ARS-) duality is a pair $(\tau, \sigma)$ of an equivalence functor $\sigma_{\mathcal{C}}: \underline{\mathcal{C}}\rt \overline{\mathcal{C}}$ and a binatural isomorphism
$$\eta_{X, Y}:\Hom_{\underline{\mathcal{C}}}(X, Y)\simeq D\Ext^1_{\mathcal{C}}(Y, \sigma_{\mathcal{C}} X) \ \ \ \text{for any}  \ \ X, Y \in \mathcal{C}.$$
If $\mathcal{C}$ has an ARS-duality $\sigma_{\mathcal{C}}$, then it is unique up to isomorphism,  so using the notation  $\sigma_{\mathcal{C}}$ for the ARS dualities is well-defined up to isomorphism. This notation is helpful to distinguish  ARS-dualities  whenever we are dealing with different exact categories simultaneously. Due to \cite[Theorem 3.6]{INY}  or \cite{J},  there is a close connection between the existence of almost split sequences in $\mathcal{C}$ and ARS-duality.

\begin{theorem}\label{ARS duality}
	Let $\mathcal{C}$ be a $k$-linear Ext-finite Krull-Schmidt exact category with  enough projectives and injectives.. Then the following conditions are equivalent.
	\begin{itemize}
		\item [$(1)$] $\mathcal{C}$ has almost split sequences;
		\item [$(2)$] $\mathcal{C}$ has an Auslander-Reiten-Serre duality;
		\item [$(3)$] The stable category $\underline{\mathcal{C}}$ is a dualizing $k$-variety;
		\item [$(4)$] The costable category $\overline{\mathcal{C}}$ is a dualizing $k$-variety.
	\end{itemize}	
\end{theorem}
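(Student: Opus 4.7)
The plan is to prove the cycle $(1)\Rightarrow(2)\Rightarrow(3)\Rightarrow(1)$ together with the symmetric route $(2)\Leftrightarrow(4)$, so that all four conditions are equivalent. Throughout, I would use the hypotheses that $\mathcal{C}$ is Krull-Schmidt, Ext-finite, and has enough projectives and injectives in order to guarantee that the stable category $\underline{\mathcal{C}}$ and costable category $\overline{\mathcal{C}}$ are themselves $k$-linear, Krull--Schmidt and $\Hom$-finite, and that $\mmod \underline{\mathcal{C}}$, $\mmod \overline{\mathcal{C}}$ are well-behaved abelian categories with projective covers.

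For $(1)\Rightarrow(2)$: assuming $\mathcal{C}$ has almost split sequences, for each endo-local non-projective object $Y$ I fix an almost split sequence $0\to X\to E\to Y\to 0$ and declare $\sigma_{\mathcal{C}}(Y)=X$. Using that any non-retraction $Z\to Y$ factors through $E\to Y$, together with the uniqueness of almost split sequences, the assignment sending $f\colon Z\to Y$ to the pullback extension defines a natural map $\Hom_{\mathcal{C}}(Z,Y)\to D\,\Ext^1_{\mathcal{C}}(Y,\sigma_{\mathcal{C}}Y)$ whose kernel is exactly $\CP_{\CE}(Z,Y)$; bilinearity and naturality in $Y$ follow from the functorial nature of pullbacks. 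Extending additively to decomposable $Y$ yields the binatural isomorphism $\eta$. Conversely for $(2)\Rightarrow(1)$, given $\eta$ I evaluate $\eta_{Y,Y}$ on $\id_Y\in\Hom_{\underline{\mathcal{C}}}(Y,Y)$; since $\End_{\underline{\mathcal{C}}}(Y)$ is local (because $Y$ is endo-local and non-projective), this produces a nonzero element of $D\Ext^1_{\mathcal{C}}(Y,\sigma_{\mathcal{C}}Y)$, and the corresponding extension is almost split by a standard verification using naturality.

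For the equivalence with the dualizing-variety conditions $(3)$ and $(4)$, I would invoke Auslander's characterization: a $k$-linear $\Hom$-finite Krull--Schmidt category $\mathcal{A}$ is a dualizing $k$-variety precisely when the Yoneda-$k$-dual of each representable $(-,A)$ lies in $\mmod \mathcal{A}^{\op}$, equivalently when the $k$-duality induces an exact equivalence $D\colon\mmod \mathcal{A}\simeq \mmod \mathcal{A}^{\op}$. Given an ARS-duality $\sigma_{\mathcal{C}}\colon\underline{\mathcal{C}}\simeq\overline{\mathcal{C}}$ together with $\eta$, I would show the composite $D\circ\Ext^1_{\mathcal{C}}(-,\sigma_{\mathcal{C}}(?))$ realizes the dualizing functor on $\underline{\mathcal{C}}$, and symmetrically on $\overline{\mathcal{C}}$ via $\sigma_{\mathcal{C}}^{-1}$, giving $(2)\Rightarrow(3)$ and $(2)\Rightarrow(4)$. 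For $(3)\Rightarrow(2)$, I would take the dualizing equivalence $\mmod\underline{\mathcal{C}}\simeq\mmod\underline{\mathcal{C}}^{\op}$, restrict to representables, and use that minimal projective presentations exist (from Krull--Schmidt) to recover a functor $\sigma_{\mathcal{C}}$ and the desired binatural pairing; here the enough-injectives hypothesis plays the role of guaranteeing that the reconstructed $\sigma_{\mathcal{C}}$ lands in the costable category.

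The main obstacle will be $(3)\Rightarrow(2)$: extracting a concrete ARS-translate $\sigma_{\mathcal{C}}$ out of the abstract duality on functor categories is the delicate technical step, because it requires tracking how the $k$-dual of a projective functor on $\underline{\mathcal{C}}$ acquires a projective presentation on $\overline{\mathcal{C}}^{\op}$, and then interpreting that presentation back in $\mathcal{C}$ via enough injectives to identify $\sigma_{\mathcal{C}}$ as an object rather than merely a functor. This is precisely the content of Auslander's transpose/Nakayama construction carried over from module categories to general exact categories, and is where the references \cite{INY} and \cite{J} do the real work. Once this bridge is in place, the remaining implications are essentially formal consequences of the functorial machinery already set up.
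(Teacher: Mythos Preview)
The paper does not supply its own proof of this theorem: it is quoted from the literature, with the sentence ``Due to \cite[Theorem 3.6]{INY} or \cite{J}'' immediately preceding the statement, and the remark afterwards that the result in \cite{INY} is even stated in the broader setting of extriangulated categories. So there is no ``paper's own proof'' to compare against; the authors treat this as a black box.

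Your outline is a reasonable sketch of the standard argument, and you yourself identify the crux correctly: the implication $(3)\Rightarrow(2)$ (or equivalently the construction of $\sigma_{\mathcal C}$ from the dualizing property) is where the genuine work lies, and that is exactly what \cite{INY} and \cite{J} carry out. One small correction: in your $(1)\Rightarrow(2)$ sketch, sending $f\colon Z\to Y$ to the pullback extension lands in $\Ext^1_{\mathcal C}(Z,\sigma_{\mathcal C}Y)$, not in $D\Ext^1_{\mathcal C}(Y,\sigma_{\mathcal C}Y)$; the duality isomorphism is built the other way around, by pairing an element of $\Ext^1_{\mathcal C}(Y,\sigma_{\mathcal C}X)$ with a morphism in $\underline{\Hom}(X,Y)$ via pushout/pullback and then evaluating against the socle of $\Ext^1$ picked out by the almost split sequence. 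Apart from that slip, the strategy you describe is the one actually used in the cited references.
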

Let us mention the above theorem in \cite[Theorem 3.6]{INY} is given for more general setting of extriangulated categories. Recently, the class of extriangulated categories was introduced in \cite{NP} as a simultaneous generalization of exact categories and triangulated categories.\\ 
Let us add this fact in the case  $\mathcal{C}$ of  theorem above, for any indecomposable non-projective $X$ in $\mathcal{C}$, $\tau_{\mathcal{C}} X\simeq \sigma_{\mathcal{C}}X$, if we remove all the injective summand appearing in a decomposition of $\sigma_{\mathcal{C}}X$ into  indecomposable objects.
\begin{lemma}\label{Lemma 4.2}
	Let $\CX$ be a resolving and functorialy finite subcategory of $\mmod \La.$ Assume the exact category $\CX$ has enough injectives. Then the following statments hold.
	\begin{itemize}
		\item[$(1)$]	
		for any injective object $F$ in $\mmod \underline{\CX}$, there is some $X \in \CX$ such that $F \simeq \Ext^1_{\La}(-, X)\mid_{\CX};$
		\item [$(2)$] If $\Ext^1_{\La}(-, X)\mid_{\CX} \simeq \Ext^1_{\La}(-, Y)\mid_{\CX}$, for any $X$ and $Y$ in $\CX$, then $X\simeq Y$ in $\overline{\CX}.$
	\end{itemize}
\end{lemma}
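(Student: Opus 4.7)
The plan is to introduce the functor
\[
\Phi: \overline{\CX} \rt \mmod \underline{\CX}, \qquad X \mapsto \Ext^1_{\La}(-, X)|_{\CX},
\]
prove that it is fully faithful, and deduce both assertions from this together with the Krull-Schmidt property of $\mmod \underline{\CX}$. The functor is well-defined on the costable category $\overline{\CX}$ because any morphism that factors through an injective $I$ of the exact category $\CX$ induces the zero transformation, as $\Ext^1_{\La}(-, I)|_{\CX} = 0$ (indeed, $\CX$ being extension-closed forces every short exact sequence in $\mmod \La$ with left term $I$ and right term in $\CX$ to sit in the exact category $\CX$ and hence to split).

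For fully faithfulness, fix for each $X \in \CX$ an injective copresentation $0 \rt X \rt I_X \rt L_X \rt 0$ in the exact category $\CX$; the Yoneda embedding turns it into a projective resolution
\[
0 \rt (-, X) \rt (-, I_X) \rt (-, L_X) \rt \Phi(X) \rt 0
\]
in $\mmod \CX$. Given $\alpha: \Phi(X) \rt \Phi(Y)$, the comparison theorem supplies a chain map between the two such resolutions, whose components are, by the Yoneda lemma, induced by genuine morphisms $X \rt Y$, $I_X \rt I_Y$, $L_X \rt L_Y$. Two choices of chain-level lift differ by a null-homotopy whose piece $(-, I_X) \rt (-, Y)$ comes from some morphism $I_X \rt Y$, so the resulting ambiguity on $X \rt Y$ factors through the injective $I_X$ and vanishes in $\overline{\CX}$. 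Thus one obtains a well-defined $\alpha^{\#} \in \Hom_{\overline{\CX}}(X, Y)$ with $\Phi(\alpha^{\#}) = \alpha$, proving fullness. Conversely, if $\Phi(f) = 0$ then the chain map induced by $f$ is null-homotopic, and reading the homotopy back through Yoneda shows $f$ factors through $I_X$, so $f = 0$ in $\overline{\CX}$.

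Part (2) is then immediate: an isomorphism $\Phi(X) \simeq \Phi(Y)$ lifts to mutually inverse morphisms in $\overline{\CX}$. For (1), Lemma \ref{Lemma 3.19} provides a monomorphism $F \hookrightarrow \Phi(X)$ for some $X \in \CX$, and injectivity of $F$ splits it to yield $\Phi(X) \simeq F \oplus F'$. Decomposing $X = \bigoplus X_i$ into indecomposables of the Krull-Schmidt category $\CX$, each $\End_{\overline{\CX}}(X_i)$ is either zero (precisely when $X_i$ is injective in $\CX$, in which case $\Phi(X_i) = 0$) or a local ring, being a proper quotient of the local ring $\End_{\CX}(X_i)$. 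Fully faithfulness of $\Phi$ transports this to $\End_{\mmod \underline{\CX}}(\Phi(X_i))$, so each nonzero $\Phi(X_i)$ is indecomposable. Since $\mmod \underline{\CX}$ is Krull-Schmidt, the uniqueness of indecomposable decompositions forces $F$ to be isomorphic to $\bigoplus_{i \in J} \Phi(X_i) = \Phi(\bigoplus_{i \in J} X_i)$ for some index set $J$, finishing (1).

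The main obstacle is the fullness part of step two: one must verify that the chain-level lift of $\alpha$ descends to a \emph{well-defined} morphism in $\overline{\CX}$, independently of the choice of injective copresentation and of the chain homotopy. This is a standard but delicate bookkeeping through the Yoneda lemma, and it crucially uses the hypothesis that the exact category $\CX$ has enough injectives so that the ambient projective resolution of $\Phi(X)$ in $\mmod \CX$ is available.
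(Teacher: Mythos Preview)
Your proof is correct and takes a genuinely different route from the paper's. The paper invokes Auslander--Reiten--Serre duality: since $\CX$ is functorially finite and resolving, \cite[Theorem~2.4]{AS} gives almost split sequences in $\CX$, and then Theorem~\ref{ARS duality} supplies an equivalence $\sigma_{\CX}\colon \underline{\CX}\to\overline{\CX}$ together with a natural isomorphism $\underline{\Hom}_{\La}(\sigma_{\CX}^{-1}X,-)\simeq D\Ext^1_{\La}(-,X)|_{\CX}$. Dualizing, a direct summand of $\Ext^1_{\La}(-,A)|_{\CX}$ corresponds to a direct summand of the representable functor $\underline{\Hom}_{\La}(\sigma_{\CX}^{-1}A,-)$ in $\mmod(\underline{\CX})^{\op}$, which is again representable because $\underline{\CX}$ has split idempotents; this yields~(1), while~(2) is read off from the same duality plus Yoneda.

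Your argument bypasses ARS duality entirely: you show directly that $\Phi\colon\overline{\CX}\to\mmod\underline{\CX}$, $X\mapsto\Ext^1_{\La}(-,X)|_{\CX}$, is fully faithful by lifting through the length-two projective resolutions supplied by Lemma~\ref{Lemma 3.19}, and then use Krull--Schmidt in $\mmod\underline{\CX}$. This is more elementary and in fact slightly more general, since it uses only contravariant finiteness of $\CX$ (to make $\mmod\CX$ abelian so that the comparison theorem applies) rather than full functorial finiteness. The paper's approach, on the other hand, is quicker once the ARS machinery is available and makes the link to the Auslander--Reiten translation transparent, which is the theme of the surrounding section.
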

\begin{proof}
	Thanks to \cite[Theorem 2.4]{AS}, since by our assumption $\CX$ is functorily finite,  $\CX$ has almost split sequences. Hence there exists by Theorem \ref{ARS duality} the equivalence $\sigma_{\CX}:\underline{\CX}\rt \overline{\CX}$ such that for any module $X \in \CX$, there is an isomorphism of the following functors
	$$\underline{\rm{Hom}}_{\La}(\sigma^{-1}_{\CX}X, -)\simeq D\Ext^1_{\La}(-, X)|_{\CX} \ \ \ \ \ \ \ \ \  (*)$$
	Note that here $\overline{\CX}$ is the costable category of the exact category $\CX$,  not to be necessarily the full subcategory of injectively stable category $\overline{\rm{mod}}\mbox{-}\La$ consisting of  modules in $\CX$. Since $\CX$ is closed under direct summands then $\underline{\CX}$ has split idempotents. This follows that any projective functor in $\mmod (\underline{\CX})^{\op}$ is in the form $\underline{\rm{Hom}_{\La}}(Y, -)$ for some $Y$ in $\CX.$ We will use this fact to prove our claim. Assume $F$ is an injective object in $\mmod \underline{\CX}$. Lemme \ref{Lemma 3.19} implies the existence of a monomorphim $0 \rt F \rt \Ext^1(-, A)|_{\CX}$ for some $A \in \CX$. Since $F$ is injective then it is a direct summand of $\Ext^1(-, A)|_{\CX}$, so we can write $\Ext^1(-, A)|_{\CX}= F\oplus G.$ Using $(*)$ for the object $A$ yields $DF$  is a direct summand of $\underline{\rm{Hom}}_{\La}(A, -)$. Now the fact follows that $DF\simeq \underline{\rm{Hom}}_{\La}(B, -)$ for some $B.$ Again using $(*)$ for $B$, we have $\underline{\rm{Hom}}_{\La}(B, -)\simeq D\Ext^1_{\La}(-, \sigma_{\CX}B)$. This gives $F\simeq \Ext^1_{\La}(-, \sigma_{\CX}B)$, as desired. The second assertion follows from the ARS-duality in conjunction with the Yoneda Lemma.
\end{proof}
We recall from \cite[Section 2]{AR1}, a Krull-Schmidt Hom-finite $k$-linear category $\mathcal{C}$ is a {\it dualizing $k$-variety} if the standard $k$-duality $D:\Mod \mathcal{C} \rt \Mod(\mathcal{C}^{\rm{op}}), F \mapsto D\circ F$ induces a duality $D:\mmod \mathcal{C}\rt \mmod \mathcal{C^{\rm{op}}}$.
\begin{proposition}\label{Almost-Split-Secondtype}
	Let $\CX$ be a functorialy finite resolving subcategory of $\mmod \La$. Assume the exact category $\CX$ has enough injectives.
	Then the exact category $\CS^{\rm{cw}}_{\CX}(\La)$ has almost split sequences.
\end{proposition}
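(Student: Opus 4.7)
The plan is to reduce the existence of almost split sequences in $\CS^{\rm{cw}}_{\CX}(\La)$ to a dualizing-variety statement via Theorem~\ref{ARS duality}. First I would verify its standing hypotheses on $\CS^{\rm{cw}}_{\CX}(\La)$: it is $k$-linear and Krull--Schmidt as a summand-closed subcategory of $\mmod T_2(\La)$ (as noted at the start of Section~\ref{Section3}), the $\Ext^{1}$-groups with respect to $\CE^{\rm{cw}}_{\CX}$ are finite over $k$ because every conflation is in particular a short exact sequence in the Artin $k$-algebra module category $\mmod T_2(\La)$, and Propositions~\ref{Proposition 3.4} and~\ref{Proposition 3.5} supply enough projectives and enough injectives (using the assumption that $\CX$ has enough injectives). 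Thus it remains to establish condition~(3) of Theorem~\ref{ARS duality}, namely that $\underline{\CS^{\rm{cw}}_{\CX}(\La)}$ is a dualizing $k$-variety.

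The next step is to transport the problem across the stable equivalence furnished by Theorem~\ref{Theoerm 5.1}(2):
$$\underline{\CS^{\rm{cw}}_{\CX}(\La)} \; \simeq \; \mmod \underline{\CX}.$$
Since being a dualizing $k$-variety is invariant under $k$-linear equivalence of additive categories, the task becomes to show $\mmod \underline{\CX}$ is a dualizing $k$-variety.

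For this I would invoke the classical Auslander--Reiten principle: if $\mathcal{C}$ is a dualizing $k$-variety then so is $\mmod \mathcal{C}$. Applying it with $\mathcal{C}=\underline{\CX}$ reduces matters to proving that $\underline{\CX}$ itself is a dualizing $k$-variety. Since $\CX$ is resolving and functorially finite, the Auslander--Smal\o{} theorem gives that the exact category $\CX$ has almost split sequences; combined with our standing assumption that $\CX$ has enough injectives and the identification $\CP(\CX)=\rm{prj}\mbox{-}\La$, a second application of Theorem~\ref{ARS duality}, this time to $\CX$, yields that $\underline{\CX}$ is dualizing. Reading the chain of reductions backwards then concludes the proof.

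The main delicate point I anticipate is checking that the equivalence of Theorem~\ref{Theoerm 5.1}(2) is really a $k$-linear equivalence of Hom-finite additive categories, so that the standard $k$-duality on $\Mod$-categories is transported faithfully across it; this is essentially formal but must be verified because $\underline{\CS^{\rm{cw}}_{\CX}(\La)}$ and $\mmod \underline{\CX}$ inherit their $k$-module structures from different ambient categories. A secondary technical care point is Ext-finiteness in the non-canonical structure $\CE^{\rm{cw}}_{\CX}$, where one must observe that Yoneda equivalence classes of conflations in $\CE^{\rm{cw}}_{\CX}$ form a set bounded by the (already finite) $\Ext^{1}$-groups in the ambient Artin algebra category; the other ingredients---Auslander--Smal\o, Auslander--Reiten's $\mmod$-invariance, and Theorem~\ref{ARS duality}---are then applied as black boxes.
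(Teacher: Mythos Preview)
Your proposal is correct and follows essentially the same path as the paper: verify the hypotheses of Theorem~\ref{ARS duality}, identify $\underline{\CS^{\rm{cw}}_{\CX}(\La)}$ with $\mmod \underline{\CX}$ via the equivalence of Theorem~\ref{Theoerm 5.1}(2) (the paper re-derives this from Subsection~\ref{An equivalence} together with Proposition~\ref{Proposition 3.4}), and then use \cite[Proposition 2.6]{AR1} that $\mmod$ of a dualizing variety is again dualizing. The only minor deviation is in showing $\underline{\CX}$ is a dualizing $k$-variety: the paper does this directly from the functorial finiteness of $\CX$ by observing that $D$ carries $\mmod\underline{\CX}$ into $\mmod\underline{\CX}^{\rm{op}}$, whereas you take the slightly longer detour through Auslander--Smal\o{} and a second application of Theorem~\ref{ARS duality} to $\CX$---both are valid and yield the same conclusion.
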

\begin{proof}
	Since $\CX$ is functorialy finite then the stable category $\underline{\CX}$ becomes a dualizing $k$-variety. Indeed, let $F \in \mmod \underline{\CX}$. Then $DF$ can be regarded as a finitely presented functor in $\mmod \CX^{\rm{op}}$ which vanish on $\rm{prj}\mbox{-}\Lambda$. Hence $DF \in \mmod \underline{\CX}^{\rm{op}}$. Conversely, we can show that $DF' \in \mmod \underline{\CX}$ for any $F'$ in $\mmod \underline{\CX}^{\rm{op}}$. Thanks to \cite[Proposition 2.6]{AR1},	 $\mmod \underline{\CX}$ is a dualizing $k$-variety. Due to the equivalence given in Subsection \ref{An equivalence}, we have an equivalence between the additive quotient category $\CS_{\CX}(\La)/\CU$ and abelian category $\mmod \underline{\CX}$. We recall $\CU$ is an idempotent ideal generated by the object of the form $X\st{1}\rt X$
	and $0\rt X$, where $X$ runs through the objects in $\CX$. In view of the characterization of indecomposable projective objects in the exact category $\CS^{\rm{cw}}_{\CX}(\La)$, one can deduce the quotient category $\CS_{\CX}(\La)/\CU$ is nothing else than the stable category $\underline{\CS^{\rm{cw}}_{\CX}(\La)}$ of the exact category $\CS^{\rm{cw}}_{\CX}(\La)$. Hence by the equivalence we can see that $\underline{\CS^{\rm{cw}}_{\CX}(\La)}$ is a dualizing $k$-variety. Now  Theorem \ref{ARS duality}
	follows that the exact category $\CS^{\rm{cw}}_{\CX}(\La)$ has almost split sequences. We are done.
\end{proof}
We use the above  to strengthen the results  given in \cite[Theorem 2.5]{RS2} and \cite[Proposition 2.5]{H2}  in which are proved $\CS_{\CX}(\La)$ has almost split sequences for the cases $\CX=\mmod \La$ and $\CX=\rm{Gprj}\mbox{-}\La$.
\begin{proposition}\label{Almost-Split First and third}
	Let $\CX$ be a functorialy finite resolving subcategory of $\mmod \La$. Assume the exact category $\CX$ has enough injectives.
	Then the following statements hold true.
	\begin{itemize}
		\item [$(1)$] The exact category $\CS_{\CX}(\La)$ has almost split sequences;
		\item [$(2)$] The exact category $\CS^{\rm{scw}}_{\CX}(\La)$ has almost split sequences.
	\end{itemize}
\end{proposition}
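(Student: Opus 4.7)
The strategy differs for the two exact structures. For (2), I mirror the argument of Proposition \ref{Almost-Split-Secondtype}, but leveraging Theorem \ref{Stableequivalence1} instead of the bare additive equivalence from Subsection \ref{An equivalence}. As already verified in the proof of Proposition \ref{Almost-Split-Secondtype}, the functorial finiteness of $\CX$ ensures that $\mmod \underline{\CX}$ is a dualizing $k$-variety. By Lemma \ref{Lemma 3.19} together with standard functor category theory, $\mmod \underline{\CX}$ has enough projectives and enough injectives, and it is $k$-linear, Krull--Schmidt, Hom-finite and Ext-finite. Theorem \ref{ARS duality} applied to $\mmod \underline{\CX}$ therefore yields that $\mmod \underline{\CX}$ has almost split sequences and, in particular, that $\underline{\mmod \underline{\CX}}$ is itself a dualizing $k$-variety. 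Transporting this along the equivalence $\underline{\CS^{\rm{scw}}_{\CX}(\La)} \simeq \underline{\mmod \underline{\CX}}$ furnished by Theorem \ref{Stableequivalence1}, we conclude that $\underline{\CS^{\rm{scw}}_{\CX}(\La)}$ is a dualizing $k$-variety. Since Propositions \ref{Proposition 3.4} and \ref{Proposition 3.15} provide enough projectives and enough injectives in $\CS^{\rm{scw}}_{\CX}(\La)$, one applies the reverse direction of Theorem \ref{ARS duality} to obtain almost split sequences.

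For (1), an analogous stable-category equivalence is not readily available, so I will instead invoke the Auslander--Smal\o{} theorem \cite[Theorem 2.4]{AS} on extension-closed subcategories of an abelian category with almost split sequences. The ambient abelian category $\rm{H}(\La) \simeq \mmod T_2(\La)$ is the module category of an Artin algebra, hence Krull--Schmidt, Hom-finite and admits almost split sequences, and $\CS_{\CX}(\La)$ is extension-closed in $\rm{H}(\La)$ since $\CX$ is resolving. It therefore suffices to establish that $\CS_{\CX}(\La)$ is functorially finite in $\rm{H}(\La)$. For a right $\CS_{\CX}(\La)$-approximation of an object $(A\st{f}\rt B)$ of $\rm{H}(\La)$, I would begin with a right $\CX$-approximation of $B$, form an appropriate pullback along $f$, and then take a right $\CX$-approximation of this pullback; the resolving hypothesis on $\CX$ (closure under kernels of epimorphisms and extensions) is what guarantees that the resulting map lies in $\CS_{\CX}(\La)$ and genuinely absorbs morphisms from arbitrary objects of $\CS_{\CX}(\La)$. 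Left $\CS_{\CX}(\La)$-approximations are constructed dually using the covariant finiteness of $\CX$.

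The main obstacle lies in (1): verifying functorial finiteness of $\CS_{\CX}(\La)$ in $\rm{H}(\La)$ is a technical pullback/pushout calculation where one must carefully preserve both the monomorphism property and the condition that cokernels remain in $\CX$. In contrast, (2) is essentially a formal transfer of prior results once Theorem \ref{Stableequivalence1} is in hand, with the substantive content encoded in that earlier equivalence and in the dualizing property of $\mmod \underline{\CX}$.
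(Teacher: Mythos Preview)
Your route to (2) is sound and genuinely different from the paper's. The paper does \emph{not} argue via Theorem \ref{Stableequivalence1}; instead it treats (1) and (2) simultaneously by lifting the almost split sequences already constructed in $\CS^{\rm{cw}}_{\CX}(\La)$ (Proposition \ref{Almost-Split-Secondtype}). Concretely, for an indecomposable object that is non-projective in $\CS^{\rm{scw}}_{\CX}(\La)$ (equivalently, not of the form $X\!\to\! X$, $0\!\to\! X$, or $\Omega_{\La}(X)\hookrightarrow P_X$), the almost split sequence $\epsilon$ ending at it in $\CS^{\rm{cw}}_{\CX}(\La)$ is shown to lie already in $\CE^{\rm{scw}}_{\CX}$: the two outer rows split by definition of $\CE^{\rm{cw}}_{\CX}$, and the cokernel row splits because the canonical morphism $(\Omega_{\La}(\mathrm{Cok}\,f)\hookrightarrow P)\to (A\stackrel{f}{\to}B)$ is not a retraction and hence factors through the right almost split map. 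For (1) the same $\epsilon$ works, and the remaining special objects $X\!\to\! X$, $0\!\to\! X$, $\Omega_{\La}(X)\hookrightarrow P_X$ are handled by \cite[Lemma 6.3]{H2}. A dividend of this unified argument is Remark \ref{remark 5.6}: the three Auslander--Reiten translations agree on objects non-projective in $\CS^{\rm{scw}}_{\CX}(\La)$. Your approach to (2) via the dualizing property of $\underline{\mmod\,\underline{\CX}}$ is cleaner in that it avoids the cokernel-splitting computation, but it does not give this comparison for free.

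Your plan for (1), however, has a real gap. You propose to verify functorial finiteness of $\CS_{\CX}(\La)$ in $\rm{H}(\La)$ and invoke \cite[Theorem 2.4]{AS}. The right-approximation sketch is plausible, but the claim that left approximations are ``constructed dually using the covariant finiteness of $\CX$'' is not justified: the resolving hypothesis (closure under kernels of epimorphisms, extensions) is \emph{not} self-dual, and there is no assumption that $\CX$ is closed under cokernels of monomorphisms. A naive pushout construction will in general fail to keep the cokernel in $\CX$, so the dual argument does not go through without substantial additional input. The paper sidesteps this entirely by recycling the $\CS^{\rm{cw}}_{\CX}(\La)$ almost split sequences rather than establishing functorial finiteness of $\CS_{\CX}(\La)$ directly; you should either supply an honest construction of left $\CS_{\CX}(\La)$-approximations (likely requiring the enough-injectives hypothesis on $\CX$ in an essential way) or adopt the paper's lifting strategy.
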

\begin{proof}
	$(1)$ 	We only show that for each indecomposable non-projective object $(A \st{f}\rt B)$ in the exact category $\CS_{\CX}(\La)$ there is an almost split sequence ending at $(A \st{f}\rt B)$. The proof of $\CS_{\CX}(\La)$ has left almost split sequences is in a dual manner.
	By the characterization of indecomposable projectives given in Proposition \ref{projective objects in monomorphism}, we observe  $(A \st{f}\rt B)$ is not isomorphic to neither $P \st{1}\rt P$ nor $(0 \rt P)$, for some indecomposable projective module $P$. First, if it is isomorphic to either $(X\st{1}\rt X), (0 \rt X)$ or $\Omega_{\La}(X)\rt P$ for some indecomposable non-projective $X$, then by \cite[Lemma 6.3]{H2} there is an almost split sequence in $\CS_{\CX}(\La)$ as needed. Assume $(A \st{f}\rt B)$ is not isomorphic to those indecomposable objects in $\CS_{\CX}(\La)$. Hence it is a non-projective indecomposable object in the exact category $\CS^{\rm{cw}}(\La)$. It follows from Proposition \ref{Almost-Split-Secondtype} that there is an almost split sequence as the following in the exact category $\CS^{\rm{cw}}_{\CX}(\La)$ {\footnotesize  \[ \xymatrix@R-2pc {  &  ~ C\ar[dd]^{g}~   & Y\ar[dd]^{h}~  & A\ar[dd]^{f} \\ \epsilon: \  \ \  \  0 \ar[r] &  _{ \ \ \ \ } \ar[r]^{\phi_1}_{\phi_2}  &_{\ \ \ \ \ } \ar[r]^{\psi_1}_{\psi_2} _{\ \ \ \ \ }&  _{\ \ \ \ \ }\ar[r] & 0 \\ & D & Z & B }\]}
	ending at $(A \st{f}\rt B)$. The short exact  sequence $\epsilon$ clearly is in $\CE_{\CX}$, By definition one can see easily the short exact  sequences $\epsilon$ acts also as an almost split sequence in the exact category $\CS_{\CX}(\La)$. $(2)$  By the observation given in  \cite[Construction 5.4]{H2} the sequence $\epsilon$ also lies in $\CE^{\rm{scw}}_{\CX}$. Hence again by definition we can see it also act as an almost split sequence in the exact category $\CS^{\rm{scw}}_{\CX}(\La)$ which ends at $(A \st{f}\rt B)$. For the convenience of the reader let us explain why $\epsilon$ belongs to $\CS^{\rm{scw}}_{\CX}(\La)$. Since we know $\epsilon$ is in $\CE^{\rm{cw}}_{\CX}$ then the rows $0 \rt C \st{\phi_1}\rt Y \st{\psi_1}\rt A\rt 0$ and $0 \rt D\st{\phi_2} \rt Z \st{\psi_2} \rt B\rt 0$ are split. Hence it suffices to show that the induced short exact sequence obtained by getting cokernel
	$$0 \rt \rm{Cok} \ g \st{\mu_1}\rt \rm{Cok} \ h \st{\mu2} \rt \rm{Cok} \ f \rt 0$$
	is split. To this do, consider the morphism $[\sigma_1~~\sigma_2]:(\Omega_{\La}(\rm{Cok}\ f)\rt P)\rt (A\st{f}\rt B)$	obtaining with the following commutative diagram
	$$\xymatrix{
		0 \ar[r] &\Omega_{\La}(\rm{Cok}\ f) \ar[d]^{\sigma_1} \ar[r] & P_{\rm{Cok} \ f} \ar[d]^{\sigma_2}
		\ar[r] &\rm{Cok}\ f \ar@{=}[d]\ar[r] & 0&\\
		0 \ar[r] & A  \ar[r]^{f} & B
		\ar[r] & \rm{Cok}\ f \ar[r] &0.& \\ 	} 	 $$
	But $[\sigma_1~~\sigma_2]$ is not retraction, otherwise it leads  to  $B$ to be a projective. This contradicts  the fact that $(A \st{f}\rt B)$ is not isomorphic to an indecomposable object in the form $\Omega_{\La}(X)\rt P$ for some indecomposable module $X$. Hence $(\sigma_1,~~\sigma_2)$ factors through $(\psi_1,~~\psi_2)$, as $(\psi_1, \psi_2)$ is a right almost split morphism, and consequently $\rm{Id}_{\rm{Cok}\ f}$ factors through $\mu_2$. But this means that $\mu_2$ is split epimorphism, so the result follows. The proof is complete
\end{proof}

Denote by $\underline{\rm{SHom}}(-, -), \ \underline{\rm{SHom}}^{\rm{cw}}(-, -)$ and $\underline{\rm{SHom}}^{\rm{scw}}(-, -)$ the Hom-space in the stable categories  $\underline{\CS_{\CX}(\La)}, \ \underline{\CS^{\rm{cw}}_{\CX}(\La)}$ and $\underline{\CS^{\rm{scw}}_{\CX}(\La)}$,  respectively. Also, $\rm{SExt}^1(-, -), \ \rm{SExt}^1_{\rm{cw}}(-, -)$ and $\rm{SExt}^1_{\rm{scw}}(-, -)$, respectively,  denote the group of extensions in  the exact categories $\CS_{\CX}(\La), \ \CS^{\rm{cw}}_{\CX}(\La)$ and $\CS^{\rm{scw}}_{\CX}(\La)$,  similar notations of whose counterpart for  $\CF_{\CX}(\La)$ can be defined only by changing the   letter ``S'' to ``F''.\\

Combining Theorem \ref{ARS duality} and Proposition \ref{Almost-Split-Secondtype} and \ref{Almost-Split First and third} imply the following theorem.
\begin{theorem}\label{ARSDualityFOrthreeKindExatStrtures}
	Let $\CX$ be a functorialy finite resolving subcategory of $\mmod \La$. Assume the exact category $\CX$ has enough injectives. Then three types of exact structures defined on  $\CS_{\CX}(\La)$ have  Auslander-Reiten-Serre duality. More precisely, let $X=(X_1\st{f}\rt X_2)$ and $Y=(Y_1\st{g} \rt Y_2)$ be two objects in the additive category $\CS_{\CX}(\La)$. Then
	\begin{itemize}
		\item [$(1)$] There is an equivalence functor $\sigma_{\CS\CX}:\underline{\CS_{\CX}(\La)}\rt \overline{\CS_{\CX}(\La)}$ and a binatural isomorphism
		$\eta_{X, Y}:\underline{\rm{SHom}}(X, Y)\simeq D\rm{SExt}^1(Y, \sigma_{\CS\CX} X) .$
		\item [$(2)$]There is an equivalence functor $\sigma^{\rm{cw}}_{\CS\CX}:\underline{\CS^{\rm{cw}}_{\CX}(\La)}\rt \overline{\CS^{\rm{cw}}_{\CX}(\La)}$ and a binatural isomorphism
		$\eta^{\rm{cw}}_{X, Y}:\underline{\rm{SHom}}^{\rm{cw}}(X, Y)\simeq D\rm{SExt}^1_{\rm{cw}}(Y, \sigma^{\rm{cw}}_{\CS\CX} X) .$
		\item [$(3)$]There is an equivalence functor $\sigma^{\rm{scw}}_{\CS\CX}:\underline{\CS^{\rm{scw}}_{\CX}(\La)}\rt \overline{\CS^{\rm{scw}}_{\CX}(\La)}$ and a binatural isomorphism
		$\eta^{\rm{scw}}_{X, Y}:\underline{\rm{SHom}}^{\rm{scw}}(X, Y)\simeq D\rm{SExt}^1_{\rm{scw}}(Y, \sigma^{\rm{scw}}_{\CS\CX} X) .$
	\end{itemize}
\end{theorem}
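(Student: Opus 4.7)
The plan is to apply Theorem \ref{ARS duality} to each of the three exact categories $\CS_{\CX}(\La)$, $\CS^{\rm{cw}}_{\CX}(\La)$ and $\CS^{\rm{scw}}_{\CX}(\La)$ in turn, using the existence of almost split sequences already established in Propositions \ref{Almost-Split-Secondtype} and \ref{Almost-Split First and third}. The binatural isomorphism in each part is then read off as the output of ARS-duality, and the equivalence $\sigma$ between stable and costable categories is exactly what that theorem produces from the existence of almost split sequences.

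First I would verify the common hypotheses of Theorem \ref{ARS duality} for each of the three exact structures. The underlying additive category $\CS_{\CX}(\La)$ sits as a full subcategory of $\mmod T_2(\La)$ closed under direct summands, hence is Krull-Schmidt, and since $\La$ is a $k$-algebra the Hom-spaces are canonically $k$-modules, so the category is $k$-linear in a natural way. Enough projectives and enough injectives hold in each of the three cases by the classifications recorded in Propositions \ref{projective objects in monomorphism}, \ref{Proposition 3.4}, \ref{Proposition 3.5} and \ref{Proposition 3.15}, using the hypothesis that the exact category $\CX$ has enough injectives.

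Next I would check Ext-finiteness in each of the three exact structures. Since $\CS_{\CX}(\La)$ is closed under extensions in $\rm{H}(\La)\simeq \mmod T_2(\La)$, the group $\rm{SExt}^1(X,Y)$ coincides with $\Ext^1_{T_2(\La)}(X,Y)$, which is a finitely generated $k$-module because $T_2(\La)$ is again an Artin $k$-algebra and the relevant objects are finitely generated. The inclusions $\CE^{\rm{scw}}_{\CX}\subseteq \CE^{\rm{cw}}_{\CX}\subseteq \CE_{\CX}$ realise $\rm{SExt}^1_{\rm{cw}}(X,Y)$ and $\rm{SExt}^1_{\rm{scw}}(X,Y)$ as $k$-submodules of the canonical Ext-group, so Ext-finiteness is inherited by the two finer exact structures.

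With these ingredients in place, Propositions \ref{Almost-Split-Secondtype} and \ref{Almost-Split First and third} supply the existence of almost split sequences in all three exact categories, and the equivalence (1) $\Leftrightarrow$ (2) of Theorem \ref{ARS duality} then yields the desired equivalences $\sigma_{\CS\CX}, \sigma^{\rm{cw}}_{\CS\CX}, \sigma^{\rm{scw}}_{\CS\CX}$ together with the binatural isomorphisms $\eta, \eta^{\rm{cw}}, \eta^{\rm{scw}}$. The main bookkeeping obstacle I anticipate is keeping the three ambient exact structures cleanly separated, since their injective objects differ (for example, case 2 requires the injective objects described by left minimal monomorphisms in Proposition \ref{Proposition 3.5}, while case 3 uses Proposition \ref{Proposition 3.15}); once the verifications of the hypotheses are labelled according to the correct exact structure, the theorem applies separately in each case and no new homological work is required.
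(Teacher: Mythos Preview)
Your proposal is correct and follows precisely the route the paper takes: the paper's entire proof is the single line ``Combining Theorem \ref{ARS duality} and Proposition \ref{Almost-Split-Secondtype} and \ref{Almost-Split First and third} imply the following theorem.'' Your write-up is in fact more careful than the paper's, since you explicitly verify the $k$-linearity, Krull--Schmidt, Ext-finiteness, and enough projectives/injectives hypotheses of Theorem \ref{ARS duality} that the paper leaves implicit.
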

By our conventions in the first of the section, showing the ARS-duality by $\sigma_{\mathcal{C}}$ for an exact category $\mathcal{C}$, so we should use the unique ARS-duality of the exact categories $\CS_{\CX}(\La), \ \CS^{\rm{cw}}_{\CX}(\La)$ and $\CS^{\rm{scw}}_{\CX}(\La)$ by $\sigma_{\CS_{\CX}(\La)}, \ \sigma_{\CS^{\rm{cw}}_{\CX}(\La)}$ and $\sigma_{\CS^{\rm{scw}}_{\CX}(\La)}$ respectively, but for brevity we show them by those of presented in the theorem . 
By the above result it would be interesting to compute $\sigma_{\CS\CX}$ for different cases of $\CX$. This is done in \cite{RS2} for the case $\CX=\mmod \La$ by introducing the notion ``$\rm{Mimo}$''.\\

\begin{remark}\label{remark 5.6} Note that  proofs of Proposition \ref{Almost-Split First and third} and \ref{Almost-Split-Secondtype} show when assume $X$ is not a projective object in the exact categories of $\CS^{\rm{scw}}_{\CX}(\La)$, then the Auslander-Reiten translation of $X$ in each of three different exact structures on $\CS_{\CX}(\La)$ are isomorphic to each other. Therefore,  by considering $\sigma_{\CS\CX}X, \sigma^{\rm{cw}}_{\CS\CX}X$ and $\sigma^{\rm{scw}}_{\CS\CX}X$ with no injective summand objects of the relevant exact categories, we have the isomorphism $\sigma_{\CS\CX}X\simeq  \sigma^{\rm{cw}}_{\CS\CX}X\simeq\sigma^{\rm{scw}}_{\CS\CX}X$
	in the additive category $\CS_{\CX}(\La)$.
\end{remark}

Let $\CX$ be the same as in Theorem \ref{ARSDualityFOrthreeKindExatStrtures}. Since $\underline{\CX}$ is a $k$-dualizing variety then there exists the ARS-duality $\sigma_{\CS\CX}:\underline{\CX}\rt \overline{\CX}.$ 
As an application of theorem above, we can give a  description of the $e^1(\sigma_{\CS\CX}X)$ and $e^2(\sigma_{\CS\CX}X)$ via the equivalence $\sigma_{\CX}:\underline{\CX}\rt \overline{\CX}$. Here $e^1$ and $e^2$ denote the evaluation functors from $\rm{H}(\La)$ to $\mmod \La$ in the following way: $e^1(X\st{f}\rt Y):=X$ and $e^2(X\st{f}\rt Y):=Y$ for any $(X\st{f}\rt Y)$ in $\rm{H}(\La)$.  It is well-known that these functors both admit left and right adjoints. The subscriptions $\la$ and $\rho$ are reserved respectively to denote the left and right adjoint functors.
\begin{corollary}(Compare with \cite[Corollary 7.3(3)]{RS2})
	Let $\CX$ be a functorialy finite resolving subcategory of $\mmod \La$. Let $X=(X_1\st{f}\rt X_2)$ be an object in $\CS_{\CX}(\La)$. Then there are isomorphisms $e^1(\sigma_{\CS\CX}X)\simeq \sigma_{\CX}X_2$ and $e^2(\sigma_{\CS\CX}X)\simeq \sigma_{\CX}\rm{Cok} \ f$ in the costable category $\overline{\CX}$.
\end{corollary}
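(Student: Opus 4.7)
By Lemma~\ref{Lemma 4.2}(2), to prove that two objects $V,W\in \overline{\CX}$ are isomorphic it suffices to produce a natural isomorphism of functors $\Ext^1_{\La}(-,V)\mid_{\CX}\simeq \Ext^1_{\La}(-,W)\mid_{\CX}$ on $\CX$. The strategy is therefore to compute, for each of the objects $e^1(\sigma_{\CS\CX}X),\, e^2(\sigma_{\CS\CX}X) \in \overline{\CX}$, the groups $\Ext^1_{\La}(M,-)$ against an arbitrary $M\in\CX$ via the ARS-duality of the exact category $\CS_{\CX}(\La)$ from Theorem~\ref{ARSDualityFOrthreeKindExatStrtures}, and then to match the outcome against $\Ext^1_{\La}(M,\sigma_{\CX}X_2)$, respectively $\Ext^1_{\La}(M,\sigma_{\CX}\rm{Cok} f)$, via the ARS-duality of $\CX$.

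\textbf{The $e^2$ case.} I would test against $Y=(0\rt M)=e^2_{\lambda}M$, which lies in $\CS_{\CX}(\La)$ since $M\in \CX$. Because $e^2_{\lambda}$ is exact and its right adjoint $e^2$ is also exact, $e^2_{\lambda}$ sends a projective resolution of $M$ in $\mmod \La$ to a projective resolution in $\rm{H}(\La)$, so that $\Ext^1_{\rm{H}(\La)}((0\rt M),-)\simeq \Ext^1_{\La}(M,e^2(-))$; since $\CS_{\CX}(\La)$ is extension-closed in $\rm{H}(\La)$, the left side coincides with $\rm{SExt}^1((0\rt M),-)$. On the stable-Hom side, using the classification of projectives of $\CS_{\CX}(\La)$ as $(P\st{1}\rt P)$ and $(0\rt P)$ with $P\in\rm{prj}\mbox{-}\La$ (Proposition~\ref{projective objects in monomorphism}), a direct calculation shows
\[
\underline{\rm{SHom}}(X,(0\rt M))\simeq \underline{\Hom}_{\La}(\rm{Cok} f, M).
\]
Indeed, a morphism $X\rt (0\rt M)$ is a pair $(0,b\colon X_2\rt M)$ with $bf=0$, i.e.\ a map $\rm{Cok} f\rt M$; morphisms factoring through $(P\st{1}\rt P)$ are forced to be zero, while those factoring through $(0\rt P)$ correspond exactly to maps $\rm{Cok} f\rt M$ factoring through some $P\in\rm{prj}\mbox{-}\La$. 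Combining the ARS-duality of $\CS_{\CX}(\La)$ with that of $\CX$ then yields
\[
D\Ext^1_{\La}(M,e^2(\sigma_{\CS\CX}X))\simeq \underline{\rm{SHom}}(X,(0\rt M))\simeq \underline{\Hom}_{\La}(\rm{Cok} f, M)\simeq D\Ext^1_{\La}(M,\sigma_{\CX}\rm{Cok} f),
\]
natural in $M\in \CX$, and Lemma~\ref{Lemma 4.2} gives $e^2(\sigma_{\CS\CX}X)\simeq \sigma_{\CX}\rm{Cok} f$ in $\overline{\CX}$.

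\textbf{The $e^1$ case.} The argument is entirely parallel, using the test object $Y=(M\st{1}\rt M)=e^1_{\lambda}M$ for $M\in\CX$. The functor $e^1_{\lambda}$ is exact with exact right adjoint $e^1$, hence preserves projectives, so $\rm{SExt}^1((M\rt M),\sigma_{\CS\CX}X)\simeq \Ext^1_{\La}(M,e^1(\sigma_{\CS\CX}X))$. A morphism $X\rt (M\st{1}\rt M)$ is determined by its second component $b\colon X_2\rt M$ (with $a=bf$); morphisms factoring through $(P\st{1}\rt P)$ give precisely the $b$ factoring through a projective $\La$-module, and those through $(0\rt P)$ form a subclass of these, so
\[
\underline{\rm{SHom}}(X,(M\st{1}\rt M))\simeq \underline{\Hom}_{\La}(X_2, M).
\]
The two ARS-dualities again match the two sides and deliver $e^1(\sigma_{\CS\CX}X)\simeq \sigma_{\CX}X_2$ in $\overline{\CX}$.

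The main technical point is the explicit identification of the two $\underline{\rm{SHom}}$-groups: one must verify that, under the bijection $\Hom_{\rm{H}(\La)}(X,Y)\simeq \Hom_{\La}(\rm{Cok} f,M)$ or $\Hom_{\La}(X_2,M)$, the ideal of morphisms factoring through the projectives $(P\st{1}\rt P)$ and $(0\rt P)$ of $\CS_{\CX}(\La)$ is carried precisely onto the subgroup of morphisms factoring through $\rm{prj}\mbox{-}\La$. The Ext-compatibility $\rm{SExt}^1(e^i_{\lambda}M,-)\simeq \Ext^1_{\La}(M,e^i(-))$, although standard, also requires the observation that the right adjoints $e^i$ are exact.
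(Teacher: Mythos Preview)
Your proof is correct. For the $e^1$--case your argument is essentially the paper's own: both use the test object $(M\st{1}\rt M)=e^1_{\la}M=e^2_{\rho}M$, both identify $\rm{SExt}^1((M\rt M),\sigma_{\CS\CX}X)\simeq\Ext^1_{\La}(M,e^1(\sigma_{\CS\CX}X))$ via the adjunction $(e^1_{\la},e^1)$, and both obtain $\underline{\rm{SHom}}(X,(M\rt M))\simeq\underline{\Hom}_{\La}(X_2,M)$ --- the paper by invoking the adjunction $(e^2,e^2_{\rho})$, you by a direct hand check of which morphisms factor through $(P\st{1}\rt P)$ or $(0\rt P)$.

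For the $e^2$--case the two approaches genuinely differ. The paper does not test against $(0\rt M)$ inside $\CS_{\CX}(\La)$; instead it transports the ARS--duality along the exact equivalence $\mathcal{C}\rm{ok}:\CS_{\CX}(\La)\rt\CF_{\CX}(\La)$ of Remark~\ref{RemarkSToF}, obtaining $\sigma_{\CF\CX}=\mathcal{C}\rm{ok}\circ\sigma_{\CS\CX}\circ\CK\rm{er}$, and then reruns the $e^1$--style argument in the epimorphism category against $(Y\st{1}\rt Y)$. Since $e^2$ on $\CS_{\CX}(\La)$ corresponds to $e^1$ on $\CF_{\CX}(\La)$ under $\mathcal{C}\rm{ok}$, this yields the second isomorphism. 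Your route is more elementary and self--contained: you remain in $\CS_{\CX}(\La)$, use $(0\rt M)=e^2_{\la}M$, and compute $\underline{\rm{SHom}}(X,(0\rt M))\simeq\underline{\Hom}_{\La}(\rm{Cok}\,f,M)$ directly. The paper's detour has the conceptual payoff of exhibiting the symmetry between $\CS_{\CX}(\La)$ and $\CF_{\CX}(\La)$ under kernel/cokernel, while your argument avoids introducing the epimorphism category altogether and makes the verification of the stable--Hom identification completely explicit.
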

\begin{proof}
	We begin to  prove the first isomorphism. Let $A$ be an arbitrary object in $\CX$. By the functorial isomorphism given in Theorem \ref{ARSDualityFOrthreeKindExatStrtures}(1) for $Y=(A \st{1}\rt A)$ we have the functorial  isomorphism
	$$\underline{\rm{SHom}}(X, Y)\simeq D\rm{SExt}^1(Y, \sigma_{\CS\CX} X)  \ \  \ \  (*)$$
	The adjoint pairs $(e^2, e^2_{\rho})$ and $(e^1_{\la}, e^1)$ respectively imply the following functorial isomorphisms
	$$\underline{\rm{SHom}}(X, Y)\simeq \underline{\Hom}_{\La}(X_2, A) \ \ \ \ \text{and}$$
	$$\rm{SExt}^1(Y, \sigma_{\CS\CX} X) \simeq \Ext^1_{\La}(A, e^1(\sigma_{\CS\CX}X)).$$
	Hence in view of $(*)$  we get the following functorial isomorphism
	$$\underline{\Hom}_{\La}(X_2, A)\simeq D\Ext^1_{\La}(A, e^1(\sigma_{\CS\CX}X)) \ \ \ \ (**)$$
	Moreover, since the exact category $\CX$ has the  ARS-duality, say $\sigma_{\CX}$,  then we have the following functorial isomorphism
	$$\underline{\Hom}_{\La}(X_2, A)\simeq D\Ext^1_{\La}(A, \sigma_{\CX}X_2) \ \ \ \ (***)$$
	Putting together
	$(**)$ and $(***)$ implies the isomorphism of  functors $\Ext^1_{\La}(-, e^1(\sigma_{\CS\CX}X)) \simeq \Ext^1_{\La}(-, \sigma_{\CX}X_2)$. Now Lemma \ref{Lemma 4.2} completes the first part. According to Remark \ref{RemarkSToF}, there are mutually inverse exact equivalences $\CK\rm{
		er}$ and $\mathcal{C}\rm{ok}$ between the canonical  exact categories $\CS_{\CX}(\La)$ and $\CF_{\CX}(\La)$. Let $A$ and $B$  be in  $\CF_{\CX}(\La)$. Therefore, since they are exact functors then they induce mutually inverse equivalences between their (co)stable categories and also functorial isomorphisms between their groups of extensions.  Consider the following functorial isomorphisms obtained by those facts:
	\begin{align*}
	\underline{\rm{FHom}}(A, B)&\simeq \underline{\rm{SHom}}(\CK\rm{er} \ A, \  \CK\rm{er} \ B) \\
	&\simeq  D\rm{SExt}^1(\CK\rm{er} \ B, \ \sigma_{\CS\CX}\circ \CK\rm{er} \ A) \\
	& \simeq D\rm{FExt}^1( B, \  \mathcal{C}\rm{ok}\circ\sigma_{\CS\CX}\circ \CK\rm{er} \ A)\\
	\end{align*}
	Hence $\mathcal{C}\rm{ok}\circ\sigma_{\CS\CX}\circ \CK\rm{er}$ gives an ARS duality for the exact category $\CF_{\CX}(\La)$. Put $A=\mathcal{C}\rm{ok} \ X$ and $B=(Y\st{1}\rt Y)$ in the above functorial  isomorphism. Then we get
	$$\underline{\rm{FHom}}(\mathcal{C}\rm{ok} \ X, Y) \simeq D\rm{FExt}^1(Y, \  \mathcal{C}\rm{ok}\circ\sigma_{\CS\CX} X).$$
	Applying the adjoint pairs and using the same argument as given in the first of the proof, we obtain the following functorial isomorphism
	$$\underline{\Hom}_{\La}(e^2(\mathcal{C}\rm{ok} \ X), Y)\simeq D\Ext^1_{\La}(Y, e^1(\mathcal{C}\rm{ok}\circ\sigma_{\CS\CX} X)) \ \ \ \ (\dagger).$$
	But $e^1(\mathcal{C}\rm{ok} \ X)=\rm{Cok} \ f $ and $e^1(\mathcal{C}\rm{ok}\circ\sigma_{\CS\CX} X)= e^2(\sigma_{\CS\CX} X)$. By considering these equalities in the $(\dagger)$, we reach
	$$\underline{\Hom}_{\La}(\rm{Cok} \ f, Y)\simeq D\Ext^1_{\La}(Y, e^2(\sigma_{\CS\CX} X)) \ \ \ \ (\dagger\dagger).$$
	Analogous to the first part, since $\CX$ has ARS duality then there exists the following functorial isomorphism
	$$\underline{\Hom}_{\La}(\rm{Cok} \ f, Y)\simeq D\Ext^1_{\La}(Y, \sigma_{\CX}\rm{Cok} \ f) \ \ \ \ (\dagger\dagger\dagger)$$
	Lemma \ref{Lemma 4.2}(2) together with $(\dagger\dagger)$ and $(\dagger\dagger\dagger)$ imply the second isomorphism in the statement.
\end{proof}
In above corollary a local description of $\sigma_{\CS\CX}$ is given without having an explicit formula of it,  in contrast to \cite[Corollary 7.3(3)]{RS2} where an explicit computation of the Auslander-Reiten translation in $\CS(\La)$ is used to prove it. \\

By applying the result of Sections $4$ and $5$, We will give a relationship between ARS-duality for the exact category $\CS^{\rm{scw}}_{\CX}(\La)$ and $\mmod \underline{\CX}$. To show Hom-space in the stable category $\underline{\mmod \underline{\CX}}$ and group extension in $\mmod \underline{\CX}$, in the next result,  we will use $\underline{\rm{Hom}}_{\underline{\CX}}(-, -)$ and $\Ext^1_{\underline{\CX}}(-, -)$, respectively.
\begin{proposition}
	Let $\CX$ be a functorialy finite resolving subcategory of $\mmod \La$. Assume the exact category $\CX$ has enough injectives. Consider the following equivalences
	$$\underline{\mmod \underline{\CX}}\st{(\underline{\Psi_{\CX}})^{-1}}\lrt \underline{\CS^{\rm{scw}}_{\CX}(\La)}\st{\sigma^{\rm{scw}}_{\CS\CX}}\lrt \overline{\CS^{\rm{scw}}_{\CX}(\La)}\st{\overline{\Psi_{\CX}}}\lrt \overline{\mmod \underline{\CX}}, $$
	where $(\underline{\Psi_{\CX}})^{-1}$ and $\overline{\Psi_{\CX}}$ defined in Remark \ref{remark 4.3}.
	Then the composition $\overline{\Psi_{\CX}}\circ \sigma^{\rm{scw}}_{\CS\CX} \circ (\underline{\Psi_{\CX}})^{-1}$ is an ARS-duality for $\mmod \underline{\CX}$.	 
\end{proposition}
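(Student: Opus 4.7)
The plan is to verify the two requirements of an ARS-duality for $\mmod \underline{\CX}$: that $\sigma := \overline{\Psi_{\CX}} \circ \sigma^{\rm{scw}}_{\CS\CX} \circ (\underline{\Psi_{\CX}})^{-1}$ is an equivalence, and that there is a binatural isomorphism $\underline{\Hom}_{\underline{\CX}}(F, G) \simeq D\Ext^1_{\underline{\CX}}(G, \sigma F)$ for $F, G \in \mmod \underline{\CX}$. The first requirement is immediate, since each of the three factors is an equivalence: $\underline{\Psi_{\CX}}$ by Theorem \ref{Stableequivalence1}, $\overline{\Psi_{\CX}}$ by Remark \ref{remark 4.3}, and $\sigma^{\rm{scw}}_{\CS\CX}$ by Theorem \ref{ARSDualityFOrthreeKindExatStrtures}(3).

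To construct the binatural isomorphism, given $F, G$ in $\mmod \underline{\CX}$ I would use the density of $\Psi_{\CX}$ (Theorem \ref{Theoerm 5.1}(1)) to pick objects $X, Y \in \CS^{\rm{scw}}_{\CX}(\La)$ with $\Psi_{\CX}(X) \simeq F$ and $\Psi_{\CX}(Y) \simeq G$. Combining the stable equivalence of Theorem \ref{Stableequivalence1} with the ARS-duality for $\CS^{\rm{scw}}_{\CX}(\La)$ from Theorem \ref{ARSDualityFOrthreeKindExatStrtures}(3) then yields
\begin{align*}
\underline{\Hom}_{\underline{\CX}}(F, G) &\simeq \underline{\rm{SHom}}^{\rm{scw}}(X, Y) \simeq D\rm{SExt}^1_{\rm{scw}}(Y, \sigma^{\rm{scw}}_{\CS\CX} X).
\end{align*}
The task thus reduces to establishing a natural isomorphism $\rm{SExt}^1_{\rm{scw}}(Y, \sigma^{\rm{scw}}_{\CS\CX} X) \simeq \Ext^1_{\underline{\CX}}(G, \sigma F)$, where $\sigma F$ is represented on the nose by $\Psi_{\CX}(\sigma^{\rm{scw}}_{\CS\CX} X)$ in $\overline{\mmod \underline{\CX}}$.

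For this remaining step, the key observation is that the exact functor $\Psi_{\CX}:\CS^{\rm{scw}}_{\CX}(\La) \to \mmod \underline{\CX}$ sends injective objects to injective objects. Indeed, by Proposition \ref{Proposition 3.15} the indecomposable injectives of $\CS^{\rm{scw}}_{\CX}(\La)$ are of three types; those of types $(1)$ and $(3)$ are sent to the zero functor by direct inspection of the construction of $\Psi_{\CX}$, while a type $(2)$ object $X \st{l}\to I$ is sent to $\Ext^1_\La(-, X)|_\CX$, which is injective in $\mmod \underline{\CX}$ by Lemma \ref{Lemma 3.19}. Fixing an injective copresentation $0 \to \sigma^{\rm{scw}}_{\CS\CX} X \to J \to J' \to 0$ in $\CE^{\rm{scw}}_{\CX}$ and applying the exact $\Psi_{\CX}$ therefore produces an injective copresentation in $\mmod \underline{\CX}$. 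Since in an exact category with enough injectives $\Ext^1(-, B)$ is recovered from $\overline{\Hom}(-, B')$ (morphisms modulo those factoring through an injective), we obtain
$$\rm{SExt}^1_{\rm{scw}}(Y, \sigma^{\rm{scw}}_{\CS\CX} X) \simeq \overline{\rm{SHom}}^{\rm{scw}}(Y, J') \quad \text{and} \quad \Ext^1_{\underline{\CX}}(G, \sigma F) \simeq \overline{\Hom}_{\underline{\CX}}(G, \Psi_{\CX} J'),$$
and the costable equivalence $\overline{\Psi_{\CX}}:\overline{\CS^{\rm{scw}}_{\CX}(\La)} \simeq \overline{\mmod \underline{\CX}}$ of Remark \ref{remark 4.3} identifies these two.

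The main obstacle I anticipate is tracking naturality in both variables through the whole chain of isomorphisms and confirming that the identifications made via the chosen lifts $X, Y$ are independent of the choices (up to the passage to the stable and costable categories). The injective-preservation of $\Psi_{\CX}$ is a direct case analysis via Proposition \ref{Proposition 3.15} and Lemma \ref{Lemma 3.19}; the delicate part is weaving it together with the ARS-duality for $\CS^{\rm{scw}}_{\CX}(\La)$ and the stable/costable equivalences of Theorem \ref{Stableequivalence1} and Remark \ref{remark 4.3} to produce a single binatural isomorphism certifying $\sigma$ as the ARS-duality of $\mmod \underline{\CX}$.
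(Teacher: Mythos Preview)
Your argument is correct and follows the same overall strategy as the paper: transport the ARS-duality of $\CS^{\rm{scw}}_{\CX}(\La)$ through the stable and costable equivalences $\underline{\Psi_{\CX}}$ and $\overline{\Psi_{\CX}}$. The only genuine difference lies in how you justify the key isomorphism
\[
\rm{SExt}^1_{\rm{scw}}\bigl((\underline{\Psi_{\CX}})^{-1}(G),\, \sigma^{\rm{scw}}_{\CS\CX}(\underline{\Psi_{\CX}})^{-1}(F)\bigr)\;\simeq\;\Ext^1_{\underline{\CX}}(G,\sigma F).
\]
The paper obtains this by asserting that the map ``push an extension through the exact functor $\Psi_{\CX}$'' is an isomorphism of groups, without further argument. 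You instead observe (via Proposition~\ref{Proposition 3.15} and Lemma~\ref{Lemma 3.19}) that $\Psi_{\CX}$ preserves injectives, feed an injective copresentation of $\sigma^{\rm{scw}}_{\CS\CX}X$ through $\Psi_{\CX}$, and then identify $\Ext^1$ with costable $\Hom$ on both sides, finally invoking the costable equivalence $\overline{\Psi_{\CX}}$. This dimension-shift route is a bit longer but has the advantage of making the bijectivity transparent, whereas the paper's direct assertion leaves the injectivity of the induced map on extension groups implicit. Both approaches yield the same binatural isomorphism; yours simply supplies more of the bookkeeping.
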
 
 
\begin{proof}
	Let $F$ and $G$ be in $\mmod \underline{\CX}$. Then, there are  following natural isomorphisms of groups
	
	\begin{align*}
	\underline{\rm{Hom}}_{\underline{\CX}}(F, G)&\simeq\underline{\rm{SHom}}^{\rm{scw}}((\underline{\Psi_{\CX}})^{-1}(F), \  (\underline{\Psi_{\CX}})^{-1}(G)) \\
	&\simeq D\rm{SExt}^1_{\rm{scw}}((\underline{\Psi_{\CX}})^{-1}(G), \ \sigma^{\rm{scw}}_{\CS\CX}\circ (\underline{\Psi_{\CX}})^{-1}(F) ) \\
	&\simeq D\rm{Ext}^1_{\underline{\CX}}(G , \ \underline{\Psi_{\CX}}\circ \sigma^{\rm{scw}}_{\CS\CX}\circ (\underline{\Psi_{\CX}})^{-1}(F)).\\
	\end{align*}
The first isomorphism 	follows from the equivalence $(\underline{\Psi_{\CX}})^{-1}$ and the second isomorphism from Theorem \ref{ARSDualityFOrthreeKindExatStrtures}. Sending an extension of $\sigma^{\rm{scw}}_{\CS\CX}\circ (\underline{\Psi_{\CX}})^{-1}(F)$ by $(\underline{\Psi_{\CX}})^{-1}(G)$ in the exact category $\CS_{\CX}^{\rm{scw}}(\La)$ , say, $0\rt \sigma^{\rm{scw}}_{\CS\CX}\circ (\underline{\Psi_{\CX}})^{-1}(F)\rt X\rt (\underline{\Psi_{\CX}})^{-1}(G)\rt 0$ into the extension $0\rt \Psi_{\CX} (\sigma^{\rm{scw}}_{\CS\CX}\circ (\underline{\Psi_{\CX}})^{-1}(F))\rt \Psi_{\CX}(X)\rt \Psi_{\CX}((\underline{\Psi_{\CX}})^{-1}(G))\rt 0$ by applying the exact functor $\Psi_{\CX}$ gives us an isomorphism of groups from $\rm{SExt}^1_{\rm{scw}}((\underline{\Psi_{\CX}})^{-1}(G), \ \sigma^{\rm{scw}}_{\CS\CX}\circ (\underline{\Psi_{\CX}})^{-1}(F) )$ to 
$\rm{Ext}^1_{\underline{\CX}}(G , \ \underline{\Psi_{\CX}}\circ \sigma^{\rm{}scw}_{\CS\CX}\circ (\underline{\Psi_{\CX}})^{-1}(F))$, so the last isomorphism holds in the above by applying the duality $D$.  	
\end{proof}
 The above result in view of Remark \ref{remark 5.6} provides a nice connection between the Auslander-Reiten translation in $\CS_{\CX}(\La)$ and $\mmod \underline{\CX}$, see also \cite[Section 5]{H2}.

\section{Equivalence of singularity   categories}

In Section 3 we showed  by defining the new exact structure (third type) on $\CS_{\CX}(\La)$, the functor $\Psi_{\CX}$ becomes an exact functor . Enhancing $\CS_{\CX}(\La)$ with this new exact structure,  the  functor  $\Psi_{\CX}$ induces a triangle functor between  the corresponding derived. Then we will show that  the induced triangle functor gives the triangle equivalences in Theorem \ref{Theorem 1.1}. Then,  we will give  an interesting  application for singularly equivalences of Morita type.\\

Since the functor $\Psi_{\CX}$ is additive then by applying terms by terms over complexes of $\CS^{\rm{scw}}_{\CX}(\La)$,  we can obtain a triangle functor $\K^{\rm{b}}\Psi_{\CX}:\K^{\rm{b}}(\CS^{\rm{scw}}_{\CX}(\La))\rt \K^{\rm{b}}(\mmod \underline{\CX})$. On the other hand,  since $\Psi_{\CX}$ preserves the short exact sequences in the exact category $\CS^{\rm{scw}}_{\CX}(\La)$, therefore the acyclic complexes $\rm{Ac}^{\rm{b}}(\CS^{\rm{scw}}_{\CX}(\La))$ are mapped into $\rm{Ac}^{\rm{b}}(\mmod \underline{\CX})$. Hence $\K^{\rm{b}}\Psi_{\CX}$ induces the triangle functor $\mathbb{D}^{\rm{b}}\Psi_{\CX}:\D^{\rm{b}}(\CS^{\rm{scw}}_{\CX}(\La))\rt \D^b(\mmod \underline{\CX})$. Before proceeding to the proofs of our results, we need to give some facts which are known  for abelian categories, but we require to state them in the setting of exact categories. We refer to \cite{P} for a recent  reference of such backgrounds we need. Let $\mathcal{C}$ be a  weakly split idempotent small exact category. By the definition, we define the $\Ext$ group $\Ext_{\mathcal{C}}^n(X, Y)=\Hom_{\D^{\rm{b}}(\CS^{\rm{scw}}_{\CX}(\La))}(X, Y[n])$, where an object in the exact category viewed as a complex concentrated in degree zero.
The Ext groups with negative numbers
are zero, i.e., $\Ext^n_{\mathcal{C}}(X,Y)=0$ for $n<0$.
The natural morphism $\Hom_{\mathcal{C}}(X,Y)\rt \Ext^0_{\mathcal{C}}(X,Y)$ is
an isomorphism; in other words, the functor $\mathcal{C}\rt \D^b(\mathcal{C})$
is fully faithful.
The Ext groups with positive numbers $n>0$ are computed by
the following Yoneda construction.
Consider the class $\rm{Yon}^n_{\mathcal{C}}(X,Y)$ of all $n$-extensions $A^\bu=(0 \rt Y\rt A^{n-1}\rt \cdots \rt A^0\rt X\rt 0)$ in the exact category $\mathcal{C}$. Viewing an $n$-extension as a complex, it is just an acyclic complex over $\mathcal{C}$.
Two $n$-extensions $A$ and $B\in\rm{Yon}^n_{\mathcal{C}}(X,Y)$ are said to be equivalent
if there exists a third element $C\in\rm{Yon}^n_{\mathcal{C}}(X,Y)$ and two morphisms
of $n$-extensions $C\rt A$ and $C\rt B$,  both are identity on the induced morphisms of  the terms $Y$ and $X$. A morphism between two $n$-extensions is only a chain map of complexes whenever the $n$-extensions considered  as complexes.
This is indeed an equivalence relation and let  $\rm{Yon}^n_{\mathcal{C}}(X,Y)$, with the same symbol, denote the set of all the (Yoneda) equivalences which is canonically bijective to $\rm{Ext}^n_{\mathcal{C}}(X,Y)$.
In fact, the assignment  of the Yoned equivalence class $[0 \rt Y \rt A^{n-1}\rt \cdots \rt A^0\st{g}\rt  X\rt 0]$ into the equivalence class of roof $X \st{s}\leftarrow Z \st{f}\rt Y[n]$ makes an isomorphism of groups. Here $Z=(0 \rt Y \rt A^{n-1}\rt \cdots \rt A^0\rt 0)$, that is a complex with $A^0$ at degree zero, $s$ is a  chain map such that in all degrees is zero except the 0-th degree that is $g$, and $f$ is a chain map with zero in all degrees except the $n$-th degree that is the identity $\rm{Id}_Y$.\\
Let $X$ and $Y$ be in the exact category $\CS^{\rm{scw}}_{\CX}(\La)$. Consider the following composition $A_{[X,Y]}$ of homomorphisms of groups
{\tiny $$ \rm{Yon}^n_{\CS^{\rm{scw}}_{\CX}(\La)}(X, Y) \st{\simeq}\rt\Hom_{\D^{\rm{b}}(\CS^{\rm{scw}}_{\CX}(\La))}(X, Y[n])\rt \Hom_{\D^{\rm{b}}(\mmod \underline{\CX})}(\Psi_{\CX}(X), \Psi_{\CX}(Y)[n]) \st{\simeq} \rt \rm{Yon}^n_{\mmod\underline{\CX}}(\Psi_{\CX}(X), \Psi_{\CX}(Y)),$$}
where the first and last isomorphisms are just the canonical isomorphisms introduced in the above for any exact category, which here specialized for the exact categories $\CS^{\rm{scw}}_{\CX}(\La)$ and $\mmod \underline{\CX}$, and the middle groups homomorphism is induced by the functor $\D^{\rm{b}}\Psi_{\CX}$ . Note that by the definition for any $Z \in \CS^{\rm{scw}}_{\CX}(\La)$, we have $\D^{\rm{b}}\Psi_{\CX}(Z)=\Psi_{\CX}(Z)$. By following only the definitions of the groups homomorphisms involved in the above composition we observe the Yoneda equivalence class of an $n$-extension, say $0 \rt Y \rt A^{n-1}\rt \cdots \rt A^0\rt X\rt0,$ in $\rm{Yon}^n_{\CS^{\rm{scw}}_{\CX}(\La)}(X, Y)$ is mapped into the Yoneda equivalence class of an $n$-extension in $\rm{Yon}^n_{\mmod\underline{\CX}}(\Psi_{\CX}(X), \Psi_{\CX}(Y))$, just by  applying terms by terms of the functor $\Psi_{\CX}$, i.e., $0 \rt \Psi_{\CX}(Y) \rt \Psi_{\CX}(A^{n-1})\rt \cdots \rt \Psi_{\CX}(A^0)\rt \Psi_{\CX}(X)\rt0.$ \\
 
We start with the following vital  lemma.
\begin{lemma}\label{Lemma6.1}
For any $X$ and $Y$ in $\CS^{\rm{scw}}_{\CX}(\La)$, the group morphism $A_{[X, Y]}:  \rm{Yon}^n_{\CS^{\rm{scw}}_{\CX}(\La)}(X, Y)\rt \rm{Yon}^n_{\mmod\underline{\CX}}(\Psi_{\CX}(X), \Psi_{\CX}(Y))$, defined in the above, is  surjective.
\end{lemma}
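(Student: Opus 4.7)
My plan is to prove surjectivity by induction on $n$, reducing to the base case $n=1$ and splicing higher extensions together.

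For the base case, given an SES $\eta \colon 0 \to \Psi_{\CX}(Y) \to G \to \Psi_{\CX}(X) \to 0$ in $\mmod \underline{\CX}$, I will construct an SES in $\CE^{\rm{scw}}_{\CX}$ mapping to $\eta$ under $\Psi_{\CX}$. Writing $X=(X_1\to X_2)$ and $Y=(Y_1\to Y_2)$ with cokernels $C_X,C_Y\in\CX$, the defining four-term exact sequence $0\to(-,X_1)\to(-,X_2)\to(-,C_X)\to\Psi_{\CX}(X)\to 0$, and its analogue for $Y$, are partial projective resolutions of length two in $\mmod\CX$ (each $(-,A)$ with $A\in\CX$ is projective since $\CX\supseteq\rm{prj}\mbox{-}\La$). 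Applying the Horseshoe lemma to $\eta$ relative to these resolutions produces a compatible partial resolution of $G$ whose terms are the representables on $Y_i\oplus X_i$, with twisted block-upper-triangular differentials. Translating via the Yoneda lemma to $\mmod\La$ yields an object $B=(B_1\to B_2)\in\CS_\CX(\La)$ with $B_i=Y_i\oplus X_i$ and monomorphism of the form $\bigl(\begin{smallmatrix}f_Y&\phi\\0&f_X\end{smallmatrix}\bigr)$, together with an SES $0\to Y\to B\to X\to 0$ in $\rm{H}(\La)$ whose first two rows of the induced $3\times 3$ diagram split by construction. To force the cokernel row to split as well, I use that $\phi$ may be adjusted within its Yoneda equivalence class by composing with morphisms factoring through the $\CS^{\rm{scw}}$-projective objects $\Omega_\La(C_X)\hookrightarrow P_{C_X}$ and $0\to X$ classified in Section 3; since these have vanishing $\Psi_{\CX}$-image, such modifications preserve the class of $\eta$ while killing the obstruction in $\Ext^1_\La(C_X,C_Y)$, thereby placing the SES in $\CE^{\rm{scw}}_{\CX}$.

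For the inductive step ($n\geq 2$), let $K=\ker(F^0\to\Psi_{\CX}(X))$; by density of $\Psi_{\CX}$, there exists $W\in\CS_\CX(\La)$ with $K\simeq\Psi_{\CX}(W)$. The $n$-extension then splits into the SES $0\to K\to F^0\to\Psi_{\CX}(X)\to 0$ and the $(n-1)$-extension $0\to\Psi_{\CX}(Y)\to F^{n-1}\to\cdots\to F^1\to K\to 0$. The base case lifts the SES to $0\to W\to Z^0\to X\to 0$ in $\CE^{\rm{scw}}_{\CX}$, and the inductive hypothesis lifts the $(n-1)$-extension to $0\to Y\to Z^{n-1}\to\cdots\to Z^1\to W\to 0$ in $\CS^{\rm{scw}}_{\CX}(\La)$. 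Splicing at $W$, with the new differential $Z^1\to Z^0$ equal to the composition $Z^1\twoheadrightarrow W\hookrightarrow Z^0$, produces an $n$-extension in $\CS^{\rm{scw}}_{\CX}(\La)$: the factorization through $W$ supplies exactly the required admissible-epi/admissible-mono decomposition at the splice. Because $\Psi_{\CX}$ is exact, splicing commutes with $\Psi_{\CX}$, so the image is Yoneda equivalent to the original $n$-extension.

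The principal obstacle is the base case---specifically, ensuring that the lift lands in the strictest of the three exact structures, $\CE^{\rm{scw}}_{\CX}$, rather than merely in $\CE_{\CX}$ or $\CE^{\rm{cw}}_{\CX}$. The Horseshoe lemma guarantees splitness only at projective levels, producing split first and second rows but not automatically a split cokernel row. The delicate point is to exploit the full classification of projective objects in $\CS^{\rm{scw}}_{\CX}(\La)$ from Section 3---most notably $\Omega_\La(X)\hookrightarrow P_X$---to adjust the twist term in the Horseshoe output, killing the residual Ext obstruction for the third row without altering the $\Psi_{\CX}$-image. Once this correction is in place, the remaining inductive splicing is formal.
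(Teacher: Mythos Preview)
Your overall strategy---lift each short exact sequence via the Horseshoe lemma in $\mmod\CX$, then splice---is precisely the paper's proof; the paper simply lifts all $n$ pieces at once rather than organizing it as an induction, which is cosmetic.

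However, you have invented a difficulty that does not exist, and your proposed fix for it is both unnecessary and not justified as written. The Horseshoe lemma is applied in $\mmod\CX$ to the \emph{three}-term projective resolutions
\[
0\to(-,X_1)\to(-,X_2)\to(-,C_X)\to\Psi_\CX(X)\to 0,
\]
and all three terms, including $(-,C_X)$, are projective there (this is Yoneda's lemma, incidentally, not the inclusion $\prj\La\subseteq\CX$). Hence the Horseshoe output has split columns at \emph{all three} projective levels, yielding
\[
0\to(-,Y_1\oplus X_1)\to(-,Y_2\oplus X_2)\to(-,C_Y\oplus C_X)\to G\to 0.
\]
Evaluating this exact sequence at $\La\in\CX$ and using $G(\La)=0$ (as $G\in\mmod\underline{\CX}$) shows that $0\to B_1\to B_2\to C_Y\oplus C_X\to 0$ is exact in $\mmod\La$; thus $\rm{Cok}(B_1\to B_2)=C_Y\oplus C_X$ and the cokernel row of your $3\times 3$ diagram is the canonical split sequence. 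The lift therefore lands directly in $\CE^{\rm{scw}}_\CX$. There is no residual obstruction in $\Ext^1_\La(C_X,C_Y)$ to kill, and the paragraph about adjusting $\phi$ through $\Omega_\La(C_X)\hookrightarrow P_{C_X}$ should simply be deleted. Once that is removed, your argument coincides with the paper's.
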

\begin{proof}
Take an $n$-extension $F^{\bu}=(0 \rt \Psi_{\CX}(Y)\st{d^n}\rt F^{n-1}\rt \cdots \rt F^0\st{d^0}\rt \Psi_{\CX}(X)\rt 0)$. Assume that $K^i=\Ker(F^i\st{d^i}\rt F^{i-1})$, $0 \leq i \leq n-1$, and set $F^{-1}:= \Psi_{\CX}(X)$. By our notation $K^{n-1}=\Psi_{\CX}(Y)$ and also set $K^{-1}:= \Psi_{\CX}(X)$. For any $0 \leq i \leq n-1$, the $n$-extension gives the short exact sequence $\epsilon_i: \ 0 \rt K^i\rt F^i\rt K^{i-1}\rt 0$ in $\mmod \underline{\CX}$. Since $\Psi_{\CX}$ is dense we can find $A^i \in \CS_{\CX}(\La)$, and for any $-1 \leq i \leq n-1$, such that $\Psi_{\CX}(A^i)=K^i$, for $i= -1, n-1$, take $K^{-1}=X$ and $K^{n-1}=Y,$ respectively. By using the horseshoe lemma for each the obtained  short exact sequence in $\mmod \underline{\CX}$, we can construct a short exact sequence $\eta_i: \ 0 \rt A^i \rt M^i \rt M^{i-1}\rt 0$ in $\CE^{\rm{scw}}_{\CX}$ which is mapped into $\epsilon_i$  by applying $\Psi_{\CX}$. By splicing the short exact sequences $\eta_i$ together, we then obtain  an $n$-extension $G^{\bu}$ of $X$ and $Y$ in $\CS^{\rm{scw}}_{\CX}(\La)$. By construction we observe $A_{[X, Y]}([G^{\bu}])=[F^{\bu}]$, so the result.   
\end{proof}
An immediate consequence of the above lemma is the induced  groups homorphism
$$\Hom_{\D^{\rm{b}}(\CS^{\rm{scw}}_{\CX}(\La))}(X^{\bu}, Y^{\bu}) \rt \Hom_{\D^b(\mmod \underline{\CX})}(\D^{\rm{b}}\Psi_{\CX}(X^{\bu}), \D^{\rm{b}}\Psi_{\CX}(Y^{\bu}))$$ by $\mathbb{D}^{\rm{b}}\Phi_{\CX}$
is surjective. That we will use this fact in the next result. It might be possible to show directly  the surjectivity of the induced groups homorphism, but  using the Yoneda construction, as used to prove Lemma \ref{Lemma6.1}, helps to prove the fact easier.

A bounded complex over an additive category $ِ\CA$ is called  of length at most $n$ if there are at most $n$ terms to be non-zero.
\begin{proposition}\label{FullProposition}
	The triangle functor $\mathbb{D}^{\rm{b}}\Phi_{\CX}:\D^{\rm{b}}(\CS^{\rm{scw}}_{\CX}(\La))\rt \D^b(\mmod \underline{\CX})$ is full.
\end{proposition}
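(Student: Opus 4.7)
The plan is to argue by induction on the lengths of $X^{\bu}$ and $Y^{\bu}$, where the length of a bounded complex is the number of degrees in which it is nonzero. For the base case, when $X^{\bu}$ and $Y^{\bu}$ are both stalk complexes (possibly shifted), the identification $\Hom_{\D^{\rm{b}}(\CS^{\rm{scw}}_{\CX}(\La))}(X, Y[n]) \simeq \rm{Yon}^n_{\CS^{\rm{scw}}_{\CX}(\La)}(X, Y)$ recalled before Lemma \ref{Lemma6.1} identifies the map induced by $\D^{\rm{b}}\Psi_{\CX}$ with precisely $A_{[X, Y]}$, whose surjectivity is the content of Lemma \ref{Lemma6.1}. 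Compatibility with shifts is automatic because both derived categories are triangulated and $\D^{\rm{b}}\Psi_{\CX}$ commutes with the shift functor.

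For the inductive step, suppose $X^{\bu}$ has length at least two and is concentrated in degrees $[a,b]$. The stupid-truncation short exact sequence
$$0\rt X^{a}[-a]\rt X^{\bu}\rt X^{\bu}_{>a}\rt 0$$
in the category of bounded complexes is split in each degree, hence is a conflation in the natural exact structure and yields a distinguished triangle in $\D^{\rm{b}}(\CS^{\rm{scw}}_{\CX}(\La))$ whose left-hand term is a stalk and whose right-hand term $X^{\bu}_{>a}$ has length one less than $X^{\bu}$. Applying $\D^{\rm{b}}\Psi_{\CX}$ yields the parallel triangle in $\D^{\rm{b}}(\mmod \underline{\CX})$, and applying $\Hom_{\D^{\rm{b}}}(-, Y^{\bu})$ to both produces a commutative ladder of long exact sequences in which the columns at the stalk and at $X^{\bu}_{>a}$ (and at their shifts) are surjective by the inductive hypothesis. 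A symmetric induction with $\Hom_{\D^{\rm{b}}}(X^{\bu}, -)$ applied to a truncation of $Y^{\bu}$ reduces the remaining case to $Y^{\bu}$ being a stalk as well.

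The main obstacle is that the standard five-lemma for epimorphisms applied to this ladder also requires \emph{injectivity} at the adjacent column $\Hom_{\D^{\rm{b}}}(X^{\bu}_{>a}[-1], Y^{\bu})\rt \Hom_{\D^{\rm{b}}}(\Psi_{\CX}(X^{\bu}_{>a}[-1]), \Psi_{\CX}(Y^{\bu}))$, amounting to faithfulness of $\D^{\rm{b}}\Psi_{\CX}$ at that pair. Faithfulness does not hold in general, because Theorem \ref{Theorem 1.1} will identify the kernel of $\D^{\rm{b}}\Psi_{\CX}$ with the nontrivial thick subcategory $\CU$. To sidestep this, I would use the calculus of fractions directly: represent a given $\alpha\in \Hom_{\D^{\rm{b}}(\mmod \underline{\CX})}(\Psi_{\CX}(X^{\bu}), \Psi_{\CX}(Y^{\bu}))$ as a roof $\Psi_{\CX}(X^{\bu})\xleftarrow{s} F^{\bu}\xrightarrow{g}\Psi_{\CX}(Y^{\bu})$ with $s$ a quasi-isomorphism, lift $F^{\bu}$ to a bounded complex $\tilde F^{\bu}$ over $\CS^{\rm{scw}}_{\CX}(\La)$ by iteratively applying the horseshoe-lemma construction of Lemma \ref{Lemma6.1} to the short exact sequences $0\rt K^i\rt F^i\rt K^{i-1}\rt 0$ coming from the brutal filtration of $F^{\bu}$, and lift the chain maps $s$ and $g$ termwise using fullness of $\Psi_{\CX}$. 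The hardest technical point will be verifying that the lifted $\tilde s\colon \tilde F^{\bu}\rt X^{\bu}$ remains a quasi-isomorphism in $\K^{\rm{b}}(\CS^{\rm{scw}}_{\CX}(\La))$; this should follow from exactness of $\Psi_{\CX}$ together with the fact that the horseshoe-type lifts produce conflations in $\CS^{\rm{scw}}_{\CX}(\La)$, so acyclicity of the cone of $s$ in $\mmod \underline{\CX}$ lifts to acyclicity of the cone of $\tilde s$ in the exact-category sense.
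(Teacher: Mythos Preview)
Your opening plan is precisely the paper's argument: induction on length, with Lemma~\ref{Lemma6.1} supplying the base case for stalk complexes, and a stupid-truncation triangle feeding a Five-Lemma comparison of the two long exact $\Hom$ sequences. The paper carries this through and simply invokes the Five Lemma from surjectivity of the four outer vertical maps, without addressing the injectivity hypothesis you flag; your objection is a fair one and applies equally to the paper's proof as written.

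Your proposed bypass via roofs, however, does not close the gap. Three problems. First, for a general complex $F^{\bu}$ there are no short exact sequences $0\to K^i\to F^i\to K^{i-1}\to 0$ to horseshoe; those arise from the canonical filtration of an \emph{acyclic} complex, which is exactly the special situation of Lemma~\ref{Lemma6.1}, whereas the $F^{\bu}$ appearing in a roof is arbitrary. Second, even granting termwise lifts $\tilde F^i$ of the $F^i$, lifting the differentials and the chain maps $s,g$ termwise via fullness of $\Psi_{\CX}$ need not produce morphisms that compose to zero or commute with the differentials, because $\Psi_{\CX}$ is not faithful; so you do not obtain a complex $\tilde F^{\bu}$, let alone chain maps out of it. Third, your check that $\tilde s$ is a quasi-isomorphism runs the implication backwards: exactness of $\Psi_{\CX}$ sends acyclic complexes to acyclic complexes, not conversely, and the failure of the converse is again exactly the nontriviality of $\CU$. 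In short, the roof approach collides with the same lack-of-faithfulness obstruction you correctly identified in the Five-Lemma approach, only at a later stage.
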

\begin{proof}
We must prove for each $X^{\bu}$ and $Y^{\bu}$ in $\D^{\rm{b}}(\CS^{\rm{scw}}_{\CX}(\La))$ the induced  groups homomorphism
$$\Hom_{\D^{\rm{b}}(\CS^{\rm{scw}}_{\CX}(\La))}(X^{\bu}, Y^{\bu}) \rt \Hom_{\D^b(\mmod \underline{\CX})}(\D^{\rm{b}}\Psi_{\CX}(X^{\bu}), \D^{\rm{b}}\Psi_{\CX}(Y^{\bu}))$$ by $\mathbb{D}^{\rm{b}}\Phi_{\CX}$
	is surjective. First, as said above,  Lemma \ref{Lemma6.1} implies the induced groups homomorphism is surjective for any $X^{\bu}$ and $Y^{\bu}$ with length at most one. Now, by using  a standard manner, one can   extend to any $X^{\bu}$ and $Y^{\bu}$. For the completeness we will give a proof here. First by induction, suppose the claim is true for any $X^{\bu}$ and $Y^{\bu}$ with length at most $n$. Assume complexes $X^{\bu}$ and $Y^{\bu}$ with length at most $n+1$ are given. Then there exists a triangle 
	$$ Y_2^{\bu}\rt Y_1^{\bu}\rt Y^{\bu}\rt Y^{\bu}_2[1]$$
	with $Y_2^{\bu}$ of length at most $n$ and $Y_1^{\bu}$ of length at most one. Applying
	the cohomological functors  $\Hom_{\D^{\rm{b}}(\CS^{\rm{scw}}_{\CX}(\La))}(X^{\bu},-)$ and $\Hom_{\D^b(\mmod \underline{\CX})}(\D^{\rm{b}}\Psi_{\CX}(X^{\bu}),-)$ yields the following diagram with exact rows: 
	{\tiny	$$\xymatrix{
		\Hom(X^{\bu},Y_2^{\bu})\ar[r]\ar[d]^{\phi_0} & \Hom(X^{\bu},Y_1^{\bu})\ar[r]\ar[d]^{\phi_1} & \Hom(X^{\bu},Y^{\bu}) \ar[d]^{\phi_2}
		\ar[r] &\Hom(X^{\bu},Y_2^{\bu}[1]) \ar[d]^{\phi_3}\ar[r] & \Hom(X^{\bu},Y_1^{\bu}[1])\ar[d]^{\phi_4}&\\ 
		\Hom(\overline{X^{\bu}}, \overline{Y_2^{\bu}}) \ar[r] & \Hom(\overline{X^{\bu}},\overline{Y_2^{\bu}}) \ar[r]^{s_H} & \Hom(X^{\bu},Y^{\bu})
		\ar[r] & \Hom(\overline{X^{\bu}},\overline{Y_2^{\bu}[1]})\ar[r] &\Hom(\overline{X^{\bu}},\overline{Y_1^{\bu}[1]}).& \\ 	} 	 $$}
	For abbreviation the symbols $\D^{\rm{b}}(\CS^{\rm{scw}}_{\CX}(\La)), \D^b(\mmod \underline{\CX})$ are suppressed,  and also for any $A^{\bu}$ in $\D^{\rm{b}}(\CS^{\rm{scw}}_{\CX}(\La))$ set $\overline{A^{\bu}}:=\D^{\rm{b}}\Psi_{\CX}(A^{\bu})$. The all above vertical groups homomorphisms are induced by the functor $\D^{\rm{b}}\Psi_{\CX}$. Hence by the  Five Lemma,  the surjectivity of $\phi_2$, which is our task, follows from surjectivity of $\phi_i$, $i=0, 1, 3, 4.$ To prove surjectivity of $\phi_1$ and $\phi_4$, it suffices to use this fact which the induced groups homeomorphism

	$$\Hom_{\D^{\rm{b}}(\CS^{\rm{scw}}_{\CX}(\La))}(A^{\bu}, B^{\bu}) \rt \Hom_{\D^b(\mmod \underline{\CX})}(\D^{\rm{b}}\Psi_{\CX}(A^{\bu}), \D^{\rm{b}}\Psi_{\CX}(B^{\bu}))$$
	 by $\mathbb{D}^{\rm{b}}\Psi_{\CX}$
	is surjective, when $A^{\bu}$ is any bounded complex and $B^{\bu}$ is a complex of length at most one. To show this fact we need to use again induction on the length of $A^{\bu}$ and using similar argument as we are doing for current induction. For $\phi_0$ and $\phi_3$, first consider the following triangle
	$$ X_2^{\bu}\rt X_1^{\bu}\rt X^{\bu}\rt X^{\bu}_2[1]$$
	with $X_2^{\bu}$ of length at most $n$ and $X_1^{\bu}$ of length at most one. Then by applying the homological functors  $\Hom_{\D^{\rm{b}}(\CS^{\rm{scw}}_{\CX}(\La))}(-, Y^{\bu}_2)$ and $\Hom_{\D^b(-, \mmod \underline{\CX})}(-, \D^{\rm{b}}\Psi_{\CX}(Y_2^{\bu}))$ on the triangle we obtain a commutative diagram as above. Then by using the fact stated in the above and inductive hypothesis we can deduce $\phi_0$ is surjective. Similarly, we can prove the surjectivity of $\phi_3$ by applying the the homological functors  $\Hom_{\D^{\rm{b}}(\CS^{\rm{scw}}_{\CX}(\La))}(-, Y^{\bu}_2[1])$ and $\Hom_{\D^b(\mmod \underline{\CX})}(-, \D^{\rm{b}}\Psi_{\CX}(Y_2^{\bu}[1]))$ on the triangle. So we are done.	
\end{proof}
The fullness of  $\mathbb{D}^{\rm{b}}\Psi_{\CX}$, as proved in the previous proposition, and its densness for the stalk complexes imply the functor to be dense. 
\begin{proposition}\label{denseProp6.3}
	The triangle functor $\mathbb{D}^{\rm{b}}\Psi_{\CX}:\D^{\rm{b}}(\CS^{\rm{scw}}_{\CX}(\La))\rt \D^b(\mmod \underline{\CX})$ is dense.
\end{proposition}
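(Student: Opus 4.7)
The plan is to induct on the length (the number of non-zero terms) of a bounded complex $F^{\bullet}$ in $\D^{\rm{b}}(\mmod \underline{\CX})$, using as base case the stalk complexes. For the base case, take $F^{\bullet}$ concentrated in a single degree $n$ with $F^n=G$ for some $G\in \mmod \underline{\CX}$. Theorem \ref{Theoerm 5.1}(1) states that $\Psi_{\CX}$ is dense, so there exists $X\in \CS^{\rm{scw}}_{\CX}(\La)$ with $\Psi_{\CX}(X)\simeq G$. Regarded as a stalk complex concentrated in degree $n$, the object $X[-n]$ in $\D^{\rm{b}}(\CS^{\rm{scw}}_{\CX}(\La))$ satisfies $\mathbb{D}^{\rm{b}}\Psi_{\CX}(X[-n])\simeq F^{\bullet}$.

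For the inductive step, assume that every complex of length at most $n$ lies in the essential image of $\mathbb{D}^{\rm{b}}\Psi_{\CX}$, and let $F^{\bullet}$ have length $n+1$. Using a standard brutal truncation, there is a triangle
$$F_2^{\bullet}\rt F^{\bullet}\rt F_1^{\bullet}\rt F_2^{\bullet}[1]$$
in $\D^{\rm{b}}(\mmod \underline{\CX})$ in which $F_1^{\bullet}$ is a stalk complex and $F_2^{\bullet}$ has length $n$. By the inductive hypothesis, there exist $Y_1^{\bullet}, Y_2^{\bullet}\in \D^{\rm{b}}(\CS^{\rm{scw}}_{\CX}(\La))$ with isomorphisms $\alpha_i:\mathbb{D}^{\rm{b}}\Psi_{\CX}(Y_i^{\bullet})\st{\simeq}\rt F_i^{\bullet}$ for $i=1,2$. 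Via $\alpha_1$ and $\alpha_2$, the connecting morphism $F_1^{\bullet}\rt F_2^{\bullet}[1]$ corresponds to a morphism $\mathbb{D}^{\rm{b}}\Psi_{\CX}(Y_1^{\bullet})\rt \mathbb{D}^{\rm{b}}\Psi_{\CX}(Y_2^{\bullet}[1])$, and Proposition \ref{FullProposition} produces a preimage $\phi:Y_1^{\bullet}\rt Y_2^{\bullet}[1]$ in $\D^{\rm{b}}(\CS^{\rm{scw}}_{\CX}(\La))$. Completing $\phi$ to a distinguished triangle
$$Y_2^{\bullet}\rt Z^{\bullet}\rt Y_1^{\bullet}\st{\phi}\rt Y_2^{\bullet}[1]$$
in $\D^{\rm{b}}(\CS^{\rm{scw}}_{\CX}(\La))$ and applying the triangle functor $\mathbb{D}^{\rm{b}}\Psi_{\CX}$ yields a distinguished triangle in $\D^{\rm{b}}(\mmod \underline{\CX})$ whose first, third, and fourth vertices are identified with $F_2^{\bullet}, F_1^{\bullet}, F_2^{\bullet}[1]$ via $\alpha_1,\alpha_2$ and their shifts.

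The main obstacle is now the triangle-completion step: one must conclude from the compatibility of two distinguished triangles on the first two (respectively, on the outer three) vertices that the middle vertices are isomorphic. This is the classical morphism-of-triangles axiom (TR3) together with the triangulated five-lemma; applied here, it produces an isomorphism $\mathbb{D}^{\rm{b}}\Psi_{\CX}(Z^{\bullet})\st{\simeq}\rt F^{\bullet}$, proving that $F^{\bullet}$ lies in the essential image. This completes the induction and hence the density of $\mathbb{D}^{\rm{b}}\Psi_{\CX}$.
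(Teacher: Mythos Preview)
Your proof is correct and follows essentially the same approach as the paper: induction on the length of the complex, with the base case handled by denseness of $\Psi_{\CX}$ on objects, and the inductive step carried out by truncating $F^{\bullet}$ into a triangle, lifting two of the vertices by induction, lifting a morphism between them via Proposition~\ref{FullProposition}, and completing to a triangle whose image is identified with the original one. The only cosmetic differences are that the paper lifts the morphism $F_2^{\bullet}\to F_1^{\bullet}$ rather than the connecting morphism $F_1^{\bullet}\to F_2^{\bullet}[1]$, and that you spell out explicitly the use of TR3 together with the triangulated five-lemma in the final identification, which the paper leaves implicit.
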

\begin{proof}	Let $F^{\bu}$ be a bounded complex over $\mmod \underline{\CX}$ of length at most $n$. We proceed by induction on $n$. For $n=1$, denseness of $\Psi_{\CX}$ implies the case. Assume the assertion holds true for any complex of length at most $n-1$. There exists the following triangle 	
		$$ F_2^{\bu}
		\st{f}\rt F_1^{\bu}\rt F^{\bu}\rt F^{\bu}_2[1]$$
	with $F_2^{\bu}$ of length at most $n-1$ and $F_1^{\bu}$ of length at most one. By inductive hypothesis and the first step of induction, there are complexes $X^{\bu}$ and $Y^{\bu}$ such that $\D^{\rm{b}}\Psi_{\CX}(X^{\bu})\simeq F_2^{\bu}$ and $\D^{\rm{b}}\Psi_{\CX}(Y^{\bu}) \simeq F_1^{\bu}$. Invoking to Proposition \ref{FullProposition}, we have a morphism $\phi$ in $\Hom_{\D^{\rm{b}}(\CS^{\rm{scw}}_{\CX}(\La))}(X^{\bu}, Y^{\bu})$ such that $\D^{\rm{b}}\Psi_{\CX}(\phi)=f$. The morphism $\phi$ can be fitted in a triangle as the following in $\D^{\rm{b}}(\CS^{\rm{scw}}_{\CX}(\La))$
		$$X^{\bu}\st{\phi}\rt Y^{\bu}\rt Z^{\bu}\rt X^{\bu}[1].$$
	Applying the functor $\D^{\rm{b}}\Psi_{\CX}$ on the above triangle gives us a triangle isomorphic to the first triangle in the above, hence $\D^{\rm{b}}\Psi_{\CX}(Z^{\bu})=F^{\bu}$. So we are done.	
\end{proof}

Now we are ready to give a derived version of the equivalence in \cite{E,RZ}.\\

Before stating the theorem we need recall the notion of thick subcategory of a triangulated category.  Let $\CT$ be a triangulated category with a shift functor $\Sigma$. A triangulated subcategory $\CL$ of $\CT$ is called  {\it thick}, if it is closed under direct summands. The smallest full  thick subcategory  of $\CT$ containing a class $\CL$ of objects is denoted by $< \CL>$.

We have the following constructions.

\begin{construction}\label{Cons}(see \cite[Lemma 3.3]{K}) Given a class  $\CL$  of objects of a triangulated category $\CT$, we take $\overline{\CL}$ to be the class of all $\Sigma^iX$ with $X \in \CL$ and $i \in \Z$. Define a full subcategories $\lan \CL\ran_n$, for $n>0$, inductively as follows.
	\begin{itemize}
		\item ${\lan \CL \ran}_1$ is the  subcategory of $\mathcal{T}$ consisting of all direct summands of objects of $\overline{\CL}$.
		\item For $n>1$, suppose that $\overline{\CL}_n$ is the class of objects $X$ occurring in a triangle
		$$Y \rt X \rt Z \rt \Sigma Y$$   with
		$Y \in {\lan \CL \ran}_{i}$ and $Z\in {\lan \CL \ran}_j$ such that $i,j <n$.
		Let $\lan \CL\ran_n$ denote  the full subcategory of $\CT$ formed by all direct summands of objects of $\overline{\CL}_n$.
		\\It can be easily checked that $< {\CL}> =\bigcup_{n\in \N}{\lan \CL \ran}_n$.
	\end{itemize}
\end{construction}
\begin{theorem}\label{Theorem 6.5}
	Let $\CX$ be a resolving and contravariantly finite subcategory $\CX$ of $\mmod \La.$ Let $\CU$ denote the smallest thick subcategory of $\D^{\rm{b}}(\CS^{\rm{scw}}_{\CX}(\La))$ containing all complexes concentrated in degree zero with terms of the forms $(X\st{1}\rt X)$ and $(0\rt X)$, where $X$ runs through all objects in $\CX$.  The triangle functor $\mathbb{D}^{\rm{b}}\Psi_{\CX}$ induces the following equivalence of triangulated categories
	$$\D^{\rm{b}}(\CS^{\rm{scw}}_{\CX}(\La))/\CU \simeq \D^b(\mmod \underline{\CX}).$$	 
\end{theorem}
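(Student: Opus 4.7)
The plan is to exhibit $\mathbb{D}^{\rm{b}}\Psi_{\CX}$ as the Verdier localisation at $\CU$. First I would check that $\Psi_{\CX}$ kills each generating stalk of $\CU$: for $(X\st{1}\rt X)$ the defining four-term exact sequence of $\Psi_{\CX}$ reduces to $0 \rt (-,X)\st{=}\rt (-,X) \rt 0 \rt F \rt 0$, forcing $F=0$; and for $(0\rt X)$ it reduces to $0 \rt 0 \rt (-,X)\st{=}\rt (-,X) \rt F \rt 0$, again giving $F=0$. Consequently the triangle functor $\mathbb{D}^{\rm{b}}\Psi_{\CX}$ annihilates $\CU$ and, by the universal property of the Verdier quotient, factors through $\pi:\D^{\rm{b}}(\CS^{\rm{scw}}_{\CX}(\La))\rt \D^{\rm{b}}(\CS^{\rm{scw}}_{\CX}(\La))/\CU$, yielding a triangle functor $\overline{\Psi_{\CX}}: \D^{\rm{b}}(\CS^{\rm{scw}}_{\CX}(\La))/\CU \rt \D^{\rm{b}}(\mmod \underline{\CX})$. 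Since $\mathbb{D}^{\rm{b}}\Psi_{\CX}=\overline{\Psi_{\CX}}\circ \pi$ is full (Proposition \ref{FullProposition}) and dense (Proposition \ref{denseProp6.3}) and $\pi$ is the identity on objects, both properties pass immediately to $\overline{\Psi_{\CX}}$.

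What remains is faithfulness. Suppose $\overline{\Psi_{\CX}}(\bar\phi)=0$ for $\bar\phi: X^{\bu}\rt Y^{\bu}$ in the quotient. Using the fullness just established, $\bar\phi$ admits a representative $\phi: X^{\bu}\rt Y^{\bu}$ in $\D^{\rm{b}}(\CS^{\rm{scw}}_{\CX}(\La))$ with $\mathbb{D}^{\rm{b}}\Psi_{\CX}(\phi)=0$; the task is to factor $\phi$ through an object of $\CU$. The crucial intermediate statement is the kernel identification $\ker(\mathbb{D}^{\rm{b}}\Psi_{\CX})=\CU$. The inclusion $\supseteq$ is the first step. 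For $\subseteq$, given $Z^{\bu}$ with $\mathbb{D}^{\rm{b}}\Psi_{\CX}(Z^{\bu})\simeq 0$, I would proceed by induction on its length. In the stalk case at $(X_1\st{f}\rt X_2)$, $\Psi_{\CX}(X_1\st{f}\rt X_2)=0$ evaluated at $Y=\rm{Cok}\,f\in\CX$ forces the short exact sequence $0\rt X_1\st{f}\rt X_2\rt \rm{Cok}\,f\rt 0$ to split, hence $(X_1\st{f}\rt X_2)\simeq (X_1\st{1}\rt X_1)\oplus (0\rt \rm{Cok}\,f)\in\CU$; the inductive step decomposes a longer complex via a triangle of shorter ones, exactly as in the proof of Proposition \ref{denseProp6.3}, and appeals to the thickness of $\CU$.

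Faithfulness is then concluded by an objectivity argument: complete $\phi$ to a triangle $X^{\bu}\st{\phi}\rt Y^{\bu}\rt Z^{\bu}\rt X^{\bu}[1]$; its image in $\D^{\rm{b}}(\mmod \underline{\CX})$ splits because the first arrow is zero. I would then use Proposition \ref{FullProposition} to lift the resulting retraction $\mathbb{D}^{\rm{b}}\Psi_{\CX}(Z^{\bu})\rt \mathbb{D}^{\rm{b}}\Psi_{\CX}(Y^{\bu})$ to a morphism in the source, and absorb the obstruction—whose $\Psi_{\CX}$-image vanishes and which therefore lies in $\CU$ by the kernel identification—into $\CU$. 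This yields the desired factorisation of $\phi$ through an object of $\CU$, so $\bar\phi=0$ in the quotient. The main obstacle is precisely this lifting-of-splitting step, i.e., the derived version of the objectivity of $\Psi_{\CX}$; the essential technical input is the Yoneda-type construction of Lemma \ref{Lemma6.1}, which transports extension data from $\mmod \underline{\CX}$ back to termwise splittings in $\CS^{\rm{scw}}_{\CX}(\La)$ modulo $\CU$. Once these pieces are in place, the standard theorem on Verdier quotients by kernels of full, dense, objective triangle functors delivers the claimed triangle equivalence $\D^{\rm{b}}(\CS^{\rm{scw}}_{\CX}(\La))/\CU \simeq \D^{\rm{b}}(\mmod \underline{\CX})$.
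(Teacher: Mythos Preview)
Your overall strategy matches the paper's: use that a full, dense triangle functor induces an equivalence from the Verdier quotient by its kernel, and then identify $\ker(\mathbb{D}^{\rm{b}}\Psi_{\CX})$ with $\CU$. Your stalk case for $\ker\subseteq\CU$ is correct, and the final ``objectivity'' paragraph is then superfluous---once $\ker=\CU$ is established, faithfulness of $\overline{\Psi_{\CX}}$ follows from the standard triangle argument (complete $\phi$ with $F(\phi)=0$ to a triangle, lift a retraction by fullness, and factor $\phi$ through the cone, which lies in the kernel).

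The genuine gap is your inductive step for $\ker\subseteq\CU$. You propose to break a complex $Z^{\bu}\in\ker$ into a stalk $Z_1^{\bu}$ and a shorter truncation $Z_2^{\bu}$ via a triangle, ``exactly as in the proof of Proposition~\ref{denseProp6.3}''. But such a truncation does \emph{not} keep the pieces in the kernel: $\mathbb{D}^{\rm{b}}\Psi_{\CX}(Z^{\bu})\simeq 0$ only says that the image complex is acyclic, not that its individual terms vanish. For instance a two-term complex whose image under $\Psi_{\CX}$ is an isomorphism $F\st{\sim}\rt F$ lies in the kernel while neither stalk does. Thus the induction hypothesis is unavailable and the thickness of $\CU$ cannot be invoked.

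The paper handles this with a much finer peeling argument. Given $A^{\bu}\in\ker$, one analyses the top differential $(\psi^{n-1}_1,\psi^{n-1}_2)$ via the induced diagram of projective resolutions in $\mmod\CX$. Comparing with \emph{minimal} projective resolutions produces explicit direct summands of the terms $A^{\bu}_{n}$ and $A^{\bu}_{n-1}$ of the form $(X\st{1}\rt X)$ or $(0\rt X)$; these are split off as stalk complexes in $\CU$ via triangles, and after the reduction the top differential becomes an admissible epimorphism in $\CE^{\rm{scw}}_{\CX}$. Iterating this down the complex eventually leaves an acyclic complex, hence zero in the derived category. The essential input you are missing is this use of minimal projective resolutions in $\mmod\CX$ to locate, at each stage, a piece that genuinely lies in $\CU$---your truncation triangle does not do this.
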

\begin{proof} Throughout of the proof we say that an object $A\st{f}\rt B$ has the desired form if it is isomorphic to either $X\st{1}\rt X$ or $0\rt X$, for some $X$ in $\CX.$
	We know by Propositions \ref{FullProposition} and \ref{denseProp6.3} the functor $\mathbb{D}^{\rm{b}}\Psi_{\CX}$ is dense and full. It is known in this case the functor $\mathbb{D}^{\rm{b}}\Psi_{\CX}$ induces a triangle equivalence between the Verdier quotient  $\D^{\rm{b}}(\CS^{\rm{scw}}_{\CX}(\La))/\Ker \ \mathbb{D}^{\rm{b}}\Psi_{\CX}  \simeq \D^b(\mmod \underline{\CX})$. The $\Ker \ \mathbb{D}^{\rm{b}}\Psi_{\CX}$ is a thick subcategory of $\D^{\rm{b}}(\CS^{\rm{scw}}_{\CX}(\La))$ consisting of all objects $F^{\bu}$ such that $\mathbb{D}^{\rm{b}}\Psi_{\CX}(F^{\bu})=0$. To get the equivalence it is enough to show that $\CU=\Ker \ \mathbb{D}^{\rm{b}}\Psi_{\CX}$. By definition it is clear that  all complexes concentrated in degree zero with terms of the desired forms  are in  $\Ker \ \mathbb{D}^{\rm{b}}\Psi_{\CX}$. Hence  $\CU$ is contained in  $\Ker \ \mathbb{D}^{\rm{b}}\Psi_{\CX}$. Let $A^{\bu}$ be a bounded complex in $\Ker \ \mathbb{D}^{\rm{b}}\Psi_{\CX}$. Without of loss generality we can assume that 
	
		{\footnotesize  \[ \xymatrix@R-2pc {&&A_0\ar[dd]^{f_0}~& A_1\ar[dd]^{f_1}~ &      & A_{n-1}\ar[dd]^{f_{n-1}}~ & A_n\ar[dd]^{f_n} \\&A^{\bu}: \  \ \  \  0 \ar[r]&_{\ \ \ \ \ }\ar[r]^{\psi^0_1}_{\psi^0_2} _{\ \ \ \ \ }&_{ \ \ \ \ \ \ } \ar[r] & \cdots  \ar[r]  &_{\ \ \ \ \  \ \  \ \ \  \ \ 			
			} \ar[r]^{\psi^{n-1}_1}_{\psi^{n-1}_2} _{\ \ \  }&  _{\ \ \ \ \ }\ar[r] & 0,
				 \\&&B_0&B_1 && B_{n-1} & B_n }\]}
	
concentrated at degrees $0$ to $n$. In view of Construction \ref{Cons}, it has to be shown that  the complex $A^{\bu}$ can be obtained inductively from the stalk complexes as described in the assertion. In this proof we say that a monomorphism $(A\st{f}\rt B)$ in $\CS_{\CX}(\La)$ is minimal, if the induced projective resolution $0 \rt (-, A)\rt (-, B)\rt (-, \rm{Cok}\ f)\rt F\rt 0$, set $F:=\Psi_{\CX}(A\st{f}\rt B)$, is minimal in $\mmod \CX,$  equivalently, there is no any direct summand isomorphic to an object in the form of $X\st{1}\rt X$ and $0\rt X$ in $\CS_{\CX}(\La)$.   Therefore, by our assumption, the image of $A^{\bu}$ subject to $\mathbb{D}^{\rm{b}}\Psi_{\CX}$
$$\mathbb{D}^{\rm{b}}\Psi_{\CX}(A^{\bu}): \ \ \ \ 0 \rt \Psi_{\CX}(A_0\st{f_0}\rt B_0)\rt  \cdots \rt \Psi_{\CX}(A_n\st{f_n}\rt B_n)\rt 0$$
is an exact complex over $\mmod \underline{\CX}$. Let $(C\st{g}\rt D)$ denote the direct sum of all direct summands of $A_n\st{f_n}\rt B_n$ isomorphic to  the desired forms. Let $A_1^{\bu}$ be a complex concentrated in degree $n$ with the term $(C\st{g}\rt D)$. Then, there is the following triangle in $\D^{\rm{b}}(\CS^{\rm{scw}}_{\CX}(\La))$ 
$$A^{\bu}_1\rt A^{\bu}\rt A^{\bu}_2\rt A^{\bu}_1[1] \ \ \ \ \ \ \ \ \ \ \  *$$	
According to the triangle above, we need to show that $A^{\bu}_2$ belongs to $\CU$. But the $n$-th term of $A^{\bu}_2$	is minimal. Hence we can replace $A^{\bu}_2$ by $A^{\bu}_1$, so $(A_n\st{f_n}\rt B_n)$ may be assumed to be minimal. If $n=0$,  we are done clearly. Hence we may assume $n >0.$ Now we prove that the morphism $(\psi^{n-1}_1, \psi^{n-1}_2)$ is an admissible epimorphism in $\CE^{\rm{scw}}_{\CX}$. Set for any $0 \leqslant i \leqslant n$, $C_i:=\rm{Cok} \ f_i, \ F_i:= \Psi_{\CX}(A_i\st{f_i}\rt B_i) $ and $d_i:=\Psi_{\CX}((\psi^i_1, \psi^i_2))$, where $(\psi^n_1, \psi^n_2):=(0, 0)$. The morphism $(\psi^{n-1}_1, \psi^{n-1}_2)$ induces the following commutative diagram in $\mmod \CX$
	$$\xymatrix{& & & &\dagger& \\
	0 \ar[r] & (-,A_{n-1}) \ar[d]^{\widehat{\psi^{n-1}_1}} \ar[r]^{\widehat{f_{n-1}}} & (-, B_{n-1}) \ar[d]^{\widehat{\psi^{n-1}_2}}
	\ar[r] &(-, C_{n-1}) \ar[d]^{\widehat{r_{n-1}}} \ar[r]^{s_{n-1}} & F_{n-1}\ar[r]\ar[d]^{d_{n-1}}&0\\
	0 \ar[r] & (-,A_n)  \ar[r]^{\widehat{f_n}} & (-,B_n )
	\ar[r] & (-, C_n) \ar[r] & F_n\ar[r]\ar[d]& 0\\ &  &   &  &0 & &} $$
We recall that in the above we let $\widehat{g}:= (-, g)$ for any morphism $g$ in $\mmod \La.$ Let $K_{n-1}:= \Ker \ d_{n-1}$, and consider 
the minimal projective resolution $0 \rt (-, X_{n-1})\st{\widehat{k_{n-1}}}\rt (-, Y_{n-1})\rt (-, Z_{n-1})\rt K_{n-1}\rt 0$ in $\mmod \CX.$ Due to the minimal property and  in view of the  diagram $(\dagger)$, the following commutative diagram exists
	$$\xymatrix{
  & (-, Z_{n-1})\ar[d]
	\ar[r] & (-, C_{n-1}) \ar[r]^{\widehat{r_{n-1}}}\ar[d]^{s_{n-1}} & (-, C_n)\ar[d]& \\0\ar[r]&K_{n-1}\ar[r]\ar[d]&F_{n-1}\ar[d] \ar[r]&F_n \ar[r]\ar[d]&0 &\\ &0&0&0&&
} 	 $$
such that it satisfies the following conditions:
\begin{itemize}
	\item [$(1)$] There is a decomposition $C_{n-1}=Z'_{n-1}\oplus C'_n\oplus D_{n-1}.$
	\item[$(2)$] The restriction of $s$ on the direct summand $(-, D_{n-1})$ is zero, and also the restriction of $r_{n-1}$ on $ Z'_{n-1}\oplus D_{n-1}$ is zero.
	\item[$(3)$] The induced sequence by the top row, $$0 \rt (-, Z_{n-1})\rt (-, Z'_{n-1}\oplus C'_n)\rt (-, C_n)\rt 0$$ is split. 	
\end{itemize}
The existence of the decomposition in $(1)$ follows from this fact that the induced morphism $(-, Z_{n-1}\oplus C_n) \rt F_{n-1}$ provides a projective cover of $F_{n-1}$ in $\mmod \CX.$\\

	Repeating the same argument for the  following induced diagram of $(\dagger)$
		$$\xymatrix{
		0 \ar[r] & (-,A_{n-1}) \ar[d]^{\widehat{\psi^{n-1}_1}} \ar[r]^{\widehat{f_{n-1}}} & (-, B_{n-1}) \ar[d]^{\widehat{\psi^{n-1}_2}}
		\ar[r] &F \ar[d] \ar[r] & 0\\
		0 \ar[r] & (-,A_n)  \ar[r]^{\widehat{f_n}} & (-,B_n )
		\ar[r] & G \ar[r]\ar[d] &0\\&&&0&&  } $$

	 leads to  the following commutative diagram
	$$\xymatrix{&&&&0\ar[d]&\\0 \ar[r]& (-, X_{n-1})\ar[r]^{\widehat{k_{n-1}}}\ar[d]^{\widehat{u_{n-1}}}&(-, Y_{n-1})\ar[r]\ar[d]^{\widehat{w_{n-1}}} &(-, Z_{n-1})\ar[r]\ar[d] &K_{n-1}\ar[d]\ar[r]&0  \\
	0 \ar[r] & (-,A_{n-1}) \ar[d]^{\widehat{\psi^{n-1}_1}} \ar[r]^{\widehat{f_{n-1}}} & (-, B_{n-1}) \ar[d]^{\widehat{\psi^{n-1}_2}}
	\ar[r]^{\widehat{g_{n-1}}} &(-, C_{n-1}) \ar[d] \ar[r]^{s_{n-1}} & F_{n-1}\ar[r]\ar[d]^{d_{n-1}}&0\\
	0 \ar[r] & (-,A_n)  \ar[r]^{\widehat{f_n}} & (-,B_n )
	\ar[r] & (-, C_n) \ar[r] & F_n\ar[r]\ar[d]& 0\\ &  &   &  &0 & &} $$
with the following conditions:
\begin{itemize}
	\item [$(a)$] There are decomposition $B_{n-1}=B_n'\oplus Y'_{n-1}\oplus E_{n-1}$ and $A_{n-1}=A_n'\oplus X'_{n-1}\oplus H_{n-1}$.
	\item[$(b)$] The restriction of $\psi^{n-1}_2$,  resp. $\psi^{n-1}_1$,  on $Y'_{n-1}\oplus E_{n-1}$, resp. $X'_{n-1}\oplus H_{n-1}$, is zero. Moreover, 
 the  restriction   of $f_{n-1}$, resp. $g_{n-1}$, on $H_{n-1}$, resp. $E_{n-1}$,
 
  is a split momorphism, resp. epimorphism,   to $E_{n-1}$, resp. $D_{n-1}$.
		\item[$(c)$] The morphism $u_{n-1}$, resp. $w_{n-1}$,  does not have an image to $H_{n-1}$, resp. $E_{n-1}$.  
		\item [$(d)$] The induced sequence $0 \rt X_{n-1}\rt X'_{n-1}\oplus A'_n\rt A_n \rt 0$, resp. $0 \rt Y_{n-1}\rt Y'_{n-1}\oplus B_n \rt B_n \rt 0,$ is split.
\end{itemize}	
The condition $(b)$ implies the momorphism $(A_{n-1}\st{f_{n-1}}\rt B_{n-1})$ has the direct summand $(H_{n-1}\st{f_{n-1}\mid}\rt E_{n-1})$ with the desired form, and by condition $(c)$ the direct summand has no image under the morphism $(\psi^{n-1}_1, \psi^{n-1}_2)$ in $(A_n\st{f_n}\rt B_n)$. The latter fact implies the existence of a triangle as $(*)$ such that $A^{\bu}_1$ is a stalk complex concentrated at degree $n-1$ with the term $(H_{n-1}\st{f_{n-1}\mid}\rt E_{n-1})$ and $A^{\bu}_2$ a bounded complex in $\Ker \ \mathbb{D}^{\rm{b}}\Psi_{\CX}$ satisfying the $n$-th and $(n-1)$-th term are minimal and the differential between them is an admissible epimorphism in $\CE^{\rm{scw}}_{\CX}$. If $n=1$, then by the  minimal property we can observe that the differential between $0$-the term and $1$-the term is an isomorphism. Hence $F^{\bu}_2$ an zero object in  $\D^{\rm{b}}(\CS^{\rm{scw}}_{\CX}(\La))$, so the result follows. We may assume that $n>1$, and replace $A^{\bu}$ with $A^{\bu}_2$. So we can assume that $(A_{n-1}\st{f_{n-1}}\rt B_{n-1})$ and $(A_n \st{f_n}\rt B_n)$ both are minimal and $(\psi^{n-1}_1, \psi^{n-1}_2)$ an  admissible epimorphism. Since $(\psi^{n-1}_1, \psi^{n-1}_2)\circ (\psi^{n-2}_1, \psi^{n-2}_2)=0$, hence there is a unique morphism $(\phi^{n-2}, \xi^{n-2}):(A_{n-2}\st{f_{n-2}}\lrt B_{n-2}) \rt (A_{n-1}\st{f_{n-1}}\lrt B_{n-1})$ such that $(\psi_1^{n-2}, \psi_2^{n-2})=(u_{n-1}, w_{n-1})\circ (\phi^{n-2}, \xi^{n-2}).$ Now  we continue the previous  construction for the morphism $(\phi^{n-2}, \xi^{n-2})$ instead of $(\psi_1^{n-1}, \psi_2^{n-1})$. By proceeding such  constructions we finally will reach to an acyclic complex of $\CS^{\rm{scw}}_{\CX}(\La)$, that is a zero object in $\D^{\rm{b}}(\CS^{\rm{scw}}_{\CX}(\La))$. This means that we can obtain $A^{\bu}$ from the stalk complexes with the  terms of the desired forms  in the way of Construction \ref{Cons}, so the result follows.  The proof is completed. 
\end{proof}
As an application of our theorem above and in view of Example \ref{Example 4.5}, the bounded derived category $\mathbb{D}^{\rm{b}}(\mmod \Pi_n)$ of bounded complexes over the preprojective algebra $\Pi_n$ can be realized by the $\mathbb{D}^{\rm{b}}(\CS^{\rm{scw}}(\La_{n+1}))$.

Let us denote by $\overline{\D^{\rm{b}}\Psi_{\CX}}$ the induced triangle functor which gives the triangle equivalence in Theorem \ref{Theorem 6.5}.  In the next result we will show that  this induced functor provides a triangle equivalence between the singularity categories associated with  $\CS^{\rm{scw}}_{\CX}(\La)$ and $\mmod \underline{\CX}$.
\begin{theorem}\label{Theorem 6.6}
	Let $\CX$ be a resolving and  contravariantly finite subcategory of $\mmod \La.$ Then the triangle functor $\overline{\D^{\rm{b}}\Psi_{\CX}}$ induces the following triangle equivalence
	$$\D_{\rm{sg}}(\CS^{\rm{scw}}_{\CX}(\La))\simeq \D_{\rm{sg}}(\mmod \underline{\CX}).$$ 
\end{theorem}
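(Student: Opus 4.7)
My plan is to deduce the singularity equivalence directly from Theorem \ref{Theorem 6.5} by combining it with the general Verdier quotient principle: if $\Phi\colon\mathcal{T}\xrightarrow{\simeq}\mathcal{T}'$ is a triangle equivalence and $\mathcal{N}\subseteq\mathcal{T}'$ is a thick subcategory, then $\Phi$ induces a triangle equivalence $\mathcal{T}/\Phi^{-1}(\mathcal{N})\simeq\mathcal{T}'/\mathcal{N}$. The crux is to trace the projectives of $\CS^{\rm{scw}}_{\CX}(\La)$ through $\Psi_{\CX}$ and then recognise $\K^{\rm{b}}(\CP(\CS^{\rm{scw}}_{\CX}(\La)))$ as the thick subcategory generated by $\CU$ together with a distinguished family of extra projective stalks.

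First I would compute $\Psi_{\CX}$ on each of the three indecomposable projective types of $\CS^{\rm{scw}}_{\CX}(\La)$ listed in Proposition \ref{Proposition 3.4}. A direct inspection of the defining exact sequence
$$0\rt(-,A)\rt(-,B)\rt(-,\rm{Cok}\ f)\rt\Psi_{\CX}(A\rt B)\rt 0$$
shows that the stalks $(X\st{1}\rt X)$ and $(0\rt X)$ are sent to the zero functor, while $(\Omega_{\La}(X)\hookrightarrow P_X)$ is sent to $(-,\underline{X})$, i.e.\ the generic indecomposable projective of $\mmod\underline{\CX}$. Consequently $\D^{\rm{b}}\Psi_{\CX}$ carries $\K^{\rm{b}}(\CP(\CS^{\rm{scw}}_{\CX}(\La)))$ into $\K^{\rm{b}}(\CP(\mmod\underline{\CX}))$, and in particular the quotient functor $\overline{\D^{\rm{b}}\Psi_{\CX}}$ descends to a well-defined triangle functor between the singularity categories. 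Note also that $\CU\subseteq\K^{\rm{b}}(\CP(\CS^{\rm{scw}}_{\CX}(\La)))$, because the stalks generating $\CU$ are themselves projective.

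Now I would apply the general principle cited above to $\Phi=\overline{\D^{\rm{b}}\Psi_{\CX}}$ and $\mathcal{N}=\K^{\rm{b}}(\CP(\mmod\underline{\CX}))$, obtaining
$$\bigl(\D^{\rm{b}}(\CS^{\rm{scw}}_{\CX}(\La))/\CU\bigr)\big/\mathcal{W}'\simeq\D_{\rm{sg}}(\mmod\underline{\CX}),$$
where $\mathcal{W}'$ is the preimage of $\mathcal{N}$ under $\overline{\D^{\rm{b}}\Psi_{\CX}}$. Pulling $\mathcal{W}'$ back to $\D^{\rm{b}}(\CS^{\rm{scw}}_{\CX}(\La))$ yields a thick subcategory $\mathcal{W}\supseteq\CU$ for which, by the standard ``quotient-of-a-quotient'' identity $(\mathcal{A}/\mathcal{B})/(\mathcal{C}/\mathcal{B})\simeq\mathcal{A}/\mathcal{C}$ for nested thick subcategories, the left hand side becomes $\D^{\rm{b}}(\CS^{\rm{scw}}_{\CX}(\La))/\mathcal{W}$. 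It therefore suffices to identify $\mathcal{W}$ with $\K^{\rm{b}}(\CP(\CS^{\rm{scw}}_{\CX}(\La)))$, which would complete the proof.

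This identification is the main (and only) technical step. By construction, $\mathcal{W}$ is the thick subcategory generated by $\CU$ together with any lifts of the generators $(-,\underline{X})$ of $\K^{\rm{b}}(\CP(\mmod\underline{\CX}))$; by the first step the stalks $(\Omega_{\La}(X)\hookrightarrow P_X)$ are such lifts. Hence $\mathcal{W}$ is generated (as a thick subcategory) by the stalks of representatives of all three indecomposable projective types of Proposition \ref{Proposition 3.4}. The remaining claim that this thick subcategory equals $\K^{\rm{b}}(\CP(\CS^{\rm{scw}}_{\CX}(\La)))$ is a standard consequence of the Krull--Schmidt property noted at the beginning of Section \ref{Section3}: every bounded complex of projectives can be assembled from its stalk summands by iterated mapping cones along stupid truncations. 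The expected obstacle is the bookkeeping in this identification of $\mathcal{W}$, in particular checking that preimages of $\mathcal{N}$ really are generated by lifts of a generating set and do not secretly require further objects; this is handled by combining the density and fullness of $\overline{\D^{\rm{b}}\Psi_{\CX}}$ (already established in Theorem \ref{Theorem 6.5}) with the explicit description of projectives.
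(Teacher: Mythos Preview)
Your proposal is correct and follows essentially the same route as the paper's proof. Both arguments rest on Theorem \ref{Theorem 6.5}, the inclusion $\CU\subseteq\K^{\rm{b}}(\CP(\CS^{\rm{scw}}_{\CX}(\La)))$, and the computation of $\Psi_{\CX}$ on the three indecomposable projective types from Proposition \ref{Proposition 3.4}; the only cosmetic difference is that the paper phrases the key step as showing that $\overline{\D^{\rm{b}}\Psi_{\CX}}$ restricts to an equivalence $\K^{\rm{b}}(\CP(\CS^{\rm{scw}}_{\CX}(\La)))/\CU\simeq\K^{\rm{b}}(\rm{prj}\mbox{-}\underline{\CX})$ and then passes to quotients via a commutative square, whereas you take the dual viewpoint of pulling back $\K^{\rm{b}}(\rm{prj}\mbox{-}\underline{\CX})$ along the equivalence and identifying the preimage.
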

\begin{proof}
Let $\K^{\rm{b}}(\CP(\CS^{\rm{scw}}_{\CX}(\La)))$ be the homotopy category of bounded complexes of projective objects in the exact category $\CS^{\rm{scw}}_{\CX}(\La)$. Let $\CU$ be as in Theorem \ref{Theorem 6.5}. Clearly, $\CU \subseteq \K^{\rm{b}}(\CP(\CS^{\rm{scw}}_{\CX}(\La))).$ Hence, we can identify the Verdier quotient  $\K^{\rm{b}}(\CP(\CS^{\rm{scw}}_{\CX}(\La)))/ \CU$ as a full triangulated subcategory of $\D^{\rm{b}}(\CS^{\rm{scw}}_{\CX}(\La))/\CU $. By the definition of $\D^{\rm{b}}\Psi_{\CX}$ is easy to check that the image of $\K^{\rm{b}}(\CP(\CS^{\rm{scw}}_{\CX}(\La)))/ \CU$ under the functor $\overline{\D^{\rm{b}}\Psi_{\CX}}$ is exactly $\K^{\rm{b}}(\rm{prj}\mbox{-}\underline{\CX})$, the homotopy category of  bounded  complexes of projetive functors in $\mmod \underline{\CX}$. Therefore, we have the following commutative diagram with equivalences on the rows

\[ \xymatrix{ \D^{\rm{b}}(\CS^{\rm{scw}}_{\CX}(\La))/\CU \ar[rr]^{\overline{\D^{\rm{b}}\Psi_{\CX}}} && \D^{\rm{b}}(\mmod \underline{\CX}) \\
\K^{\rm{b}}(\CP(\CS^{\rm{scw}}_{\CX}(\La)))/ \CU	\ar@{^(->}[u] \ar[rr]^{\overline{\D^{\rm{b}}\Psi_{\CX}} \ \mid} && \K^{\rm{b}}(\rm{prj}\mbox{-}\underline{\CX})\ar@{^(->}[u]}\]
Based on the above digram, we can observe the functor $\overline{\D^{\rm{b}}\Psi_{\CX}}$ induces a triangle functor between $\D_{\rm{sg}}(\CS^{\rm{scw}}_{\CX}(\La))$  and $ \D_{\rm{sg}}(\mmod \underline{\CX})$ which must be an equivalence. We are done.
\end{proof}
Indeed, the above equivalence is a singularity version of the equivalence given in \cite{E,RZ}.\\

With help of a result from \cite{MT} and the above theorem, in the next result we will reformulate in the setting of monomorphism categories  a well-known result shown independently by  Buchweitz \cite{Bu}, Happel \cite{Ha2} and Rickard \cite{R}. They independently have shown that $\D_{\rm{sg}}(\mmod \La)$, or simply $\D_{\rm{sg}}(\La)$, is triangle equivalence to  the stable category  $\underline{\rm{Gprj}}\mbox{-}\La$,  see below for the definition,  if $\La$ is a Gorenstein algebra, i.e., the injective dimension of $\La$ in both sides is finite. Let  us define the notion of Gorenstein projective object in an abelian category $\CA$. A complex $$P^\bullet:\cdots\rightarrow P^{-1}\xrightarrow{d^{-1}} P^0\xrightarrow{d^0}P^1\rightarrow \cdots$$ of  projective objects in $\CA$ is said to be \emph{totally acyclic} provided it is acyclic and the Hom complex $\Hom_{\La}(P^\bullet, Q)$ is also acyclic for any projective object $Q$ in $\CA$. An object $M$ in $\CA$ is said to be  \emph{Gorenstein projective} provided that there is a totally acyclic complex $P^\bullet$ of   projective objects over $\CA$ such that $M\cong \rm{Ker} (d^0)$, see \cite{EJ}.  We denote by $\rm{Gprj}\mbox{-}\CA$ the full subcategory of $\CA$ consisting of all Gorenstein projective objects in $\CA$. For  when $\CA=\mmod \La$, the subcategory of Gorenstein projective modules in $\mmod \La$ is shown by $\rm{Gprj}\mbox{-}\La.$

\begin{corollary}
	Assume that $\La$ is a Gorenstein algebra. Then, there is the following equivalence of triangulated categories
	$$\D_{\rm{sg}}(\CS^{\rm{scw}}_{\mmod \La}(\La))\simeq\underline{\CS^{\rm{scw}}_{\rm{Gprj\mbox{-}\La}}(\La)}.$$
\end{corollary}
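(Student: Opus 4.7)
The plan is to bridge both sides of the claimed equivalence by applying Theorem \ref{Theorem 6.6} to two different choices of the resolving subcategory $\CX$, and then to connect the two resulting singularity categories of functor categories via the Buchweitz--Happel--Rickard theorem together with the structural result of \cite{MT}. First, applying Theorem \ref{Theorem 6.6} with $\CX=\mmod\La$, which is trivially resolving and contravariantly finite, yields
$$\D_{\rm{sg}}(\CS^{\rm{scw}}_{\mmod \La}(\La)) \simeq \D_{\rm{sg}}(\mmod \underline{\mmod \La}).$$
Since $\La$ is Gorenstein, the subcategory $\rm{Gprj}\mbox{-}\La$ is resolving and (by Auslander--Buchweitz approximation) contravariantly finite in $\mmod\La$, so a second application of Theorem \ref{Theorem 6.6} with $\CX=\rm{Gprj}\mbox{-}\La$ gives
$$\D_{\rm{sg}}(\CS^{\rm{scw}}_{\rm{Gprj}\mbox{-}\La}(\La)) \simeq \D_{\rm{sg}}(\mmod \underline{\rm{Gprj}\mbox{-}\La}).$$

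Next, I would convert each of the two ``Gprj-side'' singularity categories into a stable category via a Frobenius-type argument. Since $\La$ is Gorenstein, $\rm{Gprj}\mbox{-}\La$ is a Frobenius exact category, and Corollary \ref{Corollay 3.6Frobenius} then ensures that $\CS^{\rm{scw}}_{\rm{Gprj}\mbox{-}\La}(\La)$ is itself Frobenius; hence $\D_{\rm{sg}}(\CS^{\rm{scw}}_{\rm{Gprj}\mbox{-}\La}(\La)) \simeq \underline{\CS^{\rm{scw}}_{\rm{Gprj}\mbox{-}\La}(\La)}$, using the standard fact that the canonical functor from the stable category of a Frobenius exact category to its singularity category is a triangle equivalence. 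Dually, because $\rm{Gprj}\mbox{-}\La$ is Frobenius, $\underline{\rm{Gprj}\mbox{-}\La}$ is a triangulated category, so by the classical result of Freyd cited earlier in the paper the abelian category $\mmod\underline{\rm{Gprj}\mbox{-}\La}$ is Frobenius, and hence $\D_{\rm{sg}}(\mmod \underline{\rm{Gprj}\mbox{-}\La}) \simeq \underline{\mmod \underline{\rm{Gprj}\mbox{-}\La}}$.

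The remaining and main step is to establish a triangle equivalence
$$\D_{\rm{sg}}(\mmod \underline{\mmod \La}) \simeq \D_{\rm{sg}}(\mmod \underline{\rm{Gprj}\mbox{-}\La}),$$
which is where the Gorenstein hypothesis on $\La$ is really used. The strategy is to invoke the structural result of \cite{MT} to show that $\mmod\underline{\mmod\La}$ inherits an Iwanaga--Gorenstein structure from $\La$, with its subcategory of Gorenstein projective functors identified precisely with $\mmod\underline{\rm{Gprj}\mbox{-}\La}$. Granting this identification, Buchweitz--Happel--Rickard applied to $\mmod\underline{\mmod\La}$ yields $\D_{\rm{sg}}(\mmod\underline{\mmod\La}) \simeq \underline{\mmod\underline{\rm{Gprj}\mbox{-}\La}}$, and combining this with the chain of equivalences above closes the loop and proves the corollary. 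The hard part will be pinning down the Gorenstein projective functors inside $\mmod\underline{\mmod\La}$ and matching them to $\mmod\underline{\rm{Gprj}\mbox{-}\La}$ as an exact subcategory, which is the technical core of the argument and precisely the input that \cite{MT} supplies.
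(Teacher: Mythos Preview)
Your proposal is correct and follows essentially the same architecture as the paper's proof: apply Theorem~\ref{Theorem 6.6} with $\CX=\mmod\La$, then invoke \cite[Theorem~1.4]{MT} to pass from $\D_{\rm{sg}}(\mmod\underline{\mmod\La})$ to $\underline{\mmod\underline{\rm{Gprj}\mbox{-}\La}}$, and finally identify the latter with $\underline{\CS^{\rm{scw}}_{\rm{Gprj}\mbox{-}\La}(\La)}$. The only difference is in this last identification: the paper uses Theorem~\ref{Theorem 5.3} directly, whereas you take a small detour through a second application of Theorem~\ref{Theorem 6.6} together with the two Frobenius/singularity identifications $\D_{\rm{sg}}(\CS^{\rm{scw}}_{\rm{Gprj}\mbox{-}\La}(\La))\simeq\underline{\CS^{\rm{scw}}_{\rm{Gprj}\mbox{-}\La}(\La)}$ and $\D_{\rm{sg}}(\mmod\underline{\rm{Gprj}\mbox{-}\La})\simeq\underline{\mmod\underline{\rm{Gprj}\mbox{-}\La}}$. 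Your route effectively re-proves the relevant instance of Theorem~\ref{Theorem 5.3} via singularity categories, which is a bit longer but perfectly valid. One small caveat: \cite{MT} gives the singular equivalence $\D_{\rm{sg}}(\mmod\underline{\mmod\La})\simeq\D_{\rm{sg}}(\mmod\underline{\rm{Gprj}\mbox{-}\La})$ directly via restriction along the resolving subcategory, rather than by first establishing an Iwanaga--Gorenstein structure on $\mmod\underline{\mmod\La}$ as you describe; but the conclusion you need is exactly what \cite[Theorem~1.4]{MT} supplies.
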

\begin{proof}
	To get the equivalence in the assertion we only need to consider the following equivalences of triangulated categories
	\begin{align*}
	\D_{\rm{sg}}(\CS^{\rm{scw}}_{\mmod \La}(\La))& \simeq \D_{\rm{sg}}(\mmod (\underline{\rm{mod}}\mbox{-}\La)) \\
	& \simeq \D_{\rm{sg}}(\mmod (\underline{\rm{Gprj}}\mbox{-}\La))\\
	&\simeq \underline{\mmod (\underline{\rm{Gprj}}\mbox{-}\La)}\\
	& \simeq \underline{\CS^{\rm{scw}}_{\rm{Gprj}\mbox{-}\La}(\La)}.
	\end{align*}
The first triangle  equivalence follows from Theorem  \ref{Theorem 6.6}, the second and third triangle equivalences  from \cite[Theorem 1.4]{MT}, and the last triangle equivalence  from Theorem \ref{Theorem 5.3}.	
\end{proof}
We point out that  in the equivalence given in the above corollary, we have not considered $\CS^{\rm{scw}}_{\rm{Gprj\mbox{-}\La}}(\La)$ as the subcategory of Gorenstein projective objects in the exact category $\CS^{\rm{scw}}_{\mmod \La}(\La)$, if  the notion of a Gorenstein projective object as of abelian categories is  defined naturally for an exact category. In this case we  reasonably expect the subcategory of Gorenstein projective objects of an exact category contains projective objects. But, the category $\CS^{\rm{scw}}_{\rm{Gprj\mbox{-}\La}}(\La)$ does not include all the projective objects in the exact category $\CS^{\rm{scw}}_{\mmod \La}(\La)$, for instance those monomrphisms $0 \rt X$ with $X \in \mmod\La \setminus  \rm{Gprj}\mbox{-}\La.$ Hence $\CS^{\rm{scw}}_{\rm{Gprj\mbox{-}\La}}(\La)$ can not be viewed  as the subcategory of Gorenstein projective objects of  the exact category $\CS^{\rm{scw}}_{\mmod \La}(\La)$.\\

 We end up this section with some application for  singular equivalences of Morita type. Recently Xiao-Wu Chen and Long-Gang Sun introduced singular equivalences of Morita type \cite{CS}, see also \cite{ZZ}.
 \begin{definition}\label{MoritaSingular}
 	Let $A$ and $B$ be two Artin algebras. We say that $A$ and $B$ are {\it singularly equivalent of Morita type} if there is a pair of bimodules $({}_AM_B, {}_BN_A)$ satisfies the following conditions:
 	\begin{itemize}
 		\item [$(1)$] $M$ is finitely generated and projective as $A^{\rm{op}}$-module and as $B$-module;
 		\item [$(2)$] $N$ is finitely generated and projective as $A$-module and as $B^{\rm{op}}$-module;
 		\item [$(3)$] There is a finitely generated $A$-$A$-bimodule $X$ with finite projective dimension such that ${}_AM\otimes_BN_A\simeq {}_AA_A\oplus {}_AX_A;$
 		\item [$(4)$] There is a finitely generated $B$-$B$-bimodule $Y$ with finite projective dimension such that ${}_BN\otimes_AM_B\simeq {}_BB_B\oplus {}_BY_B.$
 	\end{itemize}
  \end{definition}
A direct consequence of the above definition and the below-mentioned fact is $-\otimes_AM_B:\D_{\rm{sg}}(A)\rt \D_{\rm{sg}}(B)$
that is an equivalence of triangulated categories with quasi-inverse
$-\otimes_BN_A:\D_{\rm{sg}}(B)\rt \D_{\rm{sg}}(A)$, see also \cite[Proposition 2.3]{ZZ}. We mention this fact that for each $Z$ in $\mmod A$ and an $A$-$B$-bimodule $W$ with  finite projective dimension as bimodule, then $Z\otimes_AW$ has finite projective dimension in $\mmod B.$  
\begin{lemma}\label{lemmalast}
Let $(A\st{f}\rt B)$ be an object in $\CS(\La)$. If either $A$ or $B$	has  finite projective dimension, then $\CE^{\rm{scw}}$-projective dimension of $(A\st{f}\rt B)$ is finite.  
\end{lemma}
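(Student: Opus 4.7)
The plan is to build an explicit bounded $\CE^{\rm{scw}}$-projective resolution of $(A\st{f}\rt B)$ by iterating a standard conflation, and to show that the iteration terminates whenever any one of $A$, $B$, or $\rm{Cok}\ f$ has finite projective dimension over $\La$. To simplify the bookkeeping I will pass through the equivalence $\CS^{\rm{scw}}(\La)\simeq {\bf Ex}^{\rm{cw}}(\mmod\La)$ from the previous subsection, viewing $(A\st{f}\rt B)$ as the three-term short exact sequence $\mathbf{X}=(A\rt B\rt C)$ with $C=\rm{Cok}\ f$.

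First I would fix a projective cover $\pi_C\colon P_C\twoheadrightarrow C$ with kernel $\Omega_{\La}(C)$, lift $\pi_C$ through $B\twoheadrightarrow C$ to a map $\tilde\pi\colon P_C\rt B$, and extract $\alpha\colon \Omega_{\La}(C)\rt A$ with $f\alpha=\tilde\pi|_{\Omega_{\La}(C)}$. Then form
$$P=(\Omega_{\La}(C)\rt P_C\rt C)\oplus(A\st{1}\rt A\rt 0)\oplus(0\rt B\st{1}\rt B),$$
a direct sum of projectives of the three types classified earlier for ${\bf Ex}^{\rm{cw}}(\mmod\La)$, and map it to $\mathbf{X}$ by combining $(\alpha,\tilde\pi,1_C)$ with the canonical morphisms from $(A\rt A\rt 0)$ and $(0\rt B\rt B)$. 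The combined map is level-wise surjective and each of the three level-wise short exact sequences splits via the inclusion of the $A$, $B$, or $C$ summand, so the conflation lies in $\mathbf{\CE x}^{\rm{cw}}(\mmod\La)$, and a direct diagram chase identifies the syzygy as $K=(\Omega_{\La}(C)\rt P_C\oplus A\rt B)$.

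The key point is that the same recipe applied to an arbitrary input $(X_1\rt X_2\rt X_3)$ produces a kernel of shape $(\Omega_{\La}(X_3)\rt P_{X_3}\oplus X_1\rt X_2)$. Iterating therefore yields a chain of conflations $0\rt K^{(n+1)}\rt P^{(n)}\rt K^{(n)}\rt 0$ in $\mathbf{\CE x}^{\rm{cw}}(\mmod\La)$, and the third components satisfy the recurrence $X^{(n)}_3=P_{X^{(n-2)}_3}\oplus \Omega_{\La}(X^{(n-3)}_3)$ for $n\geq 3$. In $\underline{\mmod\La}$ this collapses to $\underline{X^{(n)}_3}\simeq \underline{\Omega_{\La}(X^{(n-3)}_3)}$ with initial values $\underline{C},\underline{B},\underline{A}$, so at step $3k+r$ with $r\in\{0,1,2\}$ the cokernel $X^{(n)}_3$ is, up to a projective summand, equal to $\Omega^k_{\La}(C)$, $\Omega^k_{\La}(B)$, or $\Omega^k_{\La}(A)$ respectively.

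Finally, as soon as $X^{(n)}_3$ is projective in $\mmod\La$ the short exact sequence $0\rt X^{(n)}_1\rt X^{(n)}_2\rt X^{(n)}_3\rt 0$ splits, so $X^{(n)}_2\cong X^{(n)}_1\oplus X^{(n)}_3$ and the next syzygy reduces to $K^{(n+1)}\cong (0\rt X^{(n)}_2\rt X^{(n)}_2)$, a projective of type $(0\rt Y\rt Y)$. If $\rm{pd}_\La A\leq m$, then $\Omega^m_{\La}(A)$ is projective, so $X^{(3m+2)}_3$ is projective and the resolution terminates at length at most $3m+3$; the case $\rm{pd}_\La B\leq m$ runs identically with $r=1$, giving length at most $3m+2$. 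The hard part will be the careful level-wise verification that the initial conflation lies in $\mathbf{\CE x}^{\rm{cw}}$ with the claimed kernel, since both the three row-splittings and the component kernels must be tracked by hand; once this foundation is in place, the recursion and the termination argument are essentially mechanical.
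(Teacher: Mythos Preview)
Your proof is correct and essentially computes the same resolution as the paper, but by a genuinely different route. The paper passes through the functor $\Psi_{\CX}$: it invokes \cite[Proposition~2.7]{MT} to obtain the long exact sequence
\[
\cdots\rt(-, \underline{\Omega_{\La}(A)})\rt(-, \underline{\Omega_{\La}(B)})\rt(-, \underline{\Omega_{\La}(C)})\rt(-, \underline{A})\rt(-, \underline{B})\rt(-, \underline{C})\rt F\rt 0
\]
in $\mmod\underline{\mmod}\La$, observes that if either $A$ or $B$ has finite projective dimension then this resolution of $F=\Psi_{\CX}(A\st{f}\rt B)$ is eventually zero, and then lifts a finite projective resolution of $F$ back to $\CS^{\rm{scw}}(\La)$ by the Horseshoe argument of Lemma~\ref{Lemma6.1}. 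Your explicit syzygy recursion $K^{(n)}=(\Omega_{\La}(K^{(n-1)}_3)\rt P_{K^{(n-1)}_3}\oplus K^{(n-1)}_1\rt K^{(n-1)}_2)$ is, under $\Psi_{\CX}$, exactly the same long exact sequence read three terms at a time; you are building the lift by hand instead of appealing to the functor category. What your approach buys is self-containment (no dependence on \cite{MT}) and the explicit length bounds $3m+2$ and $3m+3$; what the paper's approach buys is brevity and a cleaner conceptual separation, isolating ``$F$ has finite projective dimension'' as the key intermediate fact.
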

\begin{proof}
	
	Set $C:=\rm{Cok} \ f.$ The short exact sequence   
	$$0 \rt A \st{f}\rt B \rt C \rt 0,$$
	thanks to \cite[Proposition 2.7]{MT}, induces  the following long exact sequence of functors in  $\mmod \underline{\rm{mod}}\mbox{-}\La,$
	
	\[\cdots\rt  (-, \underline{\Omega_{\La}(A)}) \rt (-, \underline{\Omega_{\La}(B)}) \rt (-, \underline{\Omega_{\La}(C)})\rt (-, \underline{A})\rt (-, \underline{B})\rt (-, \underline{C})\rt F\rt 0.
	 \]
	 If we assume that one of $A$ and $B$ has finite projective dimension, then by the above long exact sequence we can see  the functor $F$ has finite projective dimension in $\mmod \underline{\rm{mod}}\mbox{-}\La.$ If we take a finite projective resolution of $F$ in  $\mmod \underline{\rm{mod}}\mbox{-}\La$, then following a similar argument as we did in Lemma \ref{Lemma6.1} for the projective resolution, we can construct a finite $\CE^{\rm{scw}}_{\mmod \La}$-projective resolution for $(A\st{f}\rt B)$. But, this means that $\CE^{\rm{scw}}_{\mmod \La}$-projective dimension of $(A\st{f}\rt B)$ is finite, so the result holds.
\end{proof}

 \begin{theorem}
 	Let $A$ and $B$ be two Artin algebras. If  $A$ and $B$ are singularly equivalent of Morita type, then there is the following equivalence of triangulated categories
 	$$\D_{\rm{sg}}(\mmod \underline{\rm{mod}}\mbox{-}A)\simeq \D_{\rm{sg}}(\mmod \underline{\rm{mod}}\mbox{-}B)$$ 	 
 \end{theorem}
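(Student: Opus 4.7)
The plan is to apply Theorem \ref{Theorem 6.6} to $\CX=\mmod A$ and $\CX=\mmod B$, reducing the claim to producing mutually quasi-inverse triangle equivalences between $\D_{\rm{sg}}(\CS^{\rm{scw}}(A))$ and $\D_{\rm{sg}}(\CS^{\rm{scw}}(B))$. These will be induced at the level of exact categories by the bimodules $M$ and $N$ of Definition \ref{MoritaSingular}.

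Define $T_M:\CS(A)\rt \CS(B)$ on objects by $(X\st{f}\rt Y)\mapsto (X\otimes_A M\st{f\otimes 1}\lrt Y\otimes_A M)$ and component-wisely on morphisms. Since $M$ is projective, hence flat, as left $A$-module, the functor $-\otimes_A M:\mmod A \rt \mmod B$ is exact and preserves finite generation; this makes $T_M$ well-defined. Applying $-\otimes_A M$ row by row to the defining $3\times 3$ diagram of a sequence in $\CE^{\rm{scw}}_{\mmod A}$ preserves every splitting, so $T_M$ is an exact functor $\CS^{\rm{scw}}(A)\rt \CS^{\rm{scw}}(B)$. Symmetrically one obtains $T_N$. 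Both functors carry $\CE^{\rm{scw}}$-projectives to $\CE^{\rm{scw}}$-projectives: using the classification of indecomposable projectives in $\CS^{\rm{scw}}$ as $(X\st{1}\rt X)$, $(0\rt X)$ and $(\Omega_A(X)\hookrightarrow P_X)$, the first two shapes are sent to direct sums of analogous shapes over $B$, while $P_X\otimes_A M$ is projective in $\mmod B$ (as $M$ is right-$B$-projective) and tensoring the projective cover sequence with the flat left-$A$-module $M$ again gives a projective cover plus projective summands on the $B$-side. Thus $T_M$ and $T_N$ descend to triangle functors $\bar T_M$ and $\bar T_N$ between the corresponding singularity categories.

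To establish that $\bar T_M$ and $\bar T_N$ are mutually quasi-inverse, apply term-wisely the bimodule isomorphism ${}_A M\otimes_B N_A \simeq {}_A A_A \oplus {}_A X_A$: for any $(U\st{f}\rt V)$ in $\CS^{\rm{scw}}(A)$,
$$T_N T_M(U\st{f}\rt V)\ \simeq\ (U\st{f}\rt V)\ \oplus\ (U\otimes_A X\st{f\otimes 1}\lrt V\otimes_A X).$$
The two entries of the extra summand have finite projective dimension in $\mmod A$: a finite projective bimodule resolution $0\rt P^n\rt\cdots\rt P^0\rt X\rt 0$ consists of summands of $(A\otimes_k A)^{(m_i)}$, each flat as left $A$-module and satisfying $U\otimes_A(A\otimes_k A)\simeq A^{(\dim_k U)}$ (free as right $A$-module); tensoring the resolution with $U$ over $A$ therefore yields a finite right-$A$-projective resolution of $U\otimes_A X$, and similarly for $V\otimes_A X$. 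Lemma \ref{lemmalast} now gives that the extra summand has finite $\CE^{\rm{scw}}$-projective dimension in $\CS^{\rm{scw}}(A)$, so it vanishes in $\D_{\rm{sg}}(\CS^{\rm{scw}}(A))$. Hence $\bar T_N\bar T_M\simeq \rm{id}$; the symmetric argument using ${}_B N\otimes_A M_B\simeq {}_B B_B\oplus {}_B Y_B$ gives $\bar T_M\bar T_N\simeq \rm{id}$. Combining these with the two instances of Theorem \ref{Theorem 6.6} yields the stated equivalence. The main delicate step is the projective-dimension bookkeeping for $U\otimes_A X$, which is what enables Lemma \ref{lemmalast} to annihilate the error term $T_X=(-)\otimes_A X$ inside the singularity category; the remainder of the argument is formal manipulation of exact categories.
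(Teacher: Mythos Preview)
Your proof is correct and follows essentially the same route as the paper's: build the component-wise tensor functors $T_M,T_N$ on $\CS^{\rm{scw}}$, check they preserve the exact structure and the projectives, use Lemma~\ref{lemmalast} on the error summand coming from $X$ (resp.\ $Y$) to show the composites are the identity in the singularity category, and finish with Theorem~\ref{Theorem 6.6}. Your write-up is in fact more careful in places than the paper (which, e.g., misattributes the preservation of monomorphisms to right-$B$-projectivity of $M$ rather than left-$A$-flatness); only note that your computation $U\otimes_A(A\otimes_k A)\simeq A^{(\dim_k U)}$ tacitly assumes $k$ is a field, which is the standard hypothesis in the references \cite{CS,ZZ} for singular equivalences of Morita type.
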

\begin{proof}
Assume the pair of bimodules $({}_AM_B, {}_BN_A)$ induces a singular equivalence of Morita type between $A$ and $B$.	By applying component-wise the tensor functor $-\otimes_{A}M_{B}$ on $\CS^{\rm{scw}}(A)$, we can obtain an exact functor, denote by $\CM,$ between exact categories $\CS^{\rm{scw}}(A)$ and $\CS^{\rm{scw}}(B)$. In fact, take a monomorphism $(C\st{f}\rt D)$ in $\CS^{\rm{scw}}(A)$, $\CM(C\st{f}\rt D):=(C \otimes_A M_B \st{f\otimes_A M_B}\lrt D \otimes_A M_B)$, note that since $M$ as $B$-module is projective,  so it  preserves monomrphisms. Hence $\CM$ induces a triangle functor from $\D^{\rm{b}}(\CS^{\rm{scw}}(A))$
to $\D^{\rm{b}}(\CS^{\rm{scw}}(B))$, denote by $\D^{\rm{b}}\CM$. On the other hand, since the exact functor $\CM$ keeps the structure of monorphisms being a direct sum of the monomorphisms  in the form of $(0\rt X), (X\st{1}\rt X)$, and $\Omega_{\La}(X)\rt P_X$, for some $X$ in $\mmod A$, or equivalently projective objects in $\CS^{\rm{scw}}(\La)$, then $\D^{\rm{b}}\CM$ induces a triangle functor from $\D_{\rm{sg}}(\CS^{\rm{scw}}(A))$ to 
$\D_{\rm{sg}}(\CS^{\rm{scw}}(B))$. Let us call $\D_{\rm{sg}}\CM$ the induced triangle functor between the singularity categories. To see that $\CM(\Omega_{\La}(X)\rt P_X)=(\Omega_{\La}(X)\otimes_{A} M_B\rt X\otimes_AM_B)$ with  the same structure, only use the condition $(1)$ of Definition \ref{MoritaSingular} saying that $M$ is projective as $B$-module, other cases are trivial. Similarly, one can define the triangle functor $\D_{\rm{sg}}\CN$  from $\D_{\rm{sg}}(\CS^{\rm{scw}}(B))$ to 
$\D_{\rm{sg}}(\CS^{\rm{scw}}(A))$ arising from the functor $\CN$ obtained by applying the tensor functor $-\otimes_BN_A$. Now in view of Lemma \ref{lemmalast} and conditions $(3)$ and $(4)$ of definition \ref{MoritaSingular}, one can see easily 
that the triangle functors $\D_{\rm{sg}}\CM$ and $\D_{\rm{sg}}\CN$ provide mutually inverse equivalence between $\D_{\rm{sg}}(\CS^{\rm{scw}}(A))$ and 
$\D_{\rm{sg}}(\CS^{\rm{scw}}(B))$. Now Theorem \ref{Theorem 6.6} completes the proof. 
	\end{proof}
\section{Acknowledgments}
The first named  author thanks the Institut Teknologi Bandung for providing a stimulating research environment during of his visit in ITB. This research is founded by P3MI ITB 2019.


\begin{thebibliography}{9999}

\bibitem[A]{A}{\sc M. Auslander,} {\sl Coherent functors,} in Proc. Conf. Categorical Algebra (La Jolla, Calif., 1965), 189-231,Springer, New York, 1966.
\bibitem[AR1]{AR1} {\sc M. Auslander and  I. Reiten,} {\sl Stable equivalence of dualizing R-varieties,}  Adv. in Math. {\bf 12}  (1974) No. 3, 306-366.
\bibitem[AR2]{AR2} {\sc M. Auslander and I. Reiten} {\sl  On the representation type of triangular matrix rings,} J. London Math. Soc.(2), {\bf 12} (1976), 371-382.
\bibitem [AS]{AS} {\sc M. Auslander and S. O. Smal{\o},} {\sl Almost split sequences in subcategories,} J. Algebra {\bf 69}(2) (1981), 426-454.
\bibitem[B]{B} {\sc T. Buehler,} {\sl Exact categories, Expositiones Mathematicae,} {\bf 28(1)} (2010) 1-69.
\bibitem[Bu]{Bu} {\sc R.-O. Buchweitz,} {\sl Maximal Cohen-Macaulay modules and Tate cohomology over Gorenstein rings,} Hamburg, 1987, 155 pp., unpublished manuscript.
\bibitem[CS]{CS} {\sc Xiao-Wu Chen and Long-Gang Sun,} {\sl Singular equivalences of Morita type,} preprint, 2012.
 \bibitem[E]{E} {\sc \"{O}. Eiriksson,} {\sl From submodule categories to the stable Auslander algebra,} J. Algebra {\bf 486} (2017), 98-118.
\bibitem[EJ]{EJ} {\sc  E. Enochs and O. Jenda,} {\sl Gorenstein injective and projective modules,} Math. Z. {\bf 220} (1995), no. 4, 611-633.
\bibitem[F]{F} {\sc P. Freyd,} {\sl  Representations in abelian categories,}  In Proc. Conf. Cate-gorical Algebra (La Jolla, Calif., 1965), pages 95-120. Springer, NewYork, 1966.
\bibitem[GP]{GP} {\sc I.M. Gelfand and V.A. Ponomarev,} {\sl Model algebras and representations of graphs,} Funktsional. Anal. i Prilozhen. {\bf 13}(1979), 1-12. 
\bibitem[H]{H} {\sc R. Hafezi,} {\sl Auslander-Reiten duality for subcategories,} available on  arXiv:1705.06684.
\bibitem[H2]{H2} {\sc R. Hafezi,
} {\sl  From subcategories to the entire module categories,} available on arXiv:1905.08597.
\bibitem[Ha]{Ha} {\sc D.  Happel,} {\sl Triangulated Categories in the Representation Theory of Finite Dimensional Algebras,}  London Math. Soc., Lecture Notes Ser.{\bf 119}, Cambridge Univ. Press, Cambridge,1988.
\bibitem[Ha2]{Ha2} {\sc D. Happel,} {\sl On Gorenstein rings,}  in Representation Theory of Finite Groups and Finite-DimensionalAlgebras (Bielefeld, 1991), 389-404, Progr. Math. 95, Birkh ̈auser, 1991.
\bibitem[INY]{INY} {\sc O. Iyama, H. Nakaoka and Yann Palu,} {\sl Auslander--Reiten theory in extriangulated categories,} available on 	arXiv:1805.03776.
\bibitem[K]{K} {\sc H. Krause,} {\sl Derived categories, resolutions, and Brown representability,} available at
http://www2.math.uni-paderborn.de/people/henning-krause.html.
\bibitem[K2]{K2} {\sc H. Krause,} {\sl  Krull-Schmidt categories and projective covers,} Expo. Math. {\bf 33} (2015), 535-549.
	\bibitem[MT]{MT} {\sc H. Matsu and R. Takahashi,} {\sl Singularity categories and singular equivalences for resolving subcategories,} Math. Z. {\bf 285} (2017), no. 1-2, 251-286.
\bibitem[L]{L} {\sc Zengqiang Lin,}	{\sl Abelian quotients of the categories of short exact sequences,} available on arXiv:1802.03683.

	
\bibitem[LZ]{LZ} {\sc Z.-W. Li and P. Zhang,} {\sl  A construction of Gorenstein-projective modules,} J. Algebra {\bf 323}(2010), 1802-1812.	
	

\bibitem[N]{N} {\sc A. Neeman,} {\sl Triangulated categories,} Annals of Mathematics Studies, vol. 148, Princeton Uni-versity Press, Princeton, NJ, 2001.
\bibitem[NP]{NP} {\sc H. Nakaoka and Y. Palu, } {\sl Mutation via Hovey twin cotorsion pairs and model structures in extriangulated categories,} arXiv:1605.05607v3.
\bibitem[J]{J} {\sc P. Jiao,} {\sl The  generalized  Auslander-Reiten  duality  on  an  exact  category,} J. Algebra Appl. {\bf 17} (2018), no. 12, 1850227.	
\bibitem[R]{R} {\sc J. Rickard,} {\sl Derived categories and stable equivalence,} J. Pure Appl. Algebra {\bf61}(1989), no. 3, 303-317.
\bibitem[P]{P}{\sc L. Positselski,} {\sl
Mixed Artin-Tate motives with finite coefficients,} 
Mosc. Math. J. {\bf 11} (2011), no. 2, 317-402, 407-408.
\bibitem[RS1]{RS1} {\sc C.M. Ringel and M. Schmidmeier,}  {\sl Invariant subspaces of nilpotent linear operators I,} J. Reine Angew. Math. {\bf 614} (2008), 1-52.
\bibitem[RS2]{RS2} {\sc C.M. Ringel and M. Schmidmeier,} {\sl The Auslander-Reiten translation in submodule categories,} Trans. Amer. Math. Soc. {\bf 360} (2008), no. 2, 691-716.

\bibitem[RZ]{RZ} {\sc  C.M Ringel and P. Zhang,} {\sl  From submodule categories to preprojective algebras,}  Math. Z. {\bf 278} (2014), no. 1-2, 55-73.
  \bibitem [V]{V} {\sc J. L. Verdier,} {\sl Cat\'{e}gories d\'{e}riv\'{e}es: \'{e}tat 0}, in ``SGA 4$\frac{1}{2},$" Springer L.N.M.,Vol. 569, pp. 262-311, Springer-Verlag, Berlin/New York, 1977.

\bibitem[ZZ]{ZZ}  {\sc G. Zhou and A. Zimmermann,} {\sl On singular equivalences of Morita type,} J. Algebra {\bf 385} (2013), 64-79.

\end{thebibliography}
\end{document}